\documentclass[12pt]{amsart}
\usepackage{geometry}

\usepackage[utf8]{inputenc}
\usepackage[T1]{fontenc}
\usepackage{amsmath, amssymb, amsthm}
\usepackage{amsfonts}
\usepackage{mathtools}
\usepackage{graphicx}
\usepackage{xcolor}
\usepackage{enumerate}
\usepackage{hyperref, thmtools}
\usepackage{verbatim}
\usepackage{ulem}% strikethrough text using sout
\usepackage{cancel}
\usepackage{tikz, tikz-cd}
\usepackage{ytableau}
\usepackage{cite}

\definecolor{myblue}{RGB}{0,100,200}
\definecolor{myred}{RGB}{180,0,50}

\theoremstyle{plain}
\newtheorem{theorem}{Theorem}[section]
\newtheorem{proposition}[theorem]{Proposition}
\newtheorem{lemma}[theorem]{Lemma}
\newtheorem{corollary}[theorem]{Corollary}

\theoremstyle{definition}

\newtheorem{example}[theorem]{Example}
\newtheorem{mainthm}{Theorem}

\theoremstyle{remark}
\newtheorem{remark}[theorem]{Remark}

\numberwithin{equation}{section}

\newcommand{\CC}{\mathbb{C}}
\newcommand{\ZZ}{\mathbb{Z}}

\newcommand{\QQ}{\mathbb{Q}}
\newcommand{\cH}{\mathcal{H}}
\newcommand{\cF}{\mathcal{F}}

\newcommand{\cB}{\mathcal{B}}
\newcommand{\cL}{\mathcal{L}}
\newcommand{\cK}{\mathcal{K}}

\newcommand{\JW}{^{J}\! W}

\hypersetup{
	colorlinks=true,
	linkcolor=myblue,
	citecolor=myred,
	urlcolor=myblue
}
\title[Factorization of matrix of Kazhdan-Lusztig polynomials]{On factorization of matrix of \\ Kazhdan-Lusztig polynomials}
\author[Bhattacharya]{Aritra Bhattacharya}
\address{Beijing International Center for Mathematical Research, Peking University, Beijing 100871, China}
\email{matharitra@gmail.com}

\author[Mishra]{Ashish Mishra}
\address{The LNM Institute of Information Technology, Jaipur-302031, India}
\email{ashishmsr84@gmail.com, ashish.mishra@lnmiit.ac.in}

\author[Srivastava]{Shraddha Srivastava}
\address{Indian Institute of Technology Dharwad, 580011, India}
\email{maths.shraddha@gmail.com}

\keywords{Hecke algebras, Kazhdan-Lusztig polynomials, Hybrid bases, Perverse sheaves}
  
\subjclass{20C08, 14M15}

\date{\today}

\begin{document}
	
	\maketitle
	
	\begin{abstract}
		Let $\cH = \cH(W,S)$ be the Hecke algebra of the Coxeter system $(W,S)$ over $\ZZ[q^{\pm1}]$, where $W$ is the Weyl group of a symmetrizable Kac-Moody algebra. 
 In this paper, we show that the matrix of Kazhdan-Lusztig polynomials of $\cH$  factorizes into a product of $|S|$ many matrices, each of which has entries as polynomials in $q$ with nonnegative coefficients. To achieve this goal, we use hybrid basis $TC^J$ for  $J\subseteq S$ of $\cH$, defined by Grojnowski-Haiman. %This hybrid basis interpolates between the standard basis and Kazhdan-Lusztig basis of $\cH$. 
 The intermediate matrices in the aforementioned factorization turn out to be the transition matrices from $TC^J$-basis to $TC^I$-basis for $I\subset J$. Equivalently, these coefficients can be computed using a natural restriction map from $\cH$ to the parabolic Hecke algebra $\cH_J$. Moreover, following the ideas from Grojnowski-Haiman, we also give a geometric proof of the positivity of these coefficients.
 
 %For  $I\subseteq J$, it is natural to consider the coefficients of $TC^J$-basis in the expansion of  $TC^I$-basis. In this paper, we show that these coefficients are also polynomials in $q$ with nonnegative coefficients.  

	%For a Coxeter system $(W,S)$ and $J\subseteq S$, Grojnowski-Haiman defined a basis, called Hybrid basis $TC^J$, of the Hecke algebra $\cH$ of $W$ over the Laurent polynomial ring $\ZZ[q^{\pm1}]$. This hybrid basis interpolates between the standard basis and Kazhdan-Lusztig basis of $\cH$.	 The coefficients of $KL$-basis in the expansion of the standard basis are the classical KL polynomials. For  $I\subseteq J$, it is natural to consider the coefficients of $TC^J$-basis in the expansion of  $TC^I$-basis. In this paper, we show that these coefficients are also polynomials in $q$ with nonnegative coefficients.  This implies that the matrix of Kazhdan-Lusztig polynomials factorizes into a product of $|S|$ many matrices, each of which has entries as polynomials in $q$ with nonnegative coefficients. 

	\end{abstract}
	
	\section{Introduction}
	
%	The study of Kazhdan-Lusztig polynomials is a centerpiece in the interaction between Coxeter group combinatorics, geometry, and representation theory. 
For Hecke algebra of a Coxeter system, Kazhdan and Lusztig \cite{KL_original} defined Kazhdan-Lusztig polynomials as the integral polynomial coefficients for the change of basis from Kazhdan-Lusztig basis to standard basis and showed in \cite{KL80} that for finite and affine Weyl groups, these polynomials have nonnegative coefficients. Later on, Elias and Williamson \cite{EW14} showed the nonnegativity of these coefficients for arbitrary Coxeter systems. 

While considering the question of positivity of coefficients for the multiplication by Kazhdan-Lusztig basis element, Grojnowski and Haiman introduced hybrid basis, generalizing well-known positivity theorems of Springer \cite{Springer}, Lusztig \cite{Lusztig85}, and Dyer and Lehrer \cite{DL90}.  As hybrid basis interpolates between standard basis and Kazhdan-Lusztig basis, a natural question to explore in this more general setting is to study the positivity of coefficients for the change of basis between hybrid bases.  We answer  this question in \autoref{thm:mainB}, which provides us a factorization of the matrix of Kazhdan-Lusztig polynomials into a product of matrices, each of which has entries as polynomials with nonnegative coefficients, see \autoref{thm:mainC}.

%	The hybrid basis, as the name suggests, is a hybrid of standard basis and Kazhdan-Lusztig basis and interpolates between these two bases. 
	
	Consider a Coxeter system $(W,S)$, where $W$ is the Weyl group of a symmetrizable Kac-Moody algebra and $S$ is a set of simple reflections that generate $W$. The Hecke algebra $\cH$ is a $\ZZ[q^{\pm 1}]$-algebra that serves as a $q$-analogue of the group algebra $\ZZ[W]$ of $W$.
	For a subset $J \subseteq S$, one can consider the Coxeter subsystem $(W_J,J)$, where $W_J$ is the (parabolic) subgroup of $W$ generated by $J$ and let $\cH_J$ be its corresponding Hecke algebra. The Hecke algebras have a standard basis $\{T_w:w \in W\}$ indexed by elements of the Weyl group. Consider a natural restriction map $|_J: \cH \to \cH_J$ by defining on standard basis elements \begin{align*}
		\qquad T_w|_J = \begin{cases}
			T_w &\hbox{ if } w \in W_J,
			\\
			0 &\hbox{ otherwise,}
		\end{cases}
	\end{align*} and extending linearly to $\cH$.
	
%	There is a natural restriction map $\cH \to \cH_J$ given by sending the standard basis elements $T_w$ to itself or $0$ depending on whether $w$ is in $W_J$ or not. In other words, let $(T_w:w \in W)$ denote the standard basis of the Hecke algebra $\cH$, the restriction map $|_J:\cH \to \cH_J$ defined by \begin{align*}
%		&|_J: \cH \to \cH_J
%		\\
%		&\qquad T_w \mapsto \begin{cases}
%			T_w &\hbox{ if } w \in W_J,
%			\\
%			0 &\hbox{ otherwise,}
%		\end{cases}
%	\end{align*} and extend linearly to $\cH$. 
	
	Let $\{C_w:w \in W\}$ denote the Kazhdan-Lusztig basis of $\cH$ (these are denoted $C'_w$ in \cite{KL_original} by Kazhdan and Lusztig). Then one can ask whether the coefficients of $C_w|_J$ in the Kazhdan-Lusztig  basis of $\cH_J$ are nonnegative. In this paper, we confirm that this is indeed true. Let $W^J$ denote the minimal length left coset representatives of $W/W_J$. Our main result states that 
	\begin{mainthm}\label{thm:mainA}
		For $u \in W^J$ and $w \in W$, the coefficients of $(T_{u^{-1}}C_w)|_J$ with respect to the Kazhdan-Lusztig  basis of $\cH_J$ are polynomials in $q$ with nonnegative coefficients.
	\end{mainthm}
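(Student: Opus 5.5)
The plan is to prove Theorem~\ref{thm:mainA} geometrically, in the spirit of Grojnowski--Haiman, by realizing $(T_{u^{-1}}C_w)|_J$ as the class of a \emph{pure} equivariant complex on the flag variety of the Levi $L_J$. Let $G$ be the Kac--Moody group with Borel $B$ and flag ind-variety $X=G/B$. Let $P_J\supseteq B$ be the standard parabolic with Levi $L_J$ and $B_J=B\cap L_J$. We work over $\overline{\mathbb F}_p$ with Frobenius weights, or equivalently with mixed Hodge modules. Everything is supported on the finite-dimensional Schubert variety $X_w=\overline{BwB/B}$, so the ind-structure causes no trouble.

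\textbf{Step 1 (dictionary).} By Kazhdan--Lusztig, the coefficient of $T_x$ in $C_w$ is, up to the usual normalization $q^{-\ell(w)/2}$, the Poincar\'e polynomial of the stalk of $\mathrm{IC}(X_w)$ at $xB$. I will check directly on the standard basis that the coefficient of $T_y$ ($y\in W_J$) in $(T_{u^{-1}}C_w)|_J$ is, up to an explicit power of $q$ depending only on $u$, the coefficient of $T_{uy}$ in $C_w$. The key input is that $u\in W^J$ gives $\ell(uy)=\ell(u)+\ell(y)$, so $T_{u^{-1}}T_{uy}=q^{\ell(u)}T_y+(\text{terms }T_z,\ z\notin W_J)$, and no other $T_x$ contributes to $W_J$. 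Hence $(T_{u^{-1}}C_w)|_J$ is the stalk generating function of the restriction $F:=\mathrm{IC}(X_w)|_{uP_J/B}$. Translating by $u^{-1}$, $F$ becomes a complex on $P_J/B\cong L_J/B_J$.

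\textbf{Step 2 (equivariance).} Since $u\in W^J$, $u$ sends positive roots of $J$ to positive roots, so $uB_Ju^{-1}\subseteq B$. Therefore $F$ is $uB_Ju^{-1}$-equivariant, and its translate is a $B_J$-equivariant, i.e.\ Schubert-constructible, complex on $L_J/B_J$.

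\textbf{Step 3 (purity, the main obstacle).} Restriction to a closed subvariety does not preserve purity in general, so this needs an argument. I would choose a cocharacter $\lambda$ of the centre of $L_J$ that is strictly positive on the roots of the unipotent radical $U_{P_J}$. Conjugated by $u$, $\lambda$ acts on the open neighbourhood $N=u\,U^-_{P_J}P_J/B$ of $uP_J/B$. It contracts $N$ onto its fixed component $uP_J/B$, with the contraction map $\pi\colon N\to uP_J/B$. Since $\mathrm{IC}(X_w)$ is $T$-equivariant, hence $\lambda$-equivariant, the contraction lemma (Springer; Braden's hyperbolic localization) gives $i^*\mathrm{IC}(X_w)\cong\pi_*\mathrm{IC}(X_w)|_N$. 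The right side is pure of weight $0$, because $i^*$ can only lower weights and $\pi_*$ can only raise them. The delicate points are making sure $\lambda$ really contracts $N\cap X_w$, which is finite-dimensional, and that $N$ is the correct attracting set in the Kac--Moody setting.

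\textbf{Step 4 (conclusion).} By the decomposition theorem, a pure $B_J$-equivariant complex on $L_J/B_J$ is a direct sum $\bigoplus_v \mathrm{IC}(X^J_v)[m]$ of shifted Schubert IC sheaves, with no odd-weight or Tate-twist cancellations. Taking stalk generating functions, $(T_{u^{-1}}C_w)|_J=\sum_{v\in W_J}p_v(q)\,C_v$, where $p_v(q)$ counts, with $q$-grading by the shift, the multiplicity of $\mathrm{IC}(X^J_v)$. Each $p_v$ is therefore a polynomial with nonnegative coefficients. The last thing to confirm is that the $q$-normalization from Step 1 is consistent with the Kazhdan--Lusztig normalization in $\cH_J$, so that $p_v\in\mathbb Z_{\ge0}[q]$ and not merely a nonnegative Laurent polynomial in $q^{1/2}$.
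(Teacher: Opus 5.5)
Your argument is correct, but it takes a different route from the paper.

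\textbf{The paper's routes.} The paper proves Theorem~\ref{thm:mainA} algebraically. Proposition~\ref{res:CtoTCisres} identifies $h^J_{uv,w}$ with $[TC^J_{uv}]C_w$, positivity is imported from Grojnowski--Haiman (Theorem~\ref{th:GH}), and membership in $\ZZ[q]$ comes from the unitriangularity argument after \eqref{eq:coeff}. Its own geometric proof in Section~\ref{sec:posGH} covers finite type only. It first uses $\Psi$ to pass to $(C_wT_{x^{-1}})|_J$ with $x\in{}^JW$. It then realizes this as the hyperbolic localization $\varphi^*\pi_!j^*$ onto the fixed component $LxB/B$ along its attracting set $PxB/B$, which is generally not open, and gets purity from Braden's theorem.

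\textbf{Your route.} You keep $u\in W^J$ and conjugate the cocharacter by $u$, so that $uP_J/B$ is a component whose basin $N$ is open. Hyperbolic localization then degenerates to the plain restriction $i^*$. Braden's theorem collapses to the contraction lemma $i^*\cong\pi_*(-)|_N$ plus the two one-sided weight bounds.

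\textbf{Comparison.} Your version is more elementary and gives a transparent dictionary. Your Step~1 is just Lemma~\ref{lem:resorthogonality} and \eqref{eq:rescoeffgenT}, read as stalks of $\mathrm{IC}(X_w)$ at the points $uyB$. So you need neither $\Psi$ nor a computation of $\pi_!$ on standard objects; there, $Y_{vx}\to LxB/B$ has affine fibres of dimension $\ell(x)$, so a uniform shift and twist must be tracked. In exchange, the paper's main route covers all symmetrizable Kac--Moody types by citation and yields Theorem~\ref{thm:mainB} at the same time.

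\textbf{Your two loose ends are harmless.}
\begin{itemize}
\item In the Kac--Moody case, $uP^-_JB/B=u\bigsqcup_{y\in W_J}B^-yB/B$ is open and $T$-stable, because its complement is a union of opposite cells indexed by the Bruhat upper set $W\setminus W_J$. So the contraction can be run on $N\cap X_w$ inside the projective variety $X_w$. The paper's own geometric proof does not treat this case at all.
\item The normalization question is settled by the algebraic fact $h^J_{uv,w}\in\ZZ[q]$. Geometry therefore only needs to supply nonnegativity of a Laurent polynomial.
\end{itemize}
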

	
	 In what follows, the polynomials in \autoref{thm:mainA} are referred to as restriction coefficients. In Section \ref{sec:parKL} we show that the parabolic Kazhdan-Lusztig  polynomials corresponding to the sign representation appear as a special case of these. 
	 Positivity of coefficients in \autoref{thm:mainA} can be equivalently reinterpreted in terms of coefficients of change of basis for a hybrid basis $TC^J := \{TC^J_w: w \in W\}$ of $\cH$ defined by  Grojnowski and Haiman \cite{GrojnowskiHaiman}. This hybrid basis interpolates between the standard basis (when $J= \emptyset$) and the Kazhdan-Lusztig  basis (when $J=S$). Since the Kazhdan-Lusztig  basis expands positively in  the standard basis, one may ask a more general question: whether $TC^J$-basis expands positively in the $TC^I$-basis whenever $I \subseteq J \subseteq S$. We show that this is indeed the case in the following theorem. 

	\begin{mainthm}\label{thm:mainB} For $I \subseteq J \subseteq S$ and $w \in W$, the coefficients $h^{I,J}_{x,w}(q)$ in the expansion 
	   	\begin{equation}
		TC^J_w = \sum_{x \in W}^{} h^{I,J}_{x,w}(q) TC^I_x
	\end{equation} are polynomials in $q$ with nonnegative coefficients.
	   \end{mainthm}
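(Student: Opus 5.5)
The plan is to reduce \autoref{thm:mainB} to the special case $J=S$ inside the Coxeter group $W_J$, and then to identify the resulting coefficients with the restriction coefficients of \autoref{thm:mainA}.

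\emph{Reduction.} I use the Grojnowski--Haiman convention that for $w=uv$ with $u\in W^J$ and $v\in W_J$ one has $TC^J_w=T_uC_v$, where $C_v$ is the Kazhdan--Lusztig basis element of $\cH_J$. Write $v=u'v'$ with $u'$ a minimal length left coset representative of $W_J/W_I$ and $v'\in W_I$. Suppose that inside $\cH_J$ we have an expansion
\begin{equation*}
C_v=\sum_{u',v'} h_{u'v',v}(q)\, T_{u'}C_{v'} .
\end{equation*}
Multiplying on the left by $T_u$ gives $T_uT_{u'}=T_{uu'}$, since $u\in W^J$ and $u'\in W_J$ force lengths to add. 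Moreover $uu'\in W^I$. Hence
\begin{equation*}
TC^J_w=\sum h_{u'v',v}(q)\, TC^I_{uu'v'},
\end{equation*}
and therefore $h^{I,J}_{x,w}$ is either $0$ or one of the $h_{u'v',v}$. It thus suffices to prove \autoref{thm:mainB} for the pair $I\subseteq S$, $J=S$, applied to the Coxeter system $(W_J,J)$. That system is again the Weyl group of a symmetrizable Kac--Moody algebra, namely the Levi subalgebra.

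\emph{Identification with restriction coefficients.} With $J=S$, write $C_w=\sum_{x\in W^I}T_x\,h_x$ with $h_x\in\cH_I$; this decomposition is unique since $\cH$ is free over $\cH_I$ with basis $\{T_x : x\in W^I\}$. I want to show that the coefficients of $h_x$ in the Kazhdan--Lusztig basis of $\cH_I$ are determined by the restrictions $(T_{x^{-1}}C_w)|_I$, to which \autoref{thm:mainA} applies. The key computation is the pairing $(T_{y^{-1}}T_x h)|_I$ for $x,y\in W^I$ and $h\in\cH_I$. When $x=y$ I expect it to equal $q^{\ell(x)}h$ in a suitable normalization. When $x\neq y$ I expect it to vanish, or at least to be triangular with respect to the Bruhat order on $W^I$. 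If it is exactly diagonal, then $h_x=q^{-\ell(x)}(T_{x^{-1}}C_w)|_I$ up to the normalization of $T$, and positivity follows directly from \autoref{thm:mainA}.

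\emph{Main obstacle.} The difficulty is this off-diagonal vanishing. Products $T_{y^{-1}}T_x$ can produce terms $T_z$ with $z\in W_I$ even when $x\ne y$, because the quadratic relation yields lower terms. A merely triangular system would involve inverting a matrix, which destroys positivity.

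\emph{Geometric fallback.} If the algebraic route fails, I would prove positivity directly, following Grojnowski--Haiman. Consider the fibration $\pi\colon G/B\to G/P_I$ and restrict the intersection cohomology complex of the Schubert variety $\overline{BwB/B}$ to $\pi^{-1}(BxP_I/P_I)$. This preimage is locally trivial, isomorphic to $BxP_I/P_I\times P_I/B$. By the decomposition theorem and purity, the restriction to a slice $\{pt\}\times P_I/B$ is a direct sum of shifted intersection cohomology complexes of Schubert varieties in $P_I/B$. Their graded multiplicities are exactly the coefficients $h_{xv',w}$ of $T_xC_{v'}$. Positivity then follows from the fact that multiplicities of shifted simple perverse sheaves are nonnegative, with parity vanishing ensuring that these multiplicities are genuine polynomials in $q$. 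The points I expect to need care with are the normalization of the shifts and the Kac--Moody, ind-variety setting; for the latter I would work with finite-dimensional Schubert varieties throughout.
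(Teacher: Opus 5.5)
Your reduction to the case $J=S$ is the paper's own (\autoref{lem:uz}, \autoref{lem:TuTCzI}). The ``main obstacle'' you flag is not an obstacle. Let $\tau(T_a)=\delta_{a,1}$; then $\tau(T_aT_b)=\delta_{ab,1}$. So for $x,y\in W^I$ and $z\in W_I$, the coefficient of $T_z$ in $T_{y^{-1}}T_x$ is $\tau(T_{y^{-1}}T_xT_{z^{-1}})=\tau(T_{y^{-1}}T_{xz^{-1}})=\delta_{xz^{-1},y}$. Hence $(T_{y^{-1}}T_x)|_I=\delta_{x,y}$ exactly (\autoref{lem:resorthogonality}), and $h_x=(T_{x^{-1}}C_w)|_I$ with no power of $q$ (\autoref{res:CtoTCisres}).

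\textbf{The gap: positivity itself is never supplied.} In the paper, \autoref{thm:mainA} is \autoref{th:resmainthm}, and it is deduced from \autoref{thm:mainB} through exactly this identification. The two statements are equivalent, so appealing to \autoref{thm:mainA} only restates the case $J=S$. The paper closes that case with an external input you never use: Grojnowski--Haiman's \autoref{th:GH} with $u=1$. Since $TC^I_1=1$, it says directly that $C_{v'}$ expands positively in the $TC^I$-basis.

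Your geometric fallback does not fill this gap as written.
\begin{itemize}
\item The decomposition theorem concerns proper pushforwards, not restrictions. The $*$-restriction of the pure complex $\mathrm{IC}_w$ to the locally closed fibre $xP_I/B$ is a priori only of weights $\le$ its weight.
\item Pointwise purity of stalks does not imply semisimplicity. For example, $j_!\underline{\QQ}_{\mathbb{A}^1}$ on $\mathbb{P}^1$ has pointwise pure stalks, yet its class is $[\underline{\QQ}_{\mathbb{P}^1}]-[\QQ_0]$, i.e.\ $T_s=C_s-q$.
\end{itemize}
The missing ingredient is Braden's hyperbolic localization, as in \S\ref{sec:posGH}:
\begin{itemize}
\item Choose a cocharacter $\lambda$ of $T$ that is central in the Levi of $P_I$ and positive on the simple roots outside $I$.
\item Then $xP_I/B=xLB/B$ is a connected component of the fixed locus of $x\lambda x^{-1}$, and it is its own attracting set as $t\to 0$.
\item Hence $*$-restriction to it is a hyperbolic localization functor. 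It therefore preserves purity of the $T$-equivariant complex $\mathrm{IC}_w$, and purity gives semisimplicity.
\end{itemize}
With that step, or simply by citing \autoref{th:GH}, your argument goes through. One minor correction: polynomiality in $q$ comes from unitriangularity (\S\ref{sec:res}), not from parity vanishing.
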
 
	  See Section \ref{sec:pos} for a proof of \autoref{thm:mainA} and \autoref{thm:mainB}. These are \autoref{th:resmainthm} and \autoref{th:mainpos}, respectively. 
	  \autoref{thm:mainB} immediately implies an interesting property of the matrix of Kazhdan-Lusztig polynomials:
	   
	   \begin{mainthm}\label{thm:mainC} For a chain of subsets 
	   	\begin{equation}\label{chain}
\emptyset = J_0 \subset J_1 \subset \cdots \subset J_k=S,
	   	\end{equation} the matrix of Kazhdan-Lusztig polynomials of $\cH$ factorizes into a product of $k$ many matrices, each of which has entries as polynomials in $q$ with nonnegative coefficients. In particular,  we can take the Chain \ref{chain} in such a manner that each subset $J_i$ has $i$ elements and $k=|S|.$
	   \end{mainthm}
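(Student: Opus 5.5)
The plan is to deduce \autoref{thm:mainC} directly from \autoref{thm:mainB} by composing transition matrices along the chain. For $J \subseteq S$, let $M^{J}$ be the $W\times W$ matrix of coefficients expressing the basis $TC^J$ in the standard basis, so that $TC^J_w=\sum_x M^J_{x,w}T_x$. First we record the two endpoint cases. When $J=\emptyset$ the hybrid basis is the standard basis, $TC^\emptyset_w=T_w$, so $M^\emptyset$ is the identity. When $J=S$ it is the Kazhdan-Lusztig basis, $TC^S_w=C_w$, so $M^S$ is the matrix of Kazhdan-Lusztig polynomials (up to the normalization of $C_w$ fixed in the paper). For $I\subseteq J$, write $H^{I,J}=(h^{I,J}_{x,w}(q))_{x,w}$ for the transition matrix in \autoref{thm:mainB}. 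Then $TC^J_w=\sum_y h^{I,J}_{y,w}TC^I_y=\sum_{x}\big(\sum_y M^I_{x,y}h^{I,J}_{y,w}\big)T_x$, and comparing coefficients gives $M^J=M^I H^{I,J}$.

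Applying this along the chain \eqref{chain} and telescoping gives
\begin{equation*}
M^S = M^{J_0}H^{J_0,J_1}H^{J_1,J_2}\cdots H^{J_{k-1},J_k}=H^{J_0,J_1}H^{J_1,J_2}\cdots H^{J_{k-1},J_k}.
\end{equation*}
By \autoref{thm:mainB}, each factor has entries that are polynomials in $q$ with nonnegative coefficients. The final clause is immediate: choose an ordering $s_1,\dots,s_n$ of $S$ and set $J_i=\{s_1,\dots,s_i\}$. Then $|J_i|=i$ and $k=|S|$.

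The only point that needs real care is that these infinite matrix products make sense when $W$ is infinite. For this I would use the unitriangularity of the hybrid bases with respect to Bruhat order, which is part of their definition in Grojnowski-Haiman: $TC^J_w\in T_w+\sum_{x<w}\ZZ[q^{\pm1}]T_x$. This makes each $M^J$ and each $H^{I,J}$ upper unitriangular with finitely many nonzero entries in every column, since the Bruhat interval below $w$ is finite. So every entry of a product is a finite sum, the products are associative, and the coefficient comparison above is legitimate.

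I expect no genuine obstacle beyond this finiteness bookkeeping. The other thing to confirm is that the paper's normalization of $C_w$ (with or without powers of $q^{1/2}$) agrees with that of the Kazhdan-Lusztig polynomials. If it does not, a diagonal rescaling by powers of $q$ is needed, and it can be absorbed into one of the factors without affecting nonnegativity.
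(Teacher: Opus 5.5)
Your proposal is correct and is the same argument the paper intends when it says that \autoref{thm:mainB} immediately implies \autoref{thm:mainC}: the relation $M^J=M^IH^{I,J}$ telescopes along the chain from $M^{\emptyset}=\mathrm{Id}$ to $M^S=(h_{x,w}(q))$. Your normalization worry does not arise, because the paper defines the Kazhdan-Lusztig matrix directly by $C_w=TC^S_w=\sum_x h_{x,w}(q)T_x$, so no rescaling is needed.
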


	   We prove \autoref{lem:uz} and \autoref{lem:TuTCzI}, and use these along with \cite[Theorem 3.2]{GrojnowskiHaiman} to prove
	\autoref{thm:mainB}. Grojnowski and Haiman have used the techniques from perverse sheaves and mixed Hodge modules in order to prove  \cite[Theorem 3.2]{GrojnowskiHaiman}. In Section \ref{sec:posGH},  we give a geometric proof of \autoref{thm:mainA} that is essentially the arguments of \cite{GrojnowskiHaiman}. Since these geometric methods are available only for Weyl groups of symmetrizable Kac-Moody algebras, the corresponding statements of Theorems \ref{thm:mainA}, \ref{thm:mainB} and \ref{thm:mainC} for general Coxeter groups remain open. Computing explicitly the restriction coefficients in the case of dihedral groups in \autoref{prop:posDG} leads us to conclude that the positivity holds true for dihedral groups. In a subsequent work, we would like to study the corresponding positivity question for all Coxeter groups using the machinery of Soergel bimodules from \cite{EW14}. 
	
%	{\color{red} In the final section of this paper,  we also give a brief exposition of the part of the proof from their paper that is required for our purpose}.
	
	 We have computed various examples of the restriction coefficients in Section \ref{sec:rescof}. For Weyl group $W$ of type $A_n$, we compute the restriction coefficients $(T_{u^{-1}}C_w)|_{[n-1]}$, where $[n-1] = \{1,2,\ldots,n-1\}$ (for convenience of notation, we use $(T_{u^{-1}}C_w)|_{[n-1]}$ to mean $(T_{u^{-1}}C_w)|_{J[n-1]}$, where $J[n-1] =\{s_1, s_2, \ldots, s_{n-1}\}$), where $u$ runs over the set $W^{[n-1]} = \{1,s_n,s_{n-1}s_n,\ldots, s_2\cdots s_n, s_1s_2\cdots s_n\}$  and $w \in W$. We obtain explicit formulas for the restriction coefficients of $(T_{u^{-1}}C_w)|_{[n-1]}$ when $u = s_1s_2\cdots s_n, s_2s_3\cdots s_n$ and $s_3\cdots s_n$. 
	
	The organization of the paper is as follows. Section \ref{sec:HA} gives preliminaries for Hecke algebra of a Coxeter system. Section \ref{sec:res} contains restriction map and its properties. In Section \ref{sec:hybrid}, we define hybrid basis following \cite{GrojnowskiHaiman} and give connection between hybrid basis and the restriction map in \autoref{res:CtoTCisres}. Section \ref{sec:pos} deals with the proof of Theorems \ref{thm:mainA} and \ref{thm:mainB}. Section \ref{sec:parKL} shows that the parabolic Kazhdan-Lusztig  polynomials are a special case of restriction coefficients. In Section \ref{sec:rescof}, we compute various examples of restriction coefficients. In Section \ref{sec:posGH}, we give a geometric proof of \autoref{thm:mainA}.
	
	\subsection*{Acknowledgements} 
	We thank Arun Ram for helpful comments in the initial stages of the work. AB thanks Tao Gui for insightful discussions.

	\section{Preliminaries of Hecke algebras}\label{sec:HA}
	This section contains preliminaries of Hecke algebra of a Coxeter system. Our main reference for this section is \cite{IntrotoSoergelBimodules}.
	
	The \textit{Hecke algebra} $\cH = \cH(W)$ of a Coxeter system $(W,S)$ is the $\ZZ[q^{\pm1}]$-algebra generated by $\{T_s:s \in S\}$ with $T_s$ satisfying the braid relations and the quadratic relations: \begin{equation}
		T_s^2 = 1 + (q^{-1}-q)T_s, \qquad \hbox{ for all } s \in S.
	\end{equation} $$ $$
%	Then $$ T_s^{-1} = T_s + q-q^{-1}, \qquad \hbox{ for all } s\in S.$$
	Given a reduced expression $w= s_{i_1}\ldots s_{i_\ell}$ for $w \in W$, define $T_w = T_{s_{i_1}}\ldots T_{s_{i_\ell}}$. Since any two reduced expressions for $w$ are related by the braid relations $T_w$ is a well defined element of $\cH$. The collection $\{T_w: w \in W\}$ forms a basis of $\cH$ known as the \textit{standard basis}. 
%	The Hecke algebra $\cH$ has a basis indexed by the elements of the Coxeter group called the \textit{standard basis} $\{T_w: w \in W\}$, where if $w = s_{i_1}\ldots s_{i_\ell}$ is a reduced word for $w$ then $T_w = T_{s_{i_1}}\ldots T_{s_{i_\ell}}$.
%	If $sw>w$ then $T_sT_w = T_{sw}$ and if $sw<w$ then $$T_sT_w = T_sT_sT_{sw} = (1+(q^{-1}-q)T_s)T_{sw} = T_{sw} + (q^{-1}-q)T_{w}.$$ So 
Later, we also use the following multiplication rule	\begin{equation}\label{eq:TsTw}
		T_sT_w = \begin{cases}
			T_{sw} &\hbox{ if } sw>w,
			\\
			T_{sw} + (q^{-1}-q)T_{w} &\hbox{ if } sw<w.
		\end{cases}
	\end{equation}
		The \textit{bar involution} is a ring automorphism $\cH \to \cH$ given by $$ \overline{T_x} = (T_{x^{-1}})^{-1}, \quad \hbox{ for all } x \in W, \quad \hbox{ and } \quad \overline{q^{\pm1}} = q^{\mp1}.$$
	
	Recall the \textit{Bruhat order} is a partial order on $W$ defined by setting $u \leq v$ if any (or some) reduced expression for $v$ contains a subexpression for $u$.
	
	For each $w \in W$, there is a unique self-dual element $C_w \in \cH$ such that if \begin{equation}\label{eq:KLpolydef}
		C_w = \sum_{x \in W}^{} h_{x,w}(q) T_x,
	\end{equation} then \begin{enumerate}[(a)]
		\item $h_{x,w}(q) = 0$ if $x \nleq w$, and $h_{w,w}(q) = 1$,
		\item $h_{x,w}(q) \in q\ZZ[q]$ for $x<w$,
	\end{enumerate} where $<$ denotes the Bruhat order. In particular, the elements $C_w$ are unitriangular written in terms of standard basis with respect to the Bruhat order. Thus the set $\{C_w:w \in W\}$ forms a basis of $\cH$ known as the \textit{Kazhdan-Lusztig basis}. The Kazhdan-Lusztig  basis was introduced in \cite{KL_original}, where a different normalization is used and their $C'$ is our $C$. We follow the normalization used in \cite{IntrotoSoergelBimodules}, where $T_w$ is denoted $\delta_w$ and $C_w$ is denoted $b_w$.
	The coefficients $h_{x,w}(q) \in \ZZ[q]$ in \eqref{eq:KLpolydef} are known as the \textit{Kazhdan-Lusztig polynomials}. Unless there is ambiguity, we suppress $q$ in $h_{x,w}(q)$ and write $h_{x,w}$ for notational convenience.

	\begin{example}
		
	Two particular examples of Kazhdan-Lusztig  basis elements are as follows:
	\begin{enumerate}[(a)]
		\item For $s \in S$, $C_s = T_s + q.$
		
		\item Suppose $W$ is finite and $w_0$ the longest element in it. Then $$ C_{w_0} = \sum_{y \in W}^{} q^{\ell(w_0)-\ell(y)} T_y.$$
	\end{enumerate}
\end{example}

%	The expression $[q] h_{z,w}$ denotes the coefficient of $q$ in $h_{z,w}$. 
The Kazhdan-Lusztig  basis elements can be recursively constructed in the following way: Start with $C_1 = 1$.
	For $s \in S$ and $w \in W$ such that $ws>w$, we have \begin{equation}\label{eq:CwCs}
		C_{ws} = C_w C_s - \sum_{\substack{z<w \\ zs<z}}^{} \mu(z,w) C_z,
	\end{equation} where $\mu(z,w) :=$ the coefficient of $q$ in $h_{z,w}$.

%\subsection{$\omega$-involution and standard form}

The \textit{$\omega$-involution} is a ring anti-automorphism $\omega: \cH \to \cH$ given by $$ \omega(T_x) = T_{x}^{-1}, \qquad \hbox{ for all } x \in W, \qquad \hbox{ and } \qquad \overline{q^{\pm1}} = q^{\mp1}.$$

There is a standard $\ZZ$-bilinear form $(-,-):\cH \times \cH \to \ZZ[q^{\pm 1}]$ on $\cH$ defined by  $(\overline{T_x},T_y) = \delta_{x,y}$ for $x,y \in \cH$. The adjoint of multiplication by $a \in \cH$ under this form is given by multiplication by $\omega(a)$: \begin{equation}
	(a h, h') = (h, \omega(a) h') \qquad  \hbox{ for }  a, h, h' \in \cH.
\end{equation}

We recall the recursions for the Kazhdan-Lusztig  polynomials $h_{y,x}(q)$ following \cite{Casselmannnotes}. 
%For $y \leq x$, let $$\mu(y,x) = [q]h_{y,x}$$ denote the coefficient of $q$ in the Kazhdan-Lusztig polynomial $h_{y,x}$.

\begin{proposition}[Proposition 4.4 \cite{Casselmannnotes}]\label{prop:mainrecforKL}
	Suppose $s \in S$ such that $sx<x \mbox{ and }y \leq x$. Then
	\begin{align*}
		h_{y,x} = \begin{cases}
			\qquad\qquad\qquad 1 &\hbox{ if } y=x,
			\\
			h_{sy,sx}+q^c h_{y,sx} - \sum\limits_{\substack{w \\ y \leq w <sx \\ sw<w}}^{} \mu(w,sx) h_{y,w} &\hbox{ if } y \neq x,
		\end{cases}
	\end{align*} where $$ c = \begin{cases}
		-1 & \hbox{ if } sy<y,
		\\
		+1 & \hbox{ if } sy>y.
	\end{cases}$$
\end{proposition}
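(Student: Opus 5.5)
The plan is to deduce the recursion by comparing coefficients in the recursive formula \eqref{eq:CwCs}, applied with $w = sx$. Since the statement is phrased with left multiplication by $s$, I will use the left-handed version
\[
C_x = C_s C_{sx} - \sum_{\substack{z<sx\\ sz<z}} \mu(z,sx) C_z .
\]
It follows from \eqref{eq:CwCs} by applying the anti-automorphism $T_w\mapsto T_{w^{-1}}$. This map preserves the defining properties of the Kazhdan--Lusztig basis, so it sends $C_w\mapsto C_{w^{-1}}$ and gives $h_{y,x}=h_{y^{-1},x^{-1}}$ and $\mu(z,w)=\mu(z^{-1},w^{-1})$. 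The case $y=x$ is just the normalization $h_{x,x}=1$, so I will assume $y\neq x$ (so $y<x$).

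The key step is to compute $C_sC_{sx}$ in the standard basis. I write $C_{sx}=\sum_z h_{z,sx}T_z$ and use $C_s = T_s + q$ together with \eqref{eq:TsTw}. For each $z$, the product is
\[
(T_s+q)T_z =
\begin{cases}
T_{sz}+qT_z & \text{if } sz>z,\\
T_{sz}+q^{-1}T_z & \text{if } sz<z .
\end{cases}
\]
Now I collect the coefficient of $T_y$. The term $T_{sz}$ contributes exactly when $z=sy$, giving $h_{sy,sx}$. The term $q^{\pm1}T_z$ contributes when $z=y$, giving $q^{c}h_{y,sx}$, where $c=+1$ if $sy>y$ and $c=-1$ if $sy<y$. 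So the coefficient of $T_y$ in $C_sC_{sx}$ is $h_{sy,sx}+q^c h_{y,sx}$.

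Next I subtract the coefficient of $T_y$ in $\sum_{z<sx,\, sz<z}\mu(z,sx)C_z$, which is $\sum \mu(z,sx)\,h_{y,z}$. Since $h_{y,z}=0$ unless $y\le z$, the sum restricts to $z$ with $y\le z<sx$ and $sz<z$, which is exactly the sum in the statement. This gives the formula.

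The main obstacle is bookkeeping, not a real difficulty. First, the left/right conventions must be handled carefully, so the inversion symmetry has to be invoked cleanly. Second, terms with $y\not\le sx$ or $sy\not\le sx$ are automatically zero because $h_{\cdot,sx}$ vanishes there, so the formula needs no case distinctions beyond $c$.
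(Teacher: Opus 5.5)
Your proof is correct and complete. You obtain the left-handed form of \eqref{eq:CwCs} by applying $\Psi$; the paper itself records $\Psi(C_w)=C_{w^{-1}}$, and Bruhat order, length and $\mu$ are invariant under inversion. Comparing $T_y$-coefficients using $(T_s+q)T_z=T_{sz}+q^{\pm1}T_z$ then gives exactly the stated recursion, with the correct sign convention for $c$.

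The paper gives no proof of its own here; it quotes the recursion from Casselman's notes. So there is no competing argument to compare against. Your derivation is the standard coefficient comparison behind such recursions, and it is carried out directly in the paper's (Soergel) normalization. That avoids translating from the conventions of the cited source.

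As a small bonus, your computation shows the case split is unnecessary. For $y=x$ the same formula returns $h_{sx,sx}=1$, because $h_{x,sx}=0$ and the sum over $x\le w<sx$ is empty.
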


\begin{proposition}[Corollary 4.5 of \cite{Casselmannnotes}]\label{prop:Casselmannrec1}
	If $s \in S$ is such that $sx<x, sy<y<x, y \nleq sx$, then $$ h_{y,x} = h_{sy,sx}.$$
\end{proposition}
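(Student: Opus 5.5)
We deduce the statement from \autoref{prop:mainrecforKL}, splitting according to whether $y \le sx$. If $y \le sx$ there is nothing to do beyond the recursion, so the real content is the case $y \nleq sx$. The plan is to show that in this case the second and third terms of the recursion vanish, leaving only $h_{sy,sx}$.

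Assume $sx<x$, $sy<y<x$ and $y\nleq sx$. Since $y \neq x$, \autoref{prop:mainrecforKL} gives
\begin{equation*}
h_{y,x} = h_{sy,sx} + q^{-1} h_{y,sx} - \sum_{\substack{w \\ y\le w<sx \\ sw<w}} \mu(w,sx)\, h_{y,w},
\end{equation*}
with $c=-1$ because $sy<y$. We treat the two extra terms separately.

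\begin{enumerate}[(a)]
\item By property (a) of the Kazhdan--Lusztig basis, $h_{y,sx}=0$ whenever $y\nleq sx$. Hence the term $q^{-1}h_{y,sx}$ vanishes.
\item Each summand in the sum requires some $w$ with $y\le w<sx$. Transitivity of the Bruhat order would then give $y\le sx$, a contradiction. So the index set is empty and the sum vanishes.
\end{enumerate}

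With both terms gone, the recursion reduces to $h_{y,x}=h_{sy,sx}$, as claimed.

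The only point needing care is that \autoref{prop:mainrecforKL} really applies here. Its hypotheses are $sx<x$ and $y\le x$, and the latter holds because $y<x$ is assumed. Once this is checked, the argument consists of the two vanishing observations above; no step should be a genuine obstacle.

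It is worth noting that $sy\le sx$ actually holds in this situation, by the lifting property (from $sy<y\le x$ and $sx<x$). This is not needed for the argument, since the identity only records that $h_{y,x}$ equals $h_{sy,sx}$, whatever its value.
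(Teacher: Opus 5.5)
Your argument is correct and is essentially the intended one: the paper gives no proof of its own and cites Casselman's Corollary 4.5, which is exactly the specialization of the recursion in \autoref{prop:mainrecforKL} (Casselman's Proposition 4.4) in which $y\nleq sx$ kills both the term $q^{-1}h_{y,sx}$ and the $\mu$-correction sum. The only blemish is your opening remark about ``splitting according to whether $y\le sx$'', which is superfluous because $y\nleq sx$ is already a hypothesis of the statement.
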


\begin{proposition}[Proposition 5.2 of \cite{Casselmannnotes}]\label{prop:Casselmannrecq}
	If $s\in S$ and $x,y \in W$ such that $sx<x \mbox{ and } sy>y$, then $h_{y,x} = qh_{sy,x}$. 
\end{proposition}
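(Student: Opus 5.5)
The plan is to read off the identity from how $T_s$ acts on $C_x$. The key input is the eigenvalue relation
$$T_s C_x = q^{-1} C_x \qquad \text{whenever } sx<x .$$
Given this, I expand the left side in the standard basis and compare the coefficients of $T_y$.

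\emph{Step 1 (the eigenvalue relation).} Since $C_s=T_s+q$, the relation is equivalent to $C_sC_x=(q+q^{-1})C_x$ for $sx<x$. I would prove this by induction on $\ell(x)$.
\begin{itemize}
\item First, $C_s^2=(q+q^{-1})C_s$. This is a direct computation from the quadratic relation: $(T_s+q)^2 = 1+(q^{-1}-q)T_s+2qT_s+q^2 = (q+q^{-1})(T_s+q)$.
\item Next, I need the left-handed analogue of \eqref{eq:CwCs}: for $sx<x$,
$$C_x = C_sC_{sx}-\sum_{z<sx,\ sz<z}\mu(z,sx)C_z .$$
This follows from \eqref{eq:CwCs} by applying the ring anti-automorphism $T_w\mapsto T_{w^{-1}}$ (fixing $q$). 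That map commutes with the bar involution and preserves the defining conditions (a), (b), so by uniqueness it sends $C_w$ to $C_{w^{-1}}$; in particular $h_{z,w}=h_{z^{-1},w^{-1}}$.
\item Multiplying the left recursion by $C_s$ gives $C_sC_x = C_s^2C_{sx} - \sum \mu(z,sx)\,C_sC_z$. The first term is $(q+q^{-1})C_sC_{sx}$. In the sum, each $z$ has $sz<z$ and $\ell(z)<\ell(x)$, so induction gives $C_sC_z=(q+q^{-1})C_z$. Hence $C_sC_x=(q+q^{-1})C_x$.
\end{itemize}
This step is the only real obstacle, and the main care needed is the well-definedness of the left recursion via the anti-automorphism.

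\emph{Step 2 (comparing coefficients).} Write $T_sC_x=\sum_z h_{z,x}T_sT_z$ and use \eqref{eq:TsTw}.
\begin{itemize}
\item A term $T_y$ with $sy>y$ can arise from $z=sy$: here $sz<z$, so $T_sT_z=T_y+(q^{-1}-q)T_z$, and this contributes $h_{sy,x}$.
\item It could also arise from $z=y$: but $T_sT_y=T_{sy}$, which does not involve $T_y$.
\item No other $z$ contributes to $T_y$.
\end{itemize}
So the coefficient of $T_y$ in $T_sC_x$ is $h_{sy,x}$. By Step 1 the same coefficient equals $q^{-1}h_{y,x}$, hence $h_{y,x}=q\,h_{sy,x}$. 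This holds trivially also when $y\nleq x$, since both sides then vanish consistently through the same identity.
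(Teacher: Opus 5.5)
Your proof is correct and complete. The paper does not prove this statement itself; it imports it from Casselman's notes, as it does Propositions \ref{prop:mainrecforKL} and \ref{prop:Casselmannrec1}. There is therefore no internal argument to compare against, and your proof supplies a self-contained justification from ingredients the paper already records.

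Those ingredients are:
\begin{enumerate}[(i)]
\item the quadratic relation, which gives $C_s^2=(q+q^{-1})C_s$;
\item the recursion \eqref{eq:CwCs};
\item the anti-automorphism $\Psi$ of Section~\ref{sec:hybrid}.
\end{enumerate}
For (iii), the paper already notes that $\Psi(C_w)=C_{w^{-1}}$, so you may cite that instead of re-deriving it via uniqueness. Your re-derivation is nonetheless sound: $\Psi$ commutes with the bar involution, fixes $q$, and Bruhat order is invariant under inversion.

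The transport of \eqref{eq:CwCs} to the left-handed recursion is correct. The conditions $ws>w$ and $zs<z$ become $s(sx)>sx$ and $sz^{-1}<z^{-1}$, and $\mu$ is inversion-invariant. Your induction on $\ell(x)$ closes correctly, since every $z$ in the sum satisfies $\ell(z)<\ell(sx)<\ell(x)$.

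It is worth noticing that the paper's Proposition \ref{prop:mainrecforKL} is precisely the $T$-expansion of this same left-handed recursion. Your eigenvector identity $T_sC_x=q^{-1}C_x$ is the cleaner way to extract the statement from it.

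In Step 2 the bookkeeping is right. The coefficient of $T_y$ with $sy>y$ in $T_sC_x$ comes only from $z=sy$, because $T_sT_y=T_{sy}$. This gives $h_{sy,x}=q^{-1}h_{y,x}$.

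Your closing sentence about $y\nleq x$ is unnecessary. The coefficient comparison already holds verbatim for every $y$ with $sy>y$, with the convention $h_{z,x}=0$ for $z\nleq x$, so no separate case is needed.
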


%\subsection{Notations}

{\bf Notations.} We use the characteristic function $\chi_{-}$ to which takes the value $1$ on a statement if it is true and the value $0$ on a statement if it is false. For example, $$\chi_{w \in W_J} = \begin{cases}
	1 &\hbox{ if } w \in W_J,
	\\
	0 &\hbox{ otherwise.}
\end{cases}$$ We also use the Kronecker delta function $$\delta_{x,y} = \begin{cases}
1 &\hbox{ if } x=y,
\\
0 &\hbox{ if } x\neq y.
\end{cases}$$
If $\cB$ is a basis, then for $b \in \cB$ we write $[b]h$ to denote the coefficient of $b$ in the $\cB$ expansion of $h$, i.e, $$ h = \sum_{b \in \cB }^{} ([b]h) b.$$
	
% \newpage
\section{The restriction map}\label{sec:res}
In this section, we define a restriction map and study its properties. 

Let $J \subseteq S$ and $W_J = \langle J \rangle$ is the parabolic subgroup of $W$ corresponding to $J$, with corresponding Hecke algebra $\cH_J = \cH(W_J)$. Using the standard basis $\{T_w: w \in W\}$ of $\cH$, we  define a restriction map  \begin{align*}
		&|_J: \cH \to \cH_J
		\\
		&\qquad T_w \mapsto \begin{cases}
			T_w &\hbox{ if } w \in W_J,
			\\
			0 &\hbox{ otherwise,}
		\end{cases}
	\end{align*} and extend linearly to $\cH$. 
	
	\begin{proposition}\label{res:resH_Jlinear}
		The restriction map $|_J$ is both left and right $\cH_J$-linear: For $g \in \cH_J$ and $h \in \cH$,	\begin{equation}
			(hg)|_J = h|_J \cdot g \qquad \hbox{ and } \qquad
			(gh)|_J = g\cdot h|_J.
		\end{equation}
	\end{proposition}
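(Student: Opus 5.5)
Both identities are $\ZZ[q^{\pm1}]$-linear in $h$ and in $g$, so I will check them on basis elements: $h = T_w$ with $w \in W$, and $g$ a monomial. Since $\cH_J$ is generated as an algebra by $\{T_s : s \in J\}$, it suffices by induction on products to take $g = T_s$ with $s \in J$. The induction works because if $(hg_1)|_J = h|_J\, g_1$ and the same holds for $g_2$ and every $h$, then
\[
(hg_1g_2)|_J = (hg_1)|_J\, g_2 = h|_J\, g_1 g_2 .
\]
The left-linearity statement is symmetric, so I will do the right version and then repeat the argument with left multiplication, using the left multiplication rule \eqref{eq:TsTw}.

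So fix $w \in W$ and $s \in J$, and compute $(T_wT_s)|_J$ using the right-handed analogue of \eqref{eq:TsTw}:
\[
T_wT_s =
\begin{cases}
T_{ws} & \text{if } ws > w,\\
T_{ws} + (q^{-1}-q)T_w & \text{if } ws < w.
\end{cases}
\]
This follows from the quadratic relation exactly as \eqref{eq:TsTw} does. The key observation is that, since $s \in W_J$, we have $ws \in W_J$ if and only if $w \in W_J$.

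If $w \in W_J$, then every term on the right lies in $\cH_J$ and restriction fixes it. Hence $(T_wT_s)|_J = T_wT_s = T_w|_J\, T_s$, and this product is computed inside $\cH_J$ by the same rule, because length and Bruhat comparisons of $w$ and $ws$ are the same in $W_J$ as in $W$. If $w \notin W_J$, then neither $w$ nor $ws$ lies in $W_J$, so both sides vanish.

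Left multiplication $T_sT_w$ is handled identically, using $sw \in W_J \iff w \in W_J$. There is no real obstacle here. The only point needing care is that the length function of $W_J$ is the restriction of that of $W$, which is standard for parabolic subgroups; it guarantees that $\cH_J$ sits inside $\cH$ as the span of $\{T_w : w \in W_J\}$ with compatible multiplication.
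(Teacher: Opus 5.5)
Your proof is correct and follows essentially the same route as the paper. Both check the generator case $T_s$ ($s\in J$) against a standard basis element $T_w$ using the multiplication rule and the observation that $w\in W_J$ if and only if $ws\in W_J$ (resp.\ $sw\in W_J$), then extend by linearity and induction. The only difference is that the paper inducts on $\ell(v)$ for $T_v$, $v\in W_J$, while you induct on products of generators, which amounts to the same argument.
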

	
	\begin{proof} We prove the left sided $\cH_J$-linearity of the restriction map $|_J$ by induction on the length of the elements of  $W_J$. 
		
\noindent	Let $s \in J$ and $w \in W$.	If $sw>w$, then $T_sT_w = T_{sw}$, which gives \begin{align*}
			(T_sT_w)|_J = T_{sw}|_J = \chi_{w \in W_J} T_{sw} = T_s \chi_{w \in W_J} T_w = T_s T_w|_J.
		\end{align*} If $sw<w$, then by \eqref{eq:TsTw}, we have \begin{align*}
		(T_sT_w)|_J = (T_{sw} + (q^{-1}-q)T_w)|_J = \chi_{w \in W_J} (T_{sw} + (q^{-1}-q) T_w) = T_s \chi_{w \in W_J} T_w = T_s T_w|_J. 
		\end{align*} 
%	Then for $s \in J$ and $w \in W$, $$ (T_s T_w)|_J = T_s T_w|_J,$$
	
\noindent	 By linearity, we conclude that for $s\in J$, $h \in \cH$, $$ (T_sh)|_J = T_s h|_J.$$
	
	\noindent Now by induction on length of $v \in W_J$, we prove that for  $h \in \cH$, $$(T_vh)|_J = T_vh|_J.$$ Suppose $\ell(sv) = \ell(s) + \ell(v)$ with $s \in J, h \in \cH$. Then \begin{align*}
			(T_{sv}h)|_J = (T_s T_v h)|_J = T_s (T_vh)|_J = T_s T_v h|_J = T_{sv} h|_J.
		\end{align*} This proves the left sided $\cH_J$-linearity of the restriction map $|_J$.
		Similarly, the right sided $\cH_J$-linearity is obtained.
	\end{proof}
	
	Using $\cH_J$-linearity, we compute the following examples.

	\begin{example}
		Since $C_s = T_s+q$ for $s \in S$, the restriction $$ C_s|_J = \begin{cases}
			C_s &\hbox{ if } s \in J,
			\\
			q &\hbox{ if } s \notin J.
		\end{cases}$$
		Consider $\{s_1, s_2, \ldots, s_k\} \subseteq S$ and $J=\{s_1, s_2, \ldots, s_{j}\}$ for $j \leq k$. 
		Then \begin{align*}
			(C_{s_1}C_{s_2}\cdots C_{s_k})|_{J} = q^{k-j}C_{s_1}\cdots C_{s_j} = (C_{s_{j+1}}C_{s_{j+2}}\cdots C_{s_k}C_{s_1}\cdots C_{s_j})|_{J}.
		\end{align*}
%		Although we can write the corresponding equality for an arbitrary subset of $\{1,2,\ldots,k\}$, we have considered $[j] $ for the sake of simplicity.
	\end{example}
	
	\begin{example}
	Since ${}^JW \cap W_J = \{1\}$, we get for $u \in W^J$ and $s \in J$,	$(T_{u^{-1}}C_s)|_J = \delta_{u,1}C_s$.
\end{example}
	
	Suppose $u,u' \in W^J$ and $v \in W_J$, then by \autoref{res:resH_Jlinear}, \begin{align*}
		(T_{u^{-1}}T_{u'v})|_J = (T_{u^{-1}}T_{u'}\cdot T_v)|_J = (T_{u^{-1}}T_{u'})|_J \cdot T_v.
	\end{align*}

	\begin{lemma}\label{lem:resorthogonality}
		For $u,u' \in W^J$, $$(T_{u^{-1}}T_{u'})|_J = \delta_{u,u'}.$$
	\end{lemma}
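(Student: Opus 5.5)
We need $(T_{u^{-1}}T_{u'})|_J=\delta_{u,u'}$ for $u,u'\in W^J$. The plan is to expand $T_{u^{-1}}T_{u'}$ in the standard basis and determine which $T_x$ with $x\in W_J$ occur.

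Write $T_{u^{-1}}T_{u'}=\sum_x a_x T_x$. Multiplying out $T_{u^{-1}}=T_{s_{i_1}}\cdots T_{s_{i_m}}$ against $T_{u'}$, using \eqref{eq:TsTw} repeatedly, every $x$ with $a_x\neq 0$ has the form $x=v u'$, where $v$ is a subexpression-product of a reduced word of $u^{-1}$. Equivalently, $x\in\{y u' : y\le u^{-1}\}$. In particular every such $x$ satisfies $x\le u^{-1}u'$ in a suitable sense. A cleaner route is the standard bilinear form below, which also covers the diagonal case.

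By definition of $(-,-)$, $[T_x]h=(\overline{T_x},h)$. Therefore
\[
[T_x](T_{u^{-1}}T_{u'})=(\overline{T_x},T_{u^{-1}}T_{u'})=(\omega(T_{u^{-1}})\overline{T_x},T_{u'}).
\]
Here $\omega(T_{u^{-1}})=T_{u^{-1}}^{-1}=\overline{T_u}$. Hence the coefficient equals $(\overline{T_u}\,\overline{T_x},T_{u'})=(\overline{T_uT_x},T_{u'})$. Now take $x\in W_J$. Since $u\in W^J$, we have $\ell(ux)=\ell(u)+\ell(x)$, so $T_uT_x=T_{ux}$. The coefficient is therefore $(\overline{T_{ux}},T_{u'})=\delta_{ux,u'}$.

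It remains to check when $ux=u'$ with $u,u'\in W^J$ and $x\in W_J$. Then $u$ and $u'$ lie in the same coset $uW_J$, and uniqueness of minimal coset representatives forces $u=u'$ and $x=1$. Consequently $(T_{u^{-1}}T_{u'})|_J=\sum_{x\in W_J}\delta_{ux,u'}T_x=\delta_{u,u'}T_1=\delta_{u,u'}$.

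The main point to verify is the adjointness step, namely that $\omega(T_{u^{-1}})=\overline{T_u}$ together with the identity $(ah,h')=(h,\omega(a)h')$ as stated in the paper. This step also requires the form to be applied with the correct semilinearity conventions, since $\omega$ is semilinear in $q$. If the conventions are awkward, I would fall back on the subexpression argument sketched in the first paragraph. That argument proceeds by induction on $\ell(u)$: strip the first letter $s$ of $u^{-1}=su_1^{-1}$ and use \eqref{eq:TsTw}.
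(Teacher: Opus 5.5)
Your argument is correct and is essentially the paper's proof. You use the adjointness $(ah,h')=(h,\omega(a)h')$ with $\omega(T_{u^{-1}})=\overline{T_u}$ (and $\omega^2=\mathrm{id}$) to turn the coefficient of $T_x$ into $(\overline{T_{ux}},T_{u'})=\delta_{ux,u'}$, and then uniqueness of minimal coset representatives forces $u=u'$ and $x=1$, a step the paper leaves implicit. The semilinearity worry does not arise, because the adjointness identity is applied exactly as stated for arbitrary $a\in\cH$, so neither your vague first paragraph nor the fallback induction is needed.
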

	
	\begin{proof}

		The coefficient of $T_v$ for $v \in W_J$ in $(T_{u^{-1}}T_{u'})|_J$ equals 
	\begin{align*}
			(\overline{T}_v, T_{u^{-1}}T_{u'}) = (\overline{T}_v, \omega(\overline{T}_{u}) T_{u'}) =  (\overline{T}_u\overline{T}_v,  T_{u'}) = (\overline{T}_{uv}, T_{u'}) = \delta_{u,u'} \delta_{1,v}.
		\end{align*} 
	\end{proof}

For $u \in W^J, w \in W$, by \autoref{lem:resorthogonality}, \begin{equation}\label{eq:rescoeffgenT}
	(T_{u^{-1}}C_w)|_J = \sum_{y \leq w}^{} h_{y,w}(q)(T_{u^{-1}}T_y)|_J = \sum_{v \in W_J}^{} h_{uv,w}(q)T_v.
\end{equation}

\begin{lemma}\label{lem:TuinvCwressupport}
	Let $u \in W^J$ and $w \in W.$ Then, for the parabolic factorization $w=w_J w^J$,
	$$ (T_{u^{-1}}C_w)|_J = 0 \hbox{ if } u \nleq w^J.$$
\end{lemma}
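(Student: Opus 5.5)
By \eqref{eq:rescoeffgenT}, for $u\in W^J$ and $w\in W$ we have
\begin{equation*}
(T_{u^{-1}}C_w)|_J=\sum_{v\in W_J} h_{uv,w}(q)\,T_v .
\end{equation*}
By property (a) of the Kazhdan--Lusztig polynomials, $h_{uv,w}=0$ unless $uv\le w$. So the statement reduces to a purely Coxeter-theoretic claim about the Bruhat order: if $uv\le w$ for some $v\in W_J$, then $u\le w^J$. Here $w^J$ is the minimal coset representative of $wW_J$, so that $w$ is the product of $w^J\in W^J$ and $w_J\in W_J$ with lengths adding. I read the factorization in the statement in this sense, consistent with $W^J$ being the minimal left coset representatives of $W/W_J$.

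The plan is to invoke (or reprove) Deodhar's result that the projection $P^J\colon W\to W^J$, $x\mapsto x^J$, is order preserving for the Bruhat order. Since $u\in W^J$ and $v\in W_J$, we have $(uv)^J=u$. Monotonicity then gives
\begin{equation*}
uv\le w \ \Longrightarrow\ u=(uv)^J\le w^J .
\end{equation*}
Taking the contrapositive, $u\nleq w^J$ forces $uv\nleq w$ for every $v\in W_J$. Hence every coefficient $h_{uv,w}$ vanishes, and the restriction is $0$.

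The main step, and the only real content, is the monotonicity of $P^J$. If a self-contained argument is preferred, I would prove it by induction on $\ell(y)$ for the claim that $x\le y$ implies $x^J\le y^J$, using the subword characterization. Write $y=y^Jy_J$ with a reduced word for $y$ given by a reduced word of $y^J$ followed by one of $y_J$. A subexpression giving $x$ then splits as $x=ab$, where $a\le y^J$ is a subword of the $y^J$ part and $b\in W_J$. Then $x^J=(ab)^J=a^J$. It remains to show $a^J\le a\le y^J$, which holds because $a^J$ is a subword of any reduced word of $a$, as $a=a^Ja_J$ with lengths adding. This settles the claim without the induction, so the obstacle is modest. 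The only care needed is in the convention for the side of the parabolic factorization, which must match the left cosets $wW_J$ used in the definition of $W^J$.
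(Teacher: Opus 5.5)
Your proposal is correct and is essentially the paper's proof: you reduce via \eqref{eq:rescoeffgenT} and the support condition on $h_{uv,w}$ to the implication $uv\le w\Rightarrow u\le w^J$, which the paper obtains by citing the order-preservation of the projection $W\to W^J$ (Bj\"orner--Brenti, Proposition 2.5.1). Your subword argument is a valid self-contained proof of that cited fact, and you are right to read the factorization as $w=w^Jw_J$, which matches the left cosets $wW_J$.
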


\begin{proof}
	By \eqref{eq:rescoeffgenT}, if $T_v$ appears in the $T$-expansion of $(T_{u^{-1}}C_w)|_J$ then we must have $uv \leq w$. By \cite[Proposition 2.5.1]{BjornerBrenti} parabolic factorization respects Bruhat order for quotients. Hence $u \leq w^J$. 
\end{proof}	

%{\color{red}\begin{example}Follows from $\mathcal{H}_J$-linearity and hence moved above. 
%					Using \begin{equation}\label{eq:TxCs}
%	T_xC_s = \begin{cases}
%		T_{xs} + qT_x & \hbox{ if } xs>x,
%		\\
%		T_{xs} + q^{-1}T_x & \hbox{ if } xs<x
%	\end{cases} 
%\end{equation} and $\JW \cap W_J = \{1\}$, we get for $u \in W^J$ and $s \in J$, \begin{align*}
%	(T_{u^{-1}}C_s)|_J = \delta_{u,1}C_s|_J.
%\end{align*}
%\end{example}}
\noindent
For $u \in W^J, w \in W$, define $h^J_{uv,w}(q)$ by \begin{equation}\label{eq:coeff}
	(T_{u^{-1}}C_w)|_J = \sum_{v \in W_J}^{} h^J_{uv,w}(q)C_v.
\end{equation} 

\noindent
Combining \eqref{eq:rescoeffgenT}, \eqref{eq:coeff} and \eqref{eq:KLpolydef} we get $$ h_{uy,w}(q) = \sum_{v \in W_J}^{} h^J_{uv,w}(q)h_{y,v}(q) \quad \hbox{ for } y \in W_J.$$ Since the matrix $(h_{y,v}(q))_{y,v \in W_J}$ is unitriangular, this shows that $$ h^J_{uv,w}(q) \in \ZZ[q]$$ for $u \in W^J, v \in W_J$ and $w \in W$. Note that this also implies $h^J_{uv,w}(q) = 0$ if $uv \nleq w$.

The following proposition gives a condition for vanishing of $ h^J_{uv,w}(q).$

\begin{proposition}
	Let $w \in W, u \in W^J, v \in W_J$ and $s \in J$ be such that $vs>v, ws<w$. Then $ h^J_{uv,w}(q) = 0.$
\end{proposition}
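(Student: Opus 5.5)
Since $ws<w$ and $s \in S$, the standard theory of the Kazhdan-Lusztig basis gives $C_w C_s = (q+q^{-1})C_w$. This is the key input; the vanishing will follow from right $\cH_J$-linearity of the restriction map together with the analogous eigenvector property inside $\cH_J$. In our normalization this identity can be read off from \eqref{eq:CwCs} or from the right-handed version of \autoref{prop:Casselmannrecq}: since $ws<w$ we have $h_{y,w} = q\,h_{ys,w}$ whenever $ys>y$. A direct computation of $C_wC_s$ with $T_yC_s = T_{ys} + q^{\pm1}T_y$ then confirms $C_wC_s=(q+q^{-1})C_w$.

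Multiply on the right by $C_s \in \cH_J$ (since $s \in J$), and apply \autoref{res:resH_Jlinear}:
\begin{equation*}
(q+q^{-1})(T_{u^{-1}}C_w)|_J = (T_{u^{-1}}C_wC_s)|_J = (T_{u^{-1}}C_w)|_J\, C_s .
\end{equation*}
Write $X := (T_{u^{-1}}C_w)|_J = \sum_{v \in W_J} h^J_{uv,w} C_v$. Then $X C_s = (q+q^{-1})X$ holds in $\cH_J$.

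It remains to see that any $X\in\cH_J$ with $XC_s=(q+q^{-1})X$ has zero coefficient on $C_v$ whenever $vs>v$. In $\cH_J$, use the multiplication rule for $C_vC_s$. If $vs<v$, then $C_vC_s=(q+q^{-1})C_v$. If $vs>v$, then by \eqref{eq:CwCs} $C_vC_s = C_{vs} + \sum_{z<v,\,zs<z}\mu(z,v)C_z$, and every term on the right is indexed by an element $x$ with $xs<x$. Hence the right ideal $\cH_J C_s$ is spanned by $\{C_x : xs<x\}$. Moreover, $C_s^2=(q+q^{-1})C_s$, so $XC_s=(q+q^{-1})X$ gives $X = (q+q^{-1})^{-1}XC_s$. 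Here the division happens over the fraction field, which is harmless since the $C_v$ remain a basis there. Consequently $X$ lies in the span of $\{C_x: xs<x\}$ over $\QQ(q)$, and by linear independence the coefficients $h^J_{uv,w}$ with $vs>v$ vanish.

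The only point needing care is the claim that $\{C_x : xs<x\}$ spans $\cH_J C_s$, specifically the triangularity argument showing that the $C_{vs}$ appearing for $vs>v$ do not reintroduce elements with $xs>x$. This is immediate from \eqref{eq:CwCs}, since $(vs)s=v<vs$ and the sum runs over $zs<z$. Everything else is formal.
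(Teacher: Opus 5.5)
Your proof is correct and takes essentially the same approach as the paper. You use $C_wC_s=(q+q^{-1})C_w$ and right $\cH_J$-linearity to get $XC_s=(q+q^{-1})X$, then expand each $C_{v'}C_s$ via \eqref{eq:CwCs} to see that only $C_x$ with $xs<x$ occur. The paper compares coefficients directly, using that $\ZZ[q^{\pm1}]$ is a domain, rather than passing to the fraction field; also, your remark about $C_s^2=(q+q^{-1})C_s$ is not actually needed.
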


\begin{proof}
	Since \begin{align*}
		&(q+q^{-1})(T_{u^{-1}}C_w)|_J = (T_{u^{-1}}C_w C_s)|_J = (T_{u^{-1}}C_w)|_J C_s = \sum_{v' \in W_J}^{} h^J_{uv',w}(q)C_{v'}C_s 
		\\
		&= \sum_{\substack{v' \in W_J \\ v's>v'}}^{} h^J_{uv',w}(q)\bigg( C_{v's} + \sum_{\substack{z<v' \\ zs<z}}^{} \mu(z,v')C_z \bigg) + \sum_{\substack{v' \in W_J \\ v's<v'}}^{} h^J_{uv',w}(q) \cdot (q+q^{-1})C_{v'},
	\end{align*} $(T_{u^{-1}}C_w)|_J$ only has those $C_v$ with $vs<v$ appearing. 
\end{proof}

%%%%%%%%%%%%%%%%%%%%%%%%%%%%%%%%%%%%%%%%%%%%%%%%%%%%%
	% \newpage
	\section{The hybrid basis}\label{sec:hybrid}
	This section defines hybrid basis of Hecke algebra $\cH$. Also we establish a connection between hybrid basis and restriction map in \autoref{res:CtoTCisres}. Furthermore, we show that the parabolic Kazhdan-Lusztig  polynomials are a special case of restriction coefficients.
	
%		\subsection{Definition of hybrid basis} 
		Let $J \subseteq S$ be a subset of the set $S$ of simple reflections. We recall the definition of \textit{hybrid basis} $TC^J:=\{TC^J_w: w \in W\}$ and $CT^{J}:=\{CT^J_w: w \in W\}$ of the Hecke algebra $\cH$ as defined by Grojnowski and Haiman in \cite{GrojnowskiHaiman}. Let $W_J$ be the parabolic subgroup of $W$ generated by $J$. Let $W^J$ be the minimal length left coset representatives in $W/W_J$ and $\JW$ the minimal length right coset representatives in $W_J\backslash W$. For $w \in W$, there are parabolic factorizations \begin{align*}
		w = w^J\cdot w_J \qquad \hbox{ and } \qquad w = _J\!\!w \cdot^{J}\!\! w 
	\end{align*} with $w_J, _J\!w \in W_J$ and $w^J \in W^J, ^{J}\!\! w \in \JW$. \cite[Proposition 2.4.4]{BjornerBrenti} says that such a factorization exists and is unique, with $\ell(w) = \ell(w^J) + \ell(w_J) = \ell(_Jw)+\ell(^{J\!} w)$. Define \begin{equation}
		TC^J_w = T_{w^J} C_{w_J} \qquad \hbox{ and } \qquad CT^J_w = C_{_Jw}T_{^{J\!} w}.
	\end{equation} 
On the extreme ends, we recover the standard and Kazhdan-Lusztig bases of $\cH$: when $J = \emptyset$, $TC^\emptyset_w = T_w$ and when $J = S$, $TC^S_w = C_w$. 	

	There exists a ring anti-automorphism $\Psi: \cH \rightarrow \cH$ such that $\Psi(T_w)=T_{w^{-1}}$ and $\Psi$ fixes $q$. Further, $\Psi$ commutes with bar involution and therefore, $\Psi(C_w)=C_{w^{-1}}$. Thus, \begin{equation}\label{eq:leftrightconv}
		\Psi(TC^J_w) = \Psi(C_{w_J}) \Psi(T_{w^J}) = C_{w_J^{-1}}T_{{(w^J)}^{-1}} = CT^J_{w^{-1}}.
	\end{equation} 
	Because of this, we will get similar results  for both types of hybrid bases. It will be enough to consider one or the other, depending on the context.  
	%We  can move between mostly focus on one or the other in the discussions below.

%	{\color{red}  \begin{remark} Relevance?
%		The definition of hybrid basis may be restated as $$ TC^J_w = TC^\emptyset_{w^J} TC^{S}_{w_J}.$$
%	\end{remark}}

	Now we show that $TC^J$ is a basis of $\cH$.
	
	\begin{proposition}
		Let $\cH_J$ be the Hecke algebra of the Coxeter subsystem $(W_J,J)$ generated by $J \subseteq S$. Let $\cB = \{b_w: w \in W_J\}$ be a basis for $\cH_J$. Then the set $\{T_ub_v \,|\, u \in W^J, v \in W_J \}$ is a basis for $\cH$.
	\end{proposition}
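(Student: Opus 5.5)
The plan is to reduce the statement to the standard basis via a change of basis inside $\cH_J$, then combine the left $\cH_J$-module structure of $\cH$ with the unique parabolic factorization. The ring $\ZZ[q^{\pm1}]$ may be an infinite-rank setting, since $W$ may be infinite. So "basis" will mean that every element of $\cH$ is a unique finite $\ZZ[q^{\pm1}]$-linear combination of the proposed elements.

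The first step is to show that $\{T_uT_v : u\in W^J, v\in W_J\}$ is a basis of $\cH$. By \cite[Proposition 2.4.4]{BjornerBrenti}, every $w\in W$ factors uniquely as $w=w^Jw_J$ with $\ell(w)=\ell(w^J)+\ell(w_J)$. Hence $T_w=T_{w^J}T_{w_J}$, because concatenating reduced words for $w^J$ and $w_J$ gives a reduced word for $w$. The map $(u,v)\mapsto uv$ is a bijection $W^J\times W_J\to W$, so $\{T_uT_v\}$ is precisely the standard basis $\{T_w\}$, reindexed.

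The second step is to expand the standard basis of $\cH_J$ in the given basis $\cB$. Write $T_v=\sum_{v'\in W_J} a_{v,v'}b_{v'}$, a finite sum, and $b_{v'}=\sum_{v''} c_{v',v''}T_{v''}$. The matrices $(a)$ and $(c)$ are mutually inverse, with only finitely many nonzero entries in each row. I will use that $\cH_J$ sits inside $\cH$ as the span of $\{T_v : v\in W_J\}$, a subalgebra since the relations match. Then for $u\in W^J$ we have $T_ub_{v'}=\sum_{v''} c_{v',v''}T_uT_{v''}$.

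Spanning follows from $T_w=T_{w^J}T_{w_J}=\sum_{v'}a_{w_J,v'}T_{w^J}b_{v'}$. For linear independence, suppose $\sum_{u,v'}\lambda_{u,v'}T_ub_{v'}=0$ as a finite sum. Grouping by $u$ gives $\sum_u T_u g_u=0$ with $g_u=\sum_{v'}\lambda_{u,v'}b_{v'}\in\cH_J$. Expanding each $g_u$ in the $T_v$ basis and applying the first step forces every $g_u=0$. Since $\cB$ is a basis of $\cH_J$, all $\lambda_{u,v'}=0$.

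An alternative route to independence is to use \autoref{lem:resorthogonality} together with \autoref{res:resH_Jlinear}: applying $(T_{u^{-1}}\,\cdot\,)|_J$ to the sum extracts $g_u$ directly. The main point of care is identifying $\cH_J$ with a subalgebra of $\cH$ and grouping by $u$ correctly; everything else is bookkeeping.
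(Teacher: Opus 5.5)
Your proposal is correct and follows the same route as the paper: write $T_w=T_{w^J}T_{w_J}$ using the unique parabolic factorization, then expand $T_{w_J}$ in the basis $\cB$ of $\cH_J$. Your grouping-by-$u$ argument for linear independence (or the alternative via \autoref{lem:resorthogonality}) spells out the uniqueness step, which the paper's proof only asserts.
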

	\begin{proof}
		To show that $\{T_ub_v \,|\, u \in W^J, v \in W_J \}$ is a basis, it is enough to show that each $T_w: w\in W$ can be uniquely written as a linear combination of $T_ub_v'$s. By uniqueness of parabolic factorization, we have $T_w = T_u T_v$ for unique $u \in W^J, v \in W_J$. Also, $T_v \in \cH_J$ and hence, can be expressed uniquely as a linear combination of $\{b_v: v \in W_J\}$.
			\end{proof}
		
%		By expanding each $b_v$ as a linear combination of $T_w: w\in W_J$, and using linear independence of $\{T_w: w\in W\}$, we see that the only linear dependence relation possible among the $T_ub_v$s for $u \in W^J, v \in W_J$ is when $u$ is fixed. But in that case a linear dependence would imply a linear dependence among the $b_v$s (as $T_u$ is invertible), which is also impossible, since $\cB$ is a basis. Thus the set $\{T_ub_v \,|\, u \in W^J, v \in W_J \}$ is linearly independent. 
%		By uniqueness of parabolic factorization any $T_w: w\in W$ is a linear combination of $\{T_ub_v: v \in W_J\}$ for a fixed $u \in W^J$, but that combination is unique since $\cB$ is a basis.

By \cite[Proposition 2.4.4, (2.11),(2.12)]{BjornerBrenti}, if $u \in W^J$ and $v \in W_J$ then $\ell(uv) = \ell(u)+\ell(v)$, so $T_{u}T_v = T_{uv}$. Since $\ell(uv) = \ell(u)+\ell(v)$, a reduced expression for $uv$ is obtained by concatenating a reduced expression for $u$ and a reduced expression for $v$. Then by subword criteria, if $x<v$ then  $ux < uv = w$. 
	Then \begin{align*}
		TC^J_w &= T_uC_v = T_u(T_v+ \sum_{x<v} h_{x,v}(q) T_x ) = T_{uv} + \sum_{x<v} h_{x,v}(q) T_{ux}
		\\
		&= T_{w} + \sum_{x<v} h_{x,v}(q) T_{ux},
	\end{align*} so $TC^J_w$ is unitriangular with respect to the Bruhat order when expressed in terms of the standard basis.

The following proposition explains a connection between $TC^J$-basis and the restriction map defined in Section~\ref{sec:res}.

	\begin{proposition}\label{res:CtoTCisres} 	For $u \in W^J$, $v \in W_J$ and $h \in \cH$, the coefficient of $TC^J_{u v}$ in the $TC^J$-expansion of $h$ equals the coefficient of $C_v$ in the Kazhdan-Lusztig -expansion of $(T_{u^{-1}}h)|_J$: \begin{equation}\label{eq:res:CtoTCisres}
			[TC^J_{u v}]h = [C_v](T_{u^{-1}}h)|_J.
		\end{equation} 
\end{proposition}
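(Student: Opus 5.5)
Both sides of \eqref{eq:res:CtoTCisres} are $\ZZ[q^{\pm1}]$-linear in $h$: the left side because taking a coefficient in a basis is linear, the right side because left multiplication by $T_{u^{-1}}$, the restriction map $|_J$, and taking a coefficient in the Kazhdan-Lusztig basis of $\cH_J$ are all linear. So it suffices to verify the identity on the basis $TC^J$ itself. We take $h = TC^J_{u'v'} = T_{u'}C_{v'}$ with $u' \in W^J$ and $v' \in W_J$. The left side is then $\delta_{u,u'}\delta_{v,v'}$ by definition of coordinates in the basis $TC^J$, which is a basis by the preceding proposition applied with $\cB = \{C_v : v \in W_J\}$.

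For the right side, we write $C_{v'} \in \cH_J$ and use the right $\cH_J$-linearity of \autoref{res:resH_Jlinear}:
\begin{equation*}
(T_{u^{-1}}T_{u'}C_{v'})|_J = (T_{u^{-1}}T_{u'})|_J \cdot C_{v'}.
\end{equation*}
By \autoref{lem:resorthogonality} this equals $\delta_{u,u'}C_{v'}$. Its coefficient on $C_v$ in the Kazhdan-Lusztig basis of $\cH_J$ is therefore $\delta_{u,u'}\delta_{v,v'}$, which matches the left side.

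Linearity then extends the identity to all $h \in \cH$. The only point needing care is that the maps involved are linear over $\ZZ[q^{\pm1}]$, not merely over $\ZZ$. This holds because $|_J$ is defined by linear extension from the standard basis. There is no real obstacle here: the substantive input is already contained in \autoref{lem:resorthogonality} and \autoref{res:resH_Jlinear}.
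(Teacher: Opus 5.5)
Your proposal is correct and follows essentially the same argument as the paper. The paper expands $h=\sum b_{u'v}(q)\,TC^J_{u'v}$ and applies the right $\cH_J$-linearity of $|_J$ together with \autoref{lem:resorthogonality} to get $(T_{u^{-1}}h)|_J=\sum_{v} b_{uv}(q)C_v$, which is exactly your basis-element computation extended by $\ZZ[q^{\pm1}]$-linearity.
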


\begin{proof}
	Suppose $$ h = \sum_{\substack{u'\in W^J \\ v \in W_J}}^{} b_{u'v}(q)TC^J_{u' v}.$$ Then using  $\cH_{J}$-linearity of the restriction map and \autoref{lem:resorthogonality}, \begin{align*}
		(T_{u^{-1}}h)|_J = \sum_{\substack{u' \in W^J \\ v \in W_J}}^{} b_{u' v}(q)(T_{u^{-1}}T_{u'}C_v)|_J = \sum_{v \in W_J}^{} b_{uv}(q)C_v.
	\end{align*} Thus, $[C_v](T_{u^{-1}}h)|_J = b_{uv}(q) = [TC^J_{uv}]h$.
\end{proof}

Below we recall the following result from~\cite[Corollary 3.9]{GrojnowskiHaiman}, which is the source of our main results.
	
	\begin{theorem}\label{th:GH}
Let $(W,S)$ be the Coxeter system of a Weyl group of a symmetrizable Kac-Moody algebra, and $\cH$ be its Hecke algebra and $J\subseteq S$. For $w,u\in W$, the product $C_w TC^J_u$ expands positively in the $TC^J$-basis:
		$$ C_w TC^J_u = \sum_{v \in W} d^w_{u,v}(q)TC^J_v$$
		with $d^w_{u,v}(q) \in \ZZ_{\geq 0}[q^{\pm 1}]$.
\end{theorem}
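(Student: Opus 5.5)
The plan is to prove this geometrically, following \cite{GrojnowskiHaiman}. We realize $\cH$ as the Grothendieck group of $B$-equivariant mixed (Tate) complexes on the Kac-Moody flag variety $X = G/B$. Since $X$ is an ind-variety, we work on finite unions of Schubert varieties throughout. Under this identification:
\begin{itemize}
\item $q$ corresponds to a half Tate twist together with a shift;
\item $T_w$ corresponds to the $!$-extension $j_{w!}\underline{\mathbb{Q}}[\ell(w)]$ of the constant sheaf on the cell $BwB/B$;
\item $C_w$ corresponds to the intersection cohomology complex $\mathrm{IC}_w$, which is where the Kazhdan--Lusztig theorem (positivity of $h_{x,w}$) comes from;
\item left multiplication by $C_w$ corresponds to convolution $\mathrm{IC}_w * (-)$ over $\overline{BwB}\times^B X \to X$.
\end{itemize}

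Next we give $TC^J_u$ a geometric avatar. Let $\pi: G/B\to G/P_J$ be the projection, and write $u = u^J u_J$. The preimage $Y_{u^J} = \pi^{-1}(Bu^JP_J/P_J)$ is a locally trivial bundle over an affine cell with fibre $P_J/B$, the flag variety of the Levi factor. On $Y_{u^J}$ we take the complex $\mathcal{F}_u$ that is, fibrewise, the IC sheaf of the Schubert variety $\overline{B_J u_J B}/B \subseteq P_J/B$, transported along the cell; concretely, it is the restriction of $\mathrm{IC}$ of $\overline{B u^J B}\,\overline{B u_J B}/B$ to $Y_{u^J}$, suitably shifted. Its extension by zero $j_!\mathcal{F}_u$ has class $T_{u^J}C_{u_J}$. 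Indeed, on each stratum the stalks compute $\sum_{x} h_{x,u_J}T_{u^Jx}$, matching the computation of $TC^J_w$ in the standard basis given earlier.

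The key claim is then the following. For any $B$-equivariant complex $\mathcal{K}$ whose $*$-restriction to every stratum $Y_{u'}$ ($u'\in W^J$) is pure, that restriction is, by the decomposition theorem on the fibre $P_J/B$ (combined with equivariance along the base cell), a direct sum of Tate-twisted and shifted copies of the complexes $\mathcal{F}_{u'v}$, $v\in W_J$. Hence $[\mathcal{K}]=\sum_{u',v} m_{u'v}(q)\,TC^J_{u'v}$, where each $m_{u'v}$ records multiplicities and therefore lies in $\ZZ_{\ge 0}[q^{\pm1}]$. It remains to apply this to $\mathcal{K}=\mathrm{IC}_w * j_!\mathcal{F}_u$.

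The main obstacle is proving stratumwise purity of this convolution. The difficulty is that $j_!\mathcal{F}_u$ is not pure, so the decomposition theorem cannot be applied to $\mathcal{K}$ directly. I would try three steps:
\begin{enumerate}
\item Write $\mathcal{K}=m_!(\mathrm{IC}_w\,\tilde\boxtimes\, j_!\mathcal{F}_u)$ with $m$ proper, and use base change to compute $\mathcal{K}|_{Y_{u'}}$ as a pushforward from $m^{-1}(Y_{u'})$.
\item Use a one-parameter subgroup of the torus that contracts the base cell of $Y_{u'}$ while preserving the fibre directions. By Braden's hyperbolic localization theorem (or the standard ``attracting cell'' argument), the relevant $*$-restrictions and $!$-restrictions coincide and preserve purity. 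This converts the problem into weight estimates for $!$- and $*$-pullbacks of the pure complexes $\mathrm{IC}_w$ and $\mathrm{IC}$ of $\overline{Bu^JB\,u_JB}$, both of which are pointwise pure.
\item Conclude that the restriction has weights bounded on both sides, hence is pure.
\end{enumerate}
Equivalently, one can show that $\mathcal{K}$ is a direct summand of a complex built by iterated convolutions of pure objects with $\pi^*\pi_*$-type functors, whose restrictions to the $P_J$-strata are manifestly pure. Once purity is in hand, the earlier paragraph gives $d^w_{u,v}(q)\in\ZZ_{\ge0}[q^{\pm1}]$.
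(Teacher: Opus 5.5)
\textbf{Verdict: there is a genuine gap.} Your setup is sound. The object $j_!\mathcal{F}_u$ has class $TC^J_u$, and your identification with $\mathrm{IC}_u|_{Y_{u^J}}$ is right, since $Y_{u^J}\cap\overline{BuB}/B$ is open in $\overline{BuB}/B$. Your criterion is also right: if every $*$-restriction $\mathcal{K}|_{Y_{u'}}$ is pure of weight $0$, then $[\mathcal{K}]$ is $TC^J$-positive. (You need a fixed even weight here; "pure" alone could introduce signs.)

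The problem is that the argument stops exactly where the content of the theorem lies. For $\mathcal{K}=\mathrm{IC}_w*j_!\mathcal{F}_u$:
\begin{enumerate}
\item The bound ``weights $\le 0$'' on $\mathcal{K}|_{Y_{u'}}$ is automatic, since $j_!$, $m_!$ and $i^*$ preserve it.
\item The bound ``weights $\ge 0$'' is the whole point, and your step~2 does not give it. The cocharacter contracting $Y_{u'}$ onto $u'P_J/B$ does identify $*$-restriction to $u'P_J/B$ with a hyperbolic localization. But Braden's theorem yields a pure output only for a \emph{pure} input.
\item That is precisely the argument of Section~\ref{sec:posGH} (up to exchanging left and right cosets), applied to $\mathcal{K}=\mathrm{IC}_w$. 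This is the case where $u$ is the identity, which is all the paper proves geometrically and all it uses; the general statement is quoted from Grojnowski--Haiman.
\item For general $u$ your $\mathcal{K}$ is not pure, because of the $!$-extension. Pointwise purity of $\mathrm{IC}_w$ and $\mathrm{IC}_u$ gives no lower weight bound on a $!$-extension.
\item Moving to the convolution space does not help either. The relevant piece $\overline{BwB}\times^B Y_{u^J}$ is not closed, so neither properness of the restricted multiplication map nor compatibility of hyperbolic localization with $j_!$ is available.
\end{enumerate}
So step~3, and the closing ``equivalently'' sentence, assert the missing purity rather than prove it.

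One way to close the gap, in the spirit of your last sentence, runs as follows.
\begin{enumerate}
\item Let $\mathcal{C}$ be the class of $B$-equivariant mixed Tate complexes whose $*$-restriction to every $Y_{u'}$ is pure of weight $0$. Weights $\le 0$ and $\ge 0$ are each stable under extensions and direct summands, so $\mathcal{C}$ is as well.
\item Filtering by strata and by perverse cohomology, every object of $\mathcal{C}$ is an iterated extension of objects $\mathcal{T}_{u''}*\mathrm{IC}_v(\tfrac n2)[n]$ with $u''\in W^J$ and $v\in W_J$. Here $\mathcal{T}_{u''}=j_{u''!}\underline{\QQ}[\ell(u'')](\tfrac{\ell(u'')}{2})$ is the standard object.
\item Hence $\mathrm{IC}_s*(-)$ preserves $\mathcal{C}$ as soon as $\mathrm{IC}_s*\mathcal{T}_{u''}*\mathrm{IC}_v\in\mathcal{C}$. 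This follows from Deodhar's lemma, case by case:
\begin{itemize}
\item If $su''<u''$, or if $su''>u''$ with $su''\in W^J$: $\mathrm{IC}_s*\mathcal{T}_{u''}$ is supported on $Bsu''B/B\sqcup Bu''B/B$. These lie in the distinct strata $Y_{su''}$ and $Y_{u''}$, and the restrictions are constant of weight $0$. This is the sheaf version of $C_sT_{u''}=T_{su''}+q^{\mp1}T_{u''}$.
\item If $su''=u''t$ with $t\in J$: then $\mathrm{IC}_s*\mathcal{T}_{u''}\cong\mathcal{T}_{u''}*\mathrm{IC}_t$, and $\mathrm{IC}_t*\mathrm{IC}_v$ is pure on $P_J/B$ by the decomposition theorem.
\end{itemize}
\item Finally, $\mathrm{IC}_w$ is a direct summand of $\mathrm{IC}_{s_1}*\cdots*\mathrm{IC}_{s_k}$ for a reduced word of $w$. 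Hence $\mathrm{IC}_w*j_!\mathcal{F}_u\in\mathcal{C}$.
\end{enumerate}
The summand step is where geometry is genuinely needed. That each $C_s$ preserves the $TC^J$-positive cone is an easy algebraic check, but positivity does not pass algebraically from $C_{s_1}\cdots C_{s_k}$ to $C_w$.
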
	

	\section{Proofs of main results}\label{sec:pos}
	In this section, we give a proof of \autoref{thm:mainB} (restated as  \autoref{th:mainpos}) and a proof of \autoref{thm:mainA} (restated as \autoref{th:resmainthm}). 
	\begin{theorem}\label{th:mainpos}
		Let $(W,S)$ be the Coxeter system of a Weyl group of a symmetrizable Kac-Moody algebra, and $\cH$ be its Hecke algebra. If $I \subseteq J \subseteq S$, then for $w \in W$, $TC^J_w$ expands positively in $TC^I$-basis: \begin{equation}\label{eq:hIJdef}
			TC^J_w = \sum_{x \in W}^{} h^{I,J}_{x,w}(q) TC^I_x,
		\end{equation} with coefficients $h^{I,J}_{x,w}(q) \in \ZZ_{\geq 0}[q]$.
	\end{theorem}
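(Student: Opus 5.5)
The proof reduces the general change of basis to a single coset of $W_J$, expands that piece inside $\cH_J$, and then applies \autoref{th:GH} to the smaller parabolic subgroup.

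\textbf{Step 1: reduce to $\cH_J$.} Write $w = uv$ with $u = w^J \in W^J$ and $v = w_J \in W_J$, so that $TC^J_w = T_u C_v$. The first task is to expand $C_v$, viewed as an element of $\cH_J$, in the basis $TC^I$ of $\cH_J$ attached to the Coxeter system $(W_J, I)$. For $y \in W_J$, the parabolic factorization $y = y^I y_I$ taken in $W_J$ agrees with the one taken in $W$. Indeed, $y_I \in W_I$, and $y^I$ is minimal in its coset $yW_I$; this minimality does not depend on whether we regard $y$ as an element of $W_J$ or of $W$. Hence $TC^I_y$ means the same thing in $\cH_J$ and in $\cH$.

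\textbf{Step 2: positivity inside $\cH_J$.} Apply \autoref{th:GH} to $W_J$, which is again the Weyl group of a symmetrizable Kac--Moody algebra (take the principal submatrix of the Cartan matrix), with $u = 1$. Since $TC^I_1 = 1$, this gives
\begin{equation*}
C_v = \sum_{y \in W_J} d^v_{1,y}(q)\, TC^I_y, \qquad d^v_{1,y}(q) \in \ZZ_{\geq 0}[q^{\pm 1}].
\end{equation*}
To see that these coefficients lie in $\ZZ[q]$, use \autoref{res:CtoTCisres} inside $\cH_J$:
\begin{equation*}
d^v_{1,y} = [C_{y_I}]\bigl(T_{(y^I)^{-1}} C_v\bigr)|_I = h^I_{y,v}(q),
\end{equation*}
and the discussion after \eqref{eq:coeff} already shows $h^I_{y,v}(q) \in \ZZ[q]$. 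So $d^v_{1,y} \in \ZZ_{\geq 0}[q]$.

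\textbf{Step 3: multiply by $T_u$.} For $y \in W_J$, write $y = y^I y_I$. Then
\begin{equation*}
T_u\, TC^I_y = T_u T_{y^I} C_{y_I} = T_{u y^I} C_{y_I}.
\end{equation*}
The product $T_u T_{y^I}$ equals $T_{uy^I}$ because $u \in W^J$ and $y^I \in W_J$, so lengths add.

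\textbf{Step 4: check that $T_{u y^I} C_{y_I}$ is a basis element.} It remains to show $u y^I \in W^I$, so that $T_{uy^I} C_{y_I} = TC^I_{u y}$. Take $s \in I$. We have $\ell(u y^I s) = \ell(u) + \ell(y^I s)$, since $y^I s \in W_J$ and $u \in W^J$. Moreover $\ell(y^I s) = \ell(y^I) + 1$ because $y^I \in W^I$. Hence $\ell(u y^I s) > \ell(u y^I)$ for every $s \in I$, i.e.\ $u y^I$ is minimal in its left $W_I$-coset.

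\textbf{Conclusion.} Combining the steps,
\begin{equation*}
TC^J_w = \sum_{y \in W_J} d^v_{1,y}(q)\, TC^I_{u y}.
\end{equation*}
Thus $h^{I,J}_{x,w}$ equals $d^{w_J}_{1,y}$ when $x = w^J y$ with $y \in W_J$, and is $0$ otherwise. In particular every $h^{I,J}_{x,w}$ lies in $\ZZ_{\geq 0}[q]$.

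\textbf{Main obstacle.} The delicate point is the legitimacy of applying \autoref{th:GH} to $W_J$: one must confirm that it is a Kac--Moody Weyl group and that its hybrid bases are compatible with those of $W$, which is the content of Steps 1 and 4. If one prefers to avoid the subgroup, an alternative is to apply \autoref{th:GH} in $W$ itself, writing $C_v = C_v\, TC^I_1$. Here Step 2 then shows the coefficients are polynomials, via \autoref{res:CtoTCisres}. In that version one must additionally check that the coefficients are supported on $W_J$, which follows from unitriangularity and Bruhat support.
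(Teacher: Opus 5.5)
Your proof is correct and is essentially the paper's argument. You factor $w = uv$, expand $C_v$ in the $TC^I$-basis with coefficients $h^I_{y,v}(q) \in \ZZ[q]$, transport by $T_u$ using exactly the content of \autoref{lem:uz} and \autoref{lem:TuTCzI}, and reduce to $[TC^I_{uy}]TC^J_{uv} = [TC^I_y]C_v$, whose positivity comes from \autoref{th:GH}. The only difference is that you apply \autoref{th:GH} to the Kac--Moody Weyl group $W_J$ (hence your principal-submatrix remark), whereas the paper applies it in $W$ itself and uses the Bruhat support $y \leq v$ to confine the sum to $W_J$; this is precisely the alternative you sketch at the end.
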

We defer giving a proof of the above theorem until the end of this section. Now we state and prove its corollary.
	
	\begin{corollary}\label{th:resmainthm}
		Let $(W,S)$ be the Coxeter system of a Weyl group of a symmetrizable Kac-Moody algebra, and $\cH$ be its Hecke algebra. Let $J \subseteq S$ and $\cH_J$ be the Hecke algebra of the Coxeter system $(W_J,J)$. For $u \in W^J$ and $w \in W$, the element $(T_{u^{-1}}C_w)|_J \in \cH_J$ when expressed in the Kazhdan-Lusztig  basis of $\cH_J$ has coefficients in $\ZZ_{\geq 0}[q]$: \begin{equation}
			(T_{u^{-1}}C_w)|_J = \sum_{v \in W_J}^{} h^J_{uv,w}(q)C_v,
		\end{equation} with $h^J_{uv,w}(q) \in \ZZ_{\geq 0}[q]$.
	\end{corollary}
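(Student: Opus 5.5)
This follows from \autoref{th:mainpos} applied with the pair $J \subseteq S$ (so $I=J$, $J=S$ in that theorem), translated via \autoref{res:CtoTCisres}. Since $TC^S_w = C_w$, \autoref{th:mainpos} gives
\begin{equation*}
C_w = \sum_{x\in W} h^{J,S}_{x,w}(q)\, TC^J_x, \qquad h^{J,S}_{x,w}(q)\in \ZZ_{\geq 0}[q].
\end{equation*}
In other words, the coefficient of $TC^J_{uv}$ (for $u\in W^J$, $v\in W_J$) in the $TC^J$-expansion of $C_w$ is a polynomial in $q$ with nonnegative coefficients.

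Next apply \autoref{res:CtoTCisres} with $h=C_w$. It gives
\begin{equation*}
[TC^J_{uv}]C_w = [C_v](T_{u^{-1}}C_w)|_J .
\end{equation*}
By the definition \eqref{eq:coeff}, the right side is exactly $h^J_{uv,w}(q)$. Hence $h^J_{uv,w}(q)=h^{J,S}_{uv,w}(q)\in\ZZ_{\geq 0}[q]$.

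The only check needed is that the indexing matches. Every $x\in W$ factors uniquely as $x=uv$ with $u\in W^J$, $v\in W_J$, and $TC^J_x=T_uC_v$. So, for fixed $u$, the $TC^J$-coefficients indexed by $x=uv$ correspond bijectively to the KL coefficients of $(T_{u^{-1}}C_w)|_J$ in $\cH_J$.

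There is no real obstacle in this corollary itself. All of the positivity input sits in \autoref{th:mainpos}, whose proof is deferred and ultimately rests on \autoref{th:GH}. The corollary is purely the dictionary between the $TC^J$-coefficients and the restriction coefficients, as was already noted in the introduction.
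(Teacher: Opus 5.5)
Your proposal is correct and is essentially the paper's own proof: the paper also uses \autoref{res:CtoTCisres} to identify $h^J_{uv,w}(q)=[C_v](T_{u^{-1}}C_w)|_J=[TC^J_{uv}]C_w=h^{J,S}_{uv,w}(q)$, and then takes positivity from \autoref{th:mainpos} (ultimately from \autoref{th:GH}). There is no circularity in citing \autoref{th:mainpos}, because its proof borrows only the identity \eqref{eq:hJeqhJS} from this corollary, not its positivity.
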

	
	\begin{proof}
		By \autoref{res:CtoTCisres},  \begin{equation}\label{eq:hJeqhJS}
			h^J_{uv,w}(q) = [C_v](T_{u^{-1}}C_w)|_J = [TC^J_{uv}]C_w = h^{J,S}_{uv,w}(q).
		\end{equation}
	%	the result follows \cite[Theorem 3.2]{GrojnowskiHaiman}.
		
		%as the last polynomial is equal to $c^{w}_{uv}(q)$ in \cite[Theorem 3.2]{GrojnowskiHaiman}
		\end{proof}
	%\subsection{Examples}
Below we give two examples illustrating \autoref{th:mainpos}.
	\begin{example}
		When $J = \{1\}$ and $I = \emptyset$, then $W_J = \{1,s_1\}$, $W^J = \{u\in W \, |\, us_1 > u \}$. If $w \in W$, $ws_1 < w$ then \begin{align*}
			TC^{[1]}_w = T_{ws_1}C_{s_1} = T_{ws_1}(T_{s_1} + q) = T_{w} + qT_{ws_1}.
		\end{align*} If $w \in W$, $ws_1 > w$, then $TC^{[1]}_w = T_w.$
	\end{example}
	
	%When $J = \{1\}$ and $I = \emptyset$, then $W_J = \{1,s_1\}$, $W^J = \{u\in W \, |\, us_1 > u \}$. If $w \in W$, $ws_1 < w$ then \begin{align*}
		%	TC^{[1]}_w = T_{ws_1}C_{s_1} = T_{ws_1}(T_{s_1} + q) = T_{w} + qT_{ws_1}.
		%\end{align*} If $w \in W$, $ws_1 > w$, then $TC^{[1]}_w = T_w.$
		
		\begin{example}
			For type A, $J = \{1,2\}$ and $I = \{1\}$, the parabolic factorizations for $I = \{1\}$ are \begin{align*}
				1 = 1\cdot 1, s_1 = 1\cdot s_1, s_2 = s_2 \cdot 1, s_1s_2 = s_1s_2\cdot 1, s_2s_1 = s_2 \cdot s_1, s_1s_2s_1 = s_1s_2 \cdot s_1.
			\end{align*}
			
			Let $u \in W^{\{1,2\}}$. Then \begin{align*}
				&TC^{\{1,2\}}_{u} = T_u = TC^{\{1\}}_u,
				\\
				&TC^{\{1,2\}}_{us_1} = T_uC_{s_1} = TC^{\{1\}}_{us_1},
				\\
				&TC^{\{1,2\}}_{us_2} = T_uC_{s_2} = T_{us_2}+qT_u = TC^{\{1\}}_{us_2}+qTC^{\{1\}}_u,
				\\
				&TC^{\{1,2\}}_{us_1s_2} = T_uC_{s_1}C_{s_2} = T_{us_1s_2}+q(T_{us_1}+T_{us_2})+q^2T_u = TC^{\{1\}}_{us_1s_2}+qTC^{\{1\}}_{us_2}+qTC^{\{1\}}_{us_1},
				\\
				&TC^{\{1,2\}}_{us_2s_1} = T_uC_{s_2}C_{s_1} = T_u(T_{s_2}+q)C_{s_1} = T_{us_2}C_{s_1}+qT_{u}C_{s_1} = TC^{\{1\}}_{us_2s_1}+qTC^{\{1\}}_{us_1},
				\\
				&TC^{\{1,2\}}_{us_2s_1s_2} =  T_{us_1s_2s_1}+q(T_{us_1s_2}+T_{us_2s_1})+q^2(T_{us_1}+T_{us_2})+q^3T_u
				\\
				&\qquad\qquad = TC^{\{1\}}_{us_1s_2s_1}+qTC^{\{1\}}_{us_2s_1}+q^2TC^{\{1\}}_{us_1},
			\end{align*}
			since \begin{align*}
				&TC^{\{1\}}_{us_1} = T_uC_{s_1} = T_{us_1} + qT_{u},
				\\
				&TC^{\{1\}}_{us_1s_2s_1} = T_{us_1s_2}C_{s_1} = T_{us_1s_2s_1}+qT_{us_1s_2},
				\\
				&TC^{\{1\}}_{us_2s_1} = T_{us_2}C_{s_1} = T_{us_2s_1}+qT_{us_2}.
			\end{align*}
		\end{example}
		
			%\subsection{} 
			Now we prove two lemmas which will lead us to a proof of \autoref{th:mainpos}.
		
		\begin{lemma}\label{lem:uz}
			Let $I \subseteq J \subseteq S$ and $u \in W^J, z \in W^I \cap W_J$. Then $uz \in W^I$.
		\end{lemma}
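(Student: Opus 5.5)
We will use the descent characterization of minimal coset representatives: $x \in W^I$ if and only if $xs > x$ for every $s \in I$, i.e. $\ell(xs) = \ell(x)+1$ for all $s \in I$ (see \cite[Proposition 2.4.4]{BjornerBrenti}). So it suffices to fix $s \in I$ and show $\ell(uzs) > \ell(uz)$.

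The key input is the length additivity already recalled in Section~\ref{sec:hybrid}: for $u \in W^J$ and any $y \in W_J$ we have $\ell(uy) = \ell(u) + \ell(y)$. We apply it twice. First, since $z \in W_J$, we get $\ell(uz) = \ell(u) + \ell(z)$. Second, since $I \subseteq J$, we have $s \in J$, so $zs \in W_J$, and therefore $\ell(uzs) = \ell(u) + \ell(zs)$.

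It remains to compare $\ell(zs)$ and $\ell(z)$. Since $z \in W^I$ and $s \in I$, the descent characterization gives $\ell(zs) = \ell(z) + 1$. Combining,
\begin{equation*}
\ell(uzs) = \ell(u) + \ell(zs) = \ell(u) + \ell(z) + 1 = \ell(uz) + 1,
\end{equation*}
so $uzs > uz$ for every $s \in I$, whence $uz \in W^I$.

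The main point to check is that the characterization of $W^I$ by right descents is valid in the infinite Kac--Moody setting, and that the length-additivity statement holds for all $y \in W_J$, not just for $y = w_J$ arising from a parabolic factorization. Both are general Coxeter-group facts from \cite[Section 2.4]{BjornerBrenti}, so no real obstacle is expected.
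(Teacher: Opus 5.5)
Your proposal is correct and matches the paper's proof: fix $s \in I$, use length additivity $\ell(uy)=\ell(u)+\ell(y)$ for $u \in W^J$ and $y \in W_J$ (applied to $y=z$ and $y=zs$), and then use $\ell(zs)=\ell(z)+1$ because $z \in W^I$. Your worry about length additivity for arbitrary $y \in W_J$ is resolved by the uniqueness of the parabolic factorization in \cite[Proposition 2.4.4]{BjornerBrenti}, which gives $w^J=u$ and $w_J=y$ for $w=uy$.
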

		
		\begin{proof}
			We must show  that
			 $\ell(uzs)>\ell(uz) \text{ for every }s\in I.$
			
			Fix \(s\in I\). Since $z \in W_J$ and $s \in W_J$, $zs \in W_J$. Because $u \in W^J$ and $zs \in W_J$, $$\ell(uzs) = \ell(u) + \ell(zs).$$  
			Since \(z\in W^{I}\), no simple reflection in \(I\) is a right descent of \(z\); hence  
			\[\ell(zs)=\ell(z)+1.\]
			Also $u \in W^J$ and $z \in W_J$ imply that $ \ell(u) + \ell(z) = \ell(uz).$
			Therefore  
			\[\ell(uzs)=\ell(u)+\ell(zs)=\ell(u)+\ell(z)+1=\ell(uz)+1>\ell(uz).\]
			%As \(s\in I\) is arbitrary, \(uz\) satisfies the defining condition for \(W^{I}\). 
			\end{proof}
		
		\begin{lemma}\label{lem:TuTCzI}
			If $u \in W^J, z \in W_J$ and $I \subseteq J$ then $T_u TC^I_z = TC^{I}_{uz}$
		\end{lemma}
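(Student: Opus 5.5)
The plan is to unwind the definition of $TC^I$ on both sides and compare them using the parabolic factorization with respect to $I$.

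First, write the $I$-factorization of $z$. Since $z \in W_J$ and $I \subseteq J$, we have $z = z^I z_I$ with $z_I \in W_I \subseteq W_J$. Then $z^I = z z_I^{-1} \in W_J$ as well, so $z^I \in W^I \cap W_J$. By definition,
\[
TC^I_z = T_{z^I} C_{z_I}, \qquad\text{so}\qquad T_u TC^I_z = T_u T_{z^I} C_{z_I}.
\]

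Next, merge the two standard basis elements. Because $u \in W^J$ and $z^I \in W_J$, the length additivity recalled after the basis proposition (from \cite[Proposition 2.4.4]{BjornerBrenti}) gives $\ell(u z^I) = \ell(u) + \ell(z^I)$. Hence $T_u T_{z^I} = T_{u z^I}$, and
\[
T_u TC^I_z = T_{u z^I} C_{z_I}.
\]

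It remains to identify the $I$-factorization of $uz$. Write $uz = (u z^I)\, z_I$ with $z_I \in W_I$. By \autoref{lem:uz}, applied to $u \in W^J$ and $z^I \in W^I \cap W_J$, the element $u z^I$ lies in $W^I$. Uniqueness of the parabolic factorization then gives $(uz)^I = u z^I$ and $(uz)_I = z_I$. Therefore
\[
TC^I_{uz} = T_{(uz)^I} C_{(uz)_I} = T_{u z^I} C_{z_I},
\]
which matches the expression obtained above.

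There is no real obstacle here. The only point needing care is checking that $z^I$ lies in $W_J$, so that both \autoref{lem:uz} and the length additivity for $u \in W^J$ apply. This follows from $z_I \in W_I \subseteq W_J$.
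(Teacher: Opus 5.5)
Your proof is correct and follows essentially the same route as the paper: you factor $z = z^I z_I$, observe $z^I \in W^I \cap W_J$, apply \autoref{lem:uz} to get $uz = (uz^I)\,z_I$ as the $I$-factorization of $uz$, and use $\ell(uz^I) = \ell(u) + \ell(z^I)$ to write $T_u T_{z^I} = T_{uz^I}$. Your explicit justification that $z^I = z z_I^{-1} \in W_J$ fills in a step the paper only asserts.
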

		\begin{proof}
		
			Suppose $z = z^I \cdot z_I$ is the parabolic factorization of $z$ with respect to $I$. Then $z^I \in W^I \cap W_J$. So by \autoref{lem:uz}, $uz^I \in W^I$. Hence, $uz = uz^I \cdot z_I$ is a parabolic factorization of $uz$ with respect to $I$. Also $u \in W^J$ and $z^I \in W_J$ imply that $\ell(uz^I) = \ell(u)+\ell(z^I)$.	Then $TC^I_{uz} = T_{uz^I} C_{z_I} = T_u T_{z^I} C_{z_I}=T_uTC^I_z$.
		\end{proof}
	
	\begin{proof}[Proof of \autoref{th:mainpos}]	
		Suppose $w = uv$ with $u \in W^{J}$ and $v \in W_{J}$. Then $TC^{J}_{w} = T_u C_v$. 
		Using \eqref{eq:hIJdef} and \eqref{eq:hJeqhJS} write $$ C_v = \sum_{x \in W}^{} h^I_{x,v}(q) TC^I_x.$$  Since $v \in W_{J}$ and the sum can only run over $x \leq v$ in Bruhat order, the sum must be over $x \in W_{J}$. Then $$TC^{J}_{w} = T_u C_v =  T_u\sum_{x \in W}^{} h^I_{x,v}(q) TC^I_x = \sum_{x \in W_{J}} h^I_{x,v}(q) TC^{I}_{ux}$$ by \autoref{lem:TuTCzI}.
		Therefore,
		
\begin{equation}\label{eq:coeffreductiontoCtoTC}
			[TC^{I}_{uv}]TC^{J}_{u'v'} = [TC^{I}_v]C_{v'} \delta_{u,u'}  \quad \hbox{ for } u,u' \in W^J,\ v,v' \in W_J.\end{equation}
		% where we use the notation $[TC^I_{x}]h$ to mean the coefficient of $TC^I_x$ in the expansion of $h \in \cH$ in the $TC^I$-basis.
By \autoref{th:GH}, we have $[TC^{I}_v]C_{v'} \in \ZZ_{\geq 0}[q]$. This proves \autoref{th:mainpos}. %as a special case of their general result.
\end{proof}		

In fact, the proofs of \autoref{th:mainpos} and \autoref{th:resmainthm}  imply that  \autoref{th:mainpos} and \autoref{th:resmainthm} are  equivalent.
		\begin{remark}
			Since \autoref{th:GH} deals only with Weyl group of a symmetrizable Kac-Moody algebra, we cannot say anything right away about general Coxeter groups from this proof.
		\end{remark}
Following ideas from \cite{GrojnowskiHaiman}, we give a geometric proof of \autoref{th:resmainthm} in Section \ref{sec:posGH}. % with some minor modifications.

\section{Parabolic Kazhdan-Lusztig polynomials}\label{sec:parKL}

In this section we show that the parabolic Kazhdan-Lusztig polynomials associated to sign representations are special cases of restriction coefficients.
These polynomials were defined by Deodhar in~\cite{Deodhar}, below we recall them using normalization from~\cite{Soergel}. As shown in \cite{GrojnowskiHaiman}, the parabolic Kazhdan-Lusztig positivity can be derived from their main result \autoref{th:GH}. In fact, it may be deduced from our weaker statement \autoref{th:mainpos} or \autoref{th:resmainthm} as we will show below that the parabolic Kazhdan-Lusztig polynomials are special cases of restriction coefficients. 

Consider the sign representation of $\cH_J$: $$\epsilon(T_w)=(-q)^{\ell(w)} \text{ for } w\in W_J.$$
Then the induced representation $\text{Ind}_{\cH_J}^{\cH}(\epsilon)$ is isomorphic to the $\cH$-module $\cH e_J^{-}$, where $e_J^{-}$ satisfies 
\begin{equation}\label{eq:Ts}
	(T_s+q)e_J^{-}=0, \text{ for } s\in J.
\end{equation}
The sets $\{T_u e_J^{-}\mid u\in W^J\}$ and $\{C_u e_J^{-}\mid u\in W^J\}$ are bases of 
module $\cH e_J^{-}$. The parabolic Kazhdan-Lusztig polynomials $P^{J^-}_{u,u'}$ for $u,u' \in W^J$ are defined to be the change of basis coefficients 
$$C_{u'} e_J^{-} = \sum_{u\in W^J} P^{J^{-}}_{u,u'}(q)  T_u e_J^{-}.$$
%where $P^{J^{-}}_{u,w}(q) $ are called parabolic KL polynomials and  they have the following properties:
%\begin{enumerate}[(a)]
%	\item $P^{J^{-}}_{u,w}(q) = 0$ if $u \nleq w$, and $P^{J^{-}}_{w,w}(q) = 1$,
%	\item $P^{J^{-}}_{u,w}(q) \in q\ZZ[q]$ for $u<w$.
%\end{enumerate} 
\begin{lemma} For $v \in W_J\setminus \{1\}$,
	\begin{equation}\label{eq:pC}
		C_{v} e_{J}^-= 0. %\qquad \hbox{ for }  v \in W_J\setminus \{1\}. 
\end{equation}
\end{lemma}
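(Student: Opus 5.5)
We need to show that $C_v e_J^- = 0$ for $v\in W_J$ with $v\neq 1$. The plan is to pass through a simple reflection $s\in J$ lying in the right descent set of $v$, and to reduce to the base case $C_s e_J^- = 0$.

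First, the base case. For $s\in J$ we have $C_s = T_s + q$, so $C_s e_J^- = (T_s+q)e_J^- = 0$ by \eqref{eq:Ts}.

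Next, the factorization. Let $v\in W_J\setminus\{1\}$ and pick $s\in J$ with $vs<v$; such an $s$ exists because $v\neq 1$ and every reduced expression of $v$ uses only letters from $J$. The standard fact we want is that $C_v$ is right divisible by $C_s$ inside $\cH_J$, i.e. $C_v = h\,C_s$ for some $h\in\cH_J$. This gives $C_v e_J^- = h\,C_s e_J^- = 0$.

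To justify the divisibility without quoting external results, we argue by induction on $\ell(v)$ using the recursion \eqref{eq:CwCs}, applied in $\cH_J$. This is legitimate because Kazhdan--Lusztig elements of $W_J$ computed in $\cH_J$ coincide with those in $\cH$: they are characterized by the same conditions and the Bruhat order restricts. Write $v = v's$ with $v's>v'$, so that
\[ C_v = C_{v'}C_s - \sum_{\substack{z<v'\\ zs<z}} \mu(z,v')\,C_z . \]
The first term is visibly right divisible by $C_s$. Each $z$ in the sum satisfies $z\in W_J$, $zs<z$ and $\ell(z)<\ell(v)$. By the induction hypothesis, applied to the statement ``$C_z\in \cH_J C_s$ whenever $zs<z$'', each such $C_z$ is right divisible by $C_s$. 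The base of this induction is $v=s$, which is trivial.

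There is a cleaner alternative that avoids the divisibility argument: induct directly on the statement $C_v e_J^-=0$. This also handles the remaining terms $C_z e_J^-$, since each $z$ is $\neq 1$; indeed $zs<z$ forces $z\neq 1$. In this version the first term vanishes as $C_{v'}C_s e_J^- = 0$, and the remaining terms vanish by induction. I would present it this way, as strong induction on $\ell(v)$ for all $v\neq 1$ in $W_J$.

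The only point needing care is confirming that the recursion \eqref{eq:CwCs} holds in $\cH_J$ with indices in $W_J$, which follows from the compatibility of KL bases under the inclusion $\cH_J\subseteq\cH$. I do not expect a genuine obstacle.
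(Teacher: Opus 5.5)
Your proposal is correct, and the version you say you would present (strong induction on $\ell(v)$ via the recursion \eqref{eq:CwCs} with $v=v's$, $s\in J$, base case $C_se_J^-=(T_s+q)e_J^-=0$, and the observation that each $z$ in the correction sum is $\neq 1$ because $zs<z$) is exactly the paper's argument. The divisibility route $C_v\in\cH_J C_s$ is also valid but unnecessary. The compatibility of Kazhdan--Lusztig elements under $\cH_J\subseteq\cH$ that you flag is likewise used implicitly in the paper.
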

\begin{proof}
	Since for $s \in S$, $C_s = T_s+q$, \eqref{eq:Ts} says that $$C_s e_J^- = 0 \qquad \hbox{ for } s \in J.$$ Suppose that $C_z e_J^- = 0$ for $z\leq v \in W_J$ with $z \neq 1$. By \eqref{eq:CwCs} \begin{align*}
		C_{vs}e_J^- = C_vC_se_J^- - \sum_{\substack{z<v \\ zs<z}}^{} \mu(z,v) C_ze_J^- = 0
	\end{align*} since all $z \neq 1$ in the last sum.
\end{proof}

\noindent
Then for $w \in W$, \begin{equation}\label{eq:pTC}
	TC_{w} e_{J}^-=\begin{cases}
		T_{w}e_{J}^- &\text{ if } w\in W^J,\\
		0 & \text{ otherwise}.
	\end{cases}
\end{equation}
%
%where the first case is immediate and the second case can be proved using induction as follows. Let $w'\notin W^J$ and let $w'=u\cdot v$ be its parabolic factorization. If $v=s\in J$, then $T_{w'} e_{J}^- = T_u C_se_{J^-}=0$, due to~\eqref{eq:Ts}. For $v\in W_J$ and $s\in J$ such that $vs>v$, we can use recursion~\eqref{eq:recursion} to conclude that $T_u C_{vs}e_{J^-}=0$

\noindent
By \eqref{eq:hIJdef} and \eqref{eq:hJeqhJS} $$ C_w = \sum_{x \in W}^{} h^J_{x,w}(q) TC^J_w\,.$$ Then applying $e_J^-$ to both sides and using~\eqref{eq:pTC}, we get that \begin{equation}\label{eq:Pmin=h}
	P^{J^-}_{u,u'}(q) = h^J_{u,u'}(q) \qquad \hbox{ for } u,u' \in W^J.
\end{equation}  Thus, the parabolic Kazhdan-Lusztig polynomials are special cases of restriction coefficients.

% \newpage
		
\section{Examples of restriction coefficients}\label{sec:rescof}
In this section, we compute various examples of restriction coefficients. 

				\subsection{The elements $R_w$}
%				Using the quadratic relations, we get \begin{equation}\label{eq:TxCs}
	%					T_xC_s = \begin{cases}
		%						T_{xs} + qT_x & \hbox{ if } xs>x,
		%						\\
		%						T_{xs} + q^{-1}T_x & \hbox{ if } xs<x.
		%					\end{cases} 
	%				\end{equation}
%				
%				Let $h \in \cH$. Write $$h = \sum_{y \in W}^{} a_y q^{-\ell(y)} T_y.$$ Then \begin{align}
	%					hC_s &= \sum_{\substack{y \in W \\ ys>y}} \bigg( a_yq^{-\ell(y)} T_y + a_{ys}q^{-\ell(ys)}T_{ys} \bigg) C_s
	%					\nonumber
	%					\\
	%					&= \sum_{\substack{y \in W \\ ys>y}} \bigg( a_yq^{-\ell(y)} (T_{ys}+qT_y) + a_{ys}q^{-\ell(ys)}(T_y+q^{-1}T_{ys}) \bigg)
	%					\nonumber
	%					\\
	%					&= \sum_{\substack{y \in W \\ ys>y}} \bigg( (a_yq^{-\ell(ys)+1} + a_{ys}q^{-\ell(ys)-1}) T_{ys} + (a_yq^{-\ell(y)+1}+a_{ys}q^{-\ell(y)-1})T_y\bigg)
	%					\nonumber
	%					\\
	%					&= \sum_{\substack{y \in W \\ ys>y}} (qa_y+q^{-1}a_{ys}) q^{-\ell(y)}T_y + \sum_{\substack{y \in W \\ ys<y}} (q^{-1}a_y+qa_{ys})q^{-\ell(y)}T_y
	%					\label{eq:hC_s}
	%				\end{align}
%				
%				Therefore, if $hC_s = (q+q^{-1})h$, then \begin{align*}
	%					qa_y+q^{-1}a_{ys} &= (q+q^{-1}) a_y &\hbox{ if } ys>y,
	%					\\
	%					q^{-1}a_y+qa_{ys}&= (q+q^{-1})a_y &\hbox{ if }ys<y.
	%				\end{align*}
%				
%				Or, \begin{align*}
	%					a_{ys} = a_y.
	%				\end{align*}
%				
%				So, if for all $s \in J(\subseteq S)$, $hC_s = (q+q^{-1})h$, then for all $w \in W_J$, $a_y = a_{yw}$.

For $w\in W$, let \begin{equation}
	R_w = \sum_{y \leq w}^{} q^{\ell(w)-\ell(y)} T_y.
\end{equation} 

%				In https://nicolaslibedinsky.cl/wp-content/uploads/2021/04/affHeckecat-1.pdf  Libedinsky-Patimo $R_w$ is denoted $\mathbf{N}_w$.

%				Let $a_y = 1$ if $y \leq w$ and $0$ otherwise. Then by \eqref{eq:hC_s}, \begin{align*}
	%					R_wC_s &= q^{\ell(w)}\bigg( \sum_{\substack{y \in W \\ ys>y}}(qa_y+q^{-1}a_{ys})q^{-\ell(y)}T_y + \sum_{\substack{y \in W \\ ys<y}} (q^{-1}a_y+qa_{ys}) q^{-\ell(y)}T_y \bigg)
	%					\\
	%					&= q^{\ell(w)}\bigg( \sum_{\substack{y \in W \\ w\geq ys>y}}(q+q^{-1})q^{-\ell(y)}T_y + \sum_{\substack{y \in W \\ ys>y\geq w}}q\cdot q^{-\ell(y)}T_y 
	%					\\
	%					&\qquad\qquad\qquad+ \sum_{\substack{y \in W \\ w\geq y>ys}} (q^{-1}+q) q^{-\ell(y)}T_y + \sum_{\substack{y \in W \\ y>ys\geq w}}q\cdot q^{-\ell(y)}T_y\bigg)
	%				\end{align*}
%				
%				By lifting lemma, if $s \in S$, $ws<w$ then any $y \leq w$ satisfies $ys \leq w$. Then $R_wC_s = (q+q^{-1})R_w$ for such $w$ and $s$. And taking bar, $\overline{R}_wC_s = (q^{-1}+q)\overline{R}_w$.

%				For $w \in W$, \cite[\S 13.2]{BilleyLakshmibai} the following conditions are equivalent: \begin{enumerate}
	%					\item the KL polynomials all of the form $h_{y,w}(q) = q^{\ell(w)-\ell(y)}$,
	%					\item the KL polynomial $h_{\mathrm{id},w} = q^{\ell(w)}$,
	%					\item the variety $X(w)$ is rationally smooth at every point,
	%					\item the variety $X(w)$ is rationally smooth at identity,
	%					\item the Poincare polynomial of $X(w)$ is symmetric,
	%					\item the Bruhat graph $[\mathrm{id},w]$ is regular,
	%					\item (for type A) $w$ avoids the patterns $3412$ and $4231$.
	%				\end{enumerate}
%				

For $w \in W$, by \cite[\S 13.2]{BilleyLakshmibai}, $C_w = R_w$ if and only if the Schubert variety $X(w)$ is rationally smooth, and for type A, if and only if $w$ avoids the patterns $3412$ and $4231$. In particular, it applies for the longest element, Coxeter elements, and if $w$ is a product of pairwise commuting simple reflections.

\begin{remark}
	When $W = S_4$ is of type $A_3$, then the only two permutations for which $C_w|_{[1,2]}$ is not a single monomial is $w = s_2s_3s_1s_2 = (1,3)(2,4) = 3412$ and $s_1s_2s_3s_2s_1 = (1,4) = 4231$. So in this case $C_w|_{[1,2]}$ is a single monomial if $w$ avoids the patterns $3412$ and $4231$ (same as above). 
\end{remark}

%Now, \begin{align*}
%	R_w|_{J} = \sum_{\substack{y \leq w \\ y \in J}} q^{\ell(w)-\ell(y)}T_y
%\end{align*}
%
%In particular,  \begin{equation}\label{eq:R_w|JwhenwgeqwoJ}
%	R_w|_J = \sum_{y \in J}^{} q^{\ell(w)-\ell(y)}T_y = \begin{cases}
%		q^{\ell(w)-\ell(w_0^J)} R_{w_0^J}, \qquad \hbox{ for } w \geq w_0^J,
%		\\
%		R_w, \qquad \hbox{ if } w \leq w_0^J.
%	\end{cases}
%\end{equation} 

Equation \eqref{eq:rescoeffgenT} is true when $C$ is replaced by $R$ and $h$ by the corresponding transition matrix, since we have not used any property of $C$ except upper unitriangularity, then \begin{align}\label{eq:TuinvRwresgen}
	(T_{u^{-1}}R_w)|_J = \sum_{\substack{v \in W_J \\ uv \leq w}}^{} q^{\ell(w)-\ell(uv)}T_v = q^{\ell(w)-\ell(u)} \sum_{\substack{v \in W_J \\ uv \leq w}}^{} q^{-\ell(v)}T_v = q^{\ell(w)-\ell(u)} \sum_{\gamma \text{ max}} q^{-\ell(\gamma)} R_\gamma, 
\end{align} where the sum runs over all $\gamma$ which are maximal elements (w.r.t Bruhat order) of the set $\{v \in W_J \,|\, uv\leq w \}$.

%\begin{question}
%	How to write the KL basis elements in the $R$-basis? 
%\end{question}

%

\subsection{Dihedral groups}

For $m\in \ZZ_{\geq 3}\cup\{\infty\}$, the dihedral group $W$ is the Coxeter group of type $I_2(m)$ on two generators $s_1,s_2$ with relations $$ s_1^2=s_2^2=1, \qquad (s_1s_2)^m = 1 \text{ if } m<\infty.$$ 
%The group $W$ is finite or infinite depending on $m$ is finite or infinite. 
In this case \cite[page 163]{HumphreysCoxeterGroups} says \begin{align*}
	C_w = \sum_{x \leq w}^{} q^{\ell(w)-\ell(x)}T_x = R_w					
\end{align*}

%Therefore, by \eqref{eq:R_w|JwhenwgeqwoJ}, $$ C_w|_{[1]} = \begin{cases}
%	q^{\ell(w)-1}C_{s_1}, &\hbox{ if } w \geq s_1,
%	\\
%	q^{\ell(w)}, &\hbox{ if } w \leq s_2.
%\end{cases}$$

By \eqref{eq:TuinvRwresgen},

\begin{proposition}\label{prop:posDG}
	Let $W$ be of type $I_2(m)$ for $m \in \ZZ_{\geq 3} \cup \{\infty\}$ and $u\in W^{[1]}$ with $\ell(u) =k$. Then $$ (T_{u^{-1}}C_w)|_{[1]} = \begin{cases}
		q^{\ell(w)-(k+1)} C_{s_1} &\hbox{ if } w \geq us_1,
		\\
		q^{\ell(w)-k} &\hbox{ if }  u \leq w \leq (us_1)_{1 \leftrightarrow 2},
		\\
		0 &\hbox{ if } w \ngeq u,
	\end{cases}$$ where $(us_1)_{1 \leftrightarrow 2}$ denotes the element $us_1$ with $1$ and $2$ interchanged in the reduced expression.
\end{proposition}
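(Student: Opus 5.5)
Since for dihedral groups $C_w = R_w$ for every $w$, I will apply \eqref{eq:TuinvRwresgen} with $J = [1] = \{s_1\}$, so $W_J = \{1, s_1\}$. For $u \in W^{[1]}$ with $\ell(u) = k$ this gives
\begin{equation*}
(T_{u^{-1}}C_w)|_{[1]} = q^{\ell(w)-k} \sum_{\substack{v \in \{1,s_1\} \\ uv \leq w}} q^{-\ell(v)} T_v .
\end{equation*}
The proof then reduces to determining, for each $w$, which of the two conditions $u \leq w$ and $us_1 \leq w$ hold. Since $u \in W^{[1]}$, we have $us_1 > u$. So the set $\{v \in W_J : uv \leq w\}$ is one of three sets: $\{1, s_1\}$ (when $us_1 \leq w$), $\{1\}$ (when $u \leq w$ but $us_1 \nleq w$), or empty (when $u \nleq w$).

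In the first case the sum is $q^{\ell(w)-k}(1 + q^{-1}T_{s_1}) = q^{\ell(w)-k-1}(T_{s_1}+q) = q^{\ell(w)-(k+1)}C_{s_1}$. In the second case it is $q^{\ell(w)-k}$. In the third case it is $0$, which also follows from \autoref{lem:TuinvCwressupport}. This already yields the three displayed values.

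The remaining work, and the only real obstacle, is to identify the condition "$u \leq w$ but $us_1 \nleq w$" with the interval condition $u \leq w \leq (us_1)_{1\leftrightarrow 2}$. For this I will use the explicit Bruhat order of $I_2(m)$: $x \leq y$ if and only if $\ell(x) < \ell(y)$, or $x = y$ (for $m < \infty$ the longest element sits above everything). The element $us_1$ has length $k+1$, and its reduced word ends in $s_1$. Its only other element of length $k+1$ is $(us_1)_{1\leftrightarrow 2}$, whose reduced word ends in $s_2$.

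\begin{itemize}
\item If $\ell(w) \geq k+2$, then $us_1 \leq w$.
\item If $\ell(w) = k+1$, then $us_1 \leq w$ exactly when $w = us_1$.
\item If $\ell(w) \leq k$, then $us_1 \nleq w$.
\end{itemize}

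Hence $u \leq w$ with $us_1 \nleq w$ means that either $\ell(w) \in \{k, k+1\}$ with $w \geq u$ and $w \neq us_1$, or $\ell(w) = k$ and $w = u$. Together these say exactly that $w \in [u, (us_1)_{1\leftrightarrow 2}]$. Note that $u \leq (us_1)_{1\leftrightarrow 2}$ because the lengths differ by one.

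Some edge cases need checking. When $u = 1$, $(us_1)_{1\leftrightarrow 2} = s_2$, and the interval $\{1, s_2\}$ matches. When $m$ is finite and $us_1 = w_0$, the element $(us_1)_{1\leftrightarrow 2}$ equals $w_0$ as well. In that situation the second case is read as $u \leq w$ with $w \neq w_0$, that is, $w = u$ or $w$ of length $\ell(u)+1$ other than $w_0$. The case distinctions should be phrased carefully so that the three cases are exhaustive and disjoint.
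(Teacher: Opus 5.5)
Your proposal is correct and follows the paper's route: the paper's proof is just the observation $C_w=R_w$ for dihedral groups together with \eqref{eq:TuinvRwresgen}, and you supply the Bruhat-order bookkeeping ($x\le y$ iff $\ell(x)<\ell(y)$ or $x=y$) that the paper leaves implicit. Your edge-case remark is also right: for finite $m$ and $\ell(u)=m-1$ one has $us_1=(us_1)_{1\leftrightarrow 2}=w_0$, so the first two cases of the statement overlap at $w=w_0$, and the first case (value $C_{s_1}$, not $q$) is the correct one there.
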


\subsection{Parabolic Kazhdan-Lusztig polynomials for type $A$} We can compute all the parabolic Kazhdan-Lusztig polynomials for the Weyl group $W$ of type $A_n$ using \eqref{eq:Pmin=h} for the subset $J = [n-1]$ as follows. 

Since $W^{[n-1]} = \{1,s_n,s_{n-1}s_n,\ldots, s_1\ldots s_n\}$, and $C_{s_i\ldots s_n} = C_{s_i}\ldots C_{s_n} = R_{s_i\ldots s_n}$ for $i \in [n]$, we have that $C_u = R_u$ for $u \in W^{[n-1]}$. By \eqref{eq:Pmin=h}, the parabolic Kazhdan-Lusztig polynomials corresponding to sign representation are the elements $h^{[n-1]}_{u,u'}(q)$ for $u,u' \in W^{[n-1]}$. We can use \eqref{eq:TuinvRwresgen} to give a formula for these polynomials: \begin{equation}
	h^{[n-1]}_{u,u'}(q) = \begin{cases}
		1 &\hbox{ if } \ell(u) - \ell(u') = 0,
		\\
		q &\hbox{ if } \ell(u) - \ell(u') = 1,
		\\
		0 &\hbox{ otherwise.}
	\end{cases}
\end{equation} 

%				\subsection{$G_2$}
%				
%				The Coxeter system $G_2$ is isomorphic to the Coxeter system $I_2(6)$, which is the dihedral group of order $12$.
%				
%				\subsection{$H_3$}

%\subsection{Type $F_4$}
%
%In this case, $$ C_{s_3s_2s_3s_4s_3s_2s_3}|_{[1,2,3]} = (q+q^3) C_{s_3s_2s_3s_2} + q^2 C_{s_3},$$ which shows that not all coefficients are monomial. {\color{red}With our minimum computing abilities, we were unable to find an example for such in type $A$, where we checked upto type $A_5 = S_6$.} {\color{blue}For us SAGE worked for length up to $6$ only but we may have explicit (easy) calculation for parabolic KL polynomials for $J=[n-1]$ for any $n$.}

\subsection{Restriction coefficients for type A}\label{sec:resA}

In this section we investigate the $C$-expansion of the restriction  $(T_{u^{-1}}C_w)|_{[n-1]}$ for $W$ of type $A_n$, i.e, $W = S_{n+1}$. Here $u \in W^{[n-1]} = \{1,s_n,s_{n-1}s_n,\ldots, s_1\ldots s_n\}$ and  $W_{[n-1]} = \langle s_1,\ldots, s_{n-1}\rangle \cong S_n$. Recall the parabolic factorization notation $w = w^{[n-1]}\cdot w_{[n-1]}$ with $w^{[n-1]} \in W^{[n-1]}$ and $w_{[n-1]} \in W_{[n-1]}$. 
We have computed the restriction coefficients explicitly for $u \in \{s_1\ldots s_n, s_2\ldots s_n, s_3\ldots s_n\}$.

%\subsection{type $A$ coefficients}

 %%%%%%%%%%%%%%%%
 
 %%%%%%%%%%%%%%%%
The next lemma will be repeatedly used in the rest of this section. 
 
\begin{lemma}\label{lem:heq1}
	Let $y,x \in W_{[n-1]}$. Then for $1 \leq i \leq n$,
	$$h_{y,x}(q) = h_{s_i\ldots s_ny,s_i\ldots s_n x}(q).$$
\end{lemma}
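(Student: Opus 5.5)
We argue by induction on $j = n-i+1$, the length of $u_i := s_i s_{i+1}\cdots s_n \in W^{[n-1]}$, peeling off one simple reflection at a time with \autoref{prop:Casselmannrec1} or \autoref{prop:mainrecforKL}. The base case $u=1$ is trivial. Put $u_i = s_i u_{i+1}$ and $s = s_i$. The inductive step is the claim
\[
h_{u_{i+1}y,\,u_{i+1}x} = h_{s_i u_{i+1} y,\; s_i u_{i+1} x} \qquad \text{for } x, y \in W_{[n-1]}.
\]
Write $X = u_i x$ and $Y = u_i y$, so that $sX = u_{i+1}x$ and $sY = u_{i+1}y$.

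We first check the hypotheses needed. Since $u_i \in W^{[n-1]}$ and $x \in W_{[n-1]}$, we have $\ell(X) = \ell(u_i) + \ell(x)$, so $sX < X$, and likewise $sY < Y$. If $y \nleq x$, then $Y \nleq X$ by \cite[Proposition 2.5.1]{BjornerBrenti} (the coset parts agree and $y \nleq x$ in $W_J$), and similarly $sY \nleq sX$, so both sides vanish. We may therefore assume $y \le x$, and then $Y \le X$ by the subword property. The case $y = x$ is immediate, so assume $Y < X$.

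The key point is $Y \nleq sX$. Suppose $u_i y \le u_{i+1} x$. Projecting to $W^{[n-1]}$ preserves Bruhat order \cite[Proposition 2.5.1]{BjornerBrenti}, so $u_i \le u_{i+1}$. This is impossible because $\ell(u_i) > \ell(u_{i+1})$. Here we use that $u_{i+1}$ lies in $W^{[n-1]}$, with parabolic factorization $u_{i+1}x = u_{i+1} \cdot x$ by \autoref{lem:uz}-type reasoning, and that $W^{[n-1]}$ is the chain $1 < s_n < s_{n-1}s_n < \cdots$.

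\autoref{prop:Casselmannrec1} then gives $h_{Y,X} = h_{sY,sX}$, and the induction hypothesis finishes the proof. The main obstacle is exactly the step $Y \nleq sX$, namely verifying that $u_{i+1}x$ has coset representative $u_{i+1}$, so that the projection argument applies. This is a direct length computation: $s_{i+1}\cdots s_n x$ times any $t \in [n-1]$ has greater length, because $\ell(u_{i+1} x t) = \ell(u_{i+1}) + \ell(xt)$ whenever $xt > x$. If $xt < x$, the projection of $u_{i+1}x$ is still $u_{i+1}$, because the factorization $u_{i+1}\cdot x$ with $u_{i+1} \in W^{[n-1]}$ and $x \in W_{[n-1]}$ is unique.
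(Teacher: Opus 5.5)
\textbf{Verdict.} Your strategy is the paper's. You peel off $s_i$ with \autoref{prop:Casselmannrec1}, and you get the key hypothesis $u_iy\nleq u_{i+1}x$ from order-preservation of $P^{[n-1]}$ together with $\ell(u_i)>\ell(u_{i+1})$. That part is correct. One step, however, is not justified by what you cite.

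\textbf{The gap.} You claim that $y\nleq x$ implies $u_iy\nleq u_ix$ "by \cite[Proposition 2.5.1]{BjornerBrenti}". That proposition only says $w\mapsto w^{[n-1]}$ is order-preserving, so it only compares coset representatives. Here both elements have the same representative $u_i$, so it says nothing about the $W_{[n-1]}$-components. Your ``similarly $sY\nleq sX$'' has the same problem. The implication you need is true, but it does not follow from 2.5.1. The paper obtains it from the lifting property: $s_i$ is a left descent of both $u_iy$ and $u_ix$, so $u_iy\le u_ix \iff u_{i+1}y\le u_{i+1}x$, and iterating gives $u_iy\le u_ix\iff y\le x$.

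\textbf{A simpler repair.} Within your induction you can split cases on $Y\le X$ instead of on $y\le x$.
\begin{itemize}
\item If $Y\le X$, then either $Y=X$ (trivial) or $Y<X$, and you apply \autoref{prop:Casselmannrec1} exactly as you do.
\item If $Y\nleq X$, then $sY\nleq sX$ by the subword property. Indeed, suppose $sY\le sX$. Prepending $s$ to a reduced word for $sX$ gives a reduced word for $X$, since $sX<X$. Prepending $s$ to a reduced subword for $sY$ gives a word of length $\ell(Y)$ for $Y$, since $sY<Y$. This yields $Y\le X$, a contradiction. So both sides vanish.
\end{itemize}
With this split, no comparison between $y$ and $x$ is needed.

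\textbf{A minor slip.} In your last paragraph, the assertion that $\ell(u_{i+1}xt)>\ell(u_{i+1}x)$ for every $t\in[n-1]$ is false when $xt<x$. All you need is $u_{i+1}\in W^{[n-1]}$ and uniqueness of the factorization $u_{i+1}x=u_{i+1}\cdot x$, which you do state eventually.
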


\begin{proof}
	By lifting property \cite[Proposition 2.2.7]{BjornerBrenti}, for $y,x \in W_{[n-1]}$, $s_i\ldots s_ny < s_i \ldots s_nx$, if and only if $s_{i+1}\ldots s_ny < s_{i+1}\ldots s_nx$. Thus for $y,x \in W_{[n-1]}$, $s_i\ldots s_ny < s_i \ldots s_nx$, if and only if $y < x$. Also recall from \cite[Proposition 2.5.1]{BjornerBrenti}, the projection map $P^{[n-1]}: W \to W^{[n-1]}$ is Bruhat order preserving.
	
For $y,x \in W_{[n-1]}$ with $s_i\ldots s_ny < s_i\ldots s_n x$, we have $s_i(s_i\ldots s_n x) = s_{i+1}\ldots s_nx < s_i \ldots s_n x$ and $s_i(s_i\ldots s_n y) = s_{i+1}\ldots s_ny < s_i\ldots s_n x$, and since $P^{[n-1]}$ is order preserving, $s_i\ldots s_n y \nleq s_i(s_i\ldots s_nx) = s_{i+1}\ldots s_nx$. Hence by \autoref{prop:Casselmannrec1} with $s= s_i$, $$ h_{s_i\ldots s_ny,s_i\ldots s_nx} = h_{s_{i+1}\ldots s_ny,s_{i+1}\ldots s_nx}.$$ Continuing we get $$ h_{s_i\ldots s_ny,s_i\ldots s_nx} = h_{y,x}.$$
%	
%	For $y,x \in W_{[n-1]}$ with $y<x$, then $s_nx>x, s_nx > s_ny>y$ and $s_ny \nleq x$, since otherwise by applying $P^{[n-1]}$ we would get $s_n \leq 1$, a contradiction. Then by \autoref{prop:Casselmannrec1} $$h_{s_ny,s_nx}(q) = h_{y,x}(q).$$
%	Once again by applying $P^{[n-1]}$, we get   $s_nx<s_{n-1}s_nx, s_ny<s_{n-1}s_ny<s_{n-1}s_nx$ and  $s_{n-1}s_ny \nleq s_nx$  . Then by \autoref{prop:Casselmannrec1} $$ h_{s_{n-1}s_ny,s_{n-1}s_nx}(q) = h_{y,x}(q).$$
%	Continuing, we get $$ h_{y,x}(q) = h_{s_i\ldots s_n y, s_i\ldots s_n x}(q).$$
%	
%	{\color{red} The case when $y$ is not comparable to $x$?? Fine for $i=n$.}
\end{proof}	
			
\begin{proposition} For $w\in W$,
	$$ (T_{s_n\ldots s_1}C_w)|_{[n-1]} = \begin{cases}
		0 &\hbox{ if } w^{[n-1]} \neq s_1\ldots s_n,
		\\
		C_{w_{[n-1]}} &\hbox{ if } w^{[n-1]} = s_1\ldots s_n.
	\end{cases}$$
\end{proposition}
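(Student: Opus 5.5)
Write $u = s_1\cdots s_n \in W^{[n-1]}$, so that $u^{-1} = s_n\cdots s_1$ and $T_{s_n\ldots s_1} = T_{u^{-1}}$. By \eqref{eq:rescoeffgenT},
\[
(T_{u^{-1}}C_w)|_{[n-1]} = \sum_{v \in W_{[n-1]}} h_{uv,w}(q)\, T_v .
\]

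\textbf{Vanishing case.} The plan first disposes of the case $w^{[n-1]} \neq u$. The element $u = s_1\cdots s_n$ is the unique maximal element of $W^{[n-1]}$: this set is the chain $1 < s_n < s_{n-1}s_n < \cdots < s_1\cdots s_n$. Hence $w^{[n-1]} \neq u$ forces $u \nleq w^{[n-1]}$. \autoref{lem:TuinvCwressupport} then gives that the restriction is $0$.

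\textbf{Main case.} Now suppose $w = u x$ with $x = w_{[n-1]}$. The coefficient $h_{uv,w}$ is nonzero only if $uv \leq ux$. By \autoref{lem:heq1} with $i=1$, comparability is preserved: $uv \leq ux$ if and only if $v \leq x$, and in that case $h_{uv,ux} = h_{v,x}$. So the sum becomes
\[
\sum_{v \leq x,\; v \in W_{[n-1]}} h_{v,x}(q)\, T_v .
\]
Since $x \in W_{[n-1]}$, the Kazhdan–Lusztig element $C_x$ of $\cH$ lies in $\cH_{[n-1]}$. It is the Kazhdan–Lusztig element there as well, because the $h_{v,x}$ for $v,x$ in a parabolic subgroup agree (self-duality and degree conditions are intrinsic, with the bar involution compatible). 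The sum is therefore exactly $C_x = C_{w_{[n-1]}}$.

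\textbf{Expected obstacle.} The delicate step is justifying that $uv \nleq ux$ whenever $v \nleq x$, so that no stray terms survive. I would extract this from the first paragraph of the proof of \autoref{lem:heq1}: the lifting property shows $s_i\cdots s_n y < s_i\cdots s_n x$ if and only if $s_{i+1}\cdots s_n y < s_{i+1}\cdots s_n x$. Iterating from $i=1$ down to $i=n$ yields $uv \le ux \iff v \le x$. That lemma is stated for $h$, but its proof establishes exactly this equivalence, which I would record explicitly.
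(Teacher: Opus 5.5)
Your proposal is correct and follows the paper's proof. \autoref{lem:TuinvCwressupport} handles the case $w^{[n-1]}\neq s_1\cdots s_n$; your remark that $W^{[n-1]}$ is a Bruhat chain with maximum $s_1\cdots s_n$ makes explicit why $u\nleq w^{[n-1]}$ there. Then \eqref{eq:rescoeffgenT} together with \autoref{lem:heq1} turns the sum into $C_{w_{[n-1]}}$. The step you flag as delicate is already covered by the statement of \autoref{lem:heq1}, which holds for all $y,x\in W_{[n-1]}$, so $h_{uv,ux}=h_{v,x}=0$ automatically when $v\nleq x$.
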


\begin{proof}
	By \autoref{lem:TuinvCwressupport}, if $w^{[n-1]} \neq s_1\ldots s_n$ then $(T_{s_n\ldots s_1}C_w)|_{[n-1]}=0$. Assume $w= s_1\ldots s_{n}\cdot v$ for some $v \in W_{[n-1]}$. Then by \eqref{eq:rescoeffgenT} and \autoref{lem:heq1} we get \begin{align*}
		(T_{s_n\ldots s_1}C_{s_1\ldots s_n\cdot v})|_{[n-1]} = \sum_{x \leq v}^{} h_{s_1\ldots s_n\cdot x,s_1\ldots s_n\cdot v}(q)T_x = \sum_{x \leq v}^{} h_{ x, v}(q)T_x = C_v. 
	\end{align*}
\end{proof}

By \eqref{eq:rescoeffgenT}, \begin{equation}\label{eq:si...sn}
	(T_{s_n\ldots s_i}C_{s_i\ldots s_n\cdot v})|_{[n-1]} = \sum_{x \leq v}^{} h_{s_i\ldots s_n\cdot x,s_i\ldots s_n\cdot v}(q)T_x = \sum_{x \leq v}^{} h_{x,v}(q)T_x = C_v.
\end{equation}

\begin{proposition} For $w\in W$,
	$$ (T_{s_n\ldots s_2}C_w)|_{[n-1]} = \begin{cases}
		C_{w_{[n-1]}} &\hbox{ if } w^{[n-1]} = s_2\ldots s_n,
		\\
		qC_{w_{[n-1]}} &\hbox{ if } w^{[n-1]} = s_1s_2\ldots s_n,
		\\
		0 &\hbox{ else.}
	\end{cases}$$
\end{proposition}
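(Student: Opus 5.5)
We take $u = s_2\ldots s_n \in W^{[n-1]}$, so that $T_{u^{-1}} = T_{s_n\ldots s_2}$, and write $w = w^{[n-1]}\cdot v$ with $v = w_{[n-1]}$. By \autoref{lem:TuinvCwressupport} the restriction vanishes unless $u \leq w^{[n-1]}$. Inside the chain $W^{[n-1]}$ this forces $w^{[n-1]} \in \{s_2\ldots s_n,\ s_1s_2\ldots s_n\}$, which gives the "else" case. The case $w^{[n-1]} = s_2\ldots s_n$ is exactly \eqref{eq:si...sn} with $i=2$, giving $C_v$. So the only real work is the case $w = s_1s_2\ldots s_n\cdot v$.

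In that case \eqref{eq:rescoeffgenT} gives
\[
(T_{s_n\ldots s_2}C_w)|_{[n-1]} = \sum_{x \in W_{[n-1]}} h_{s_2\ldots s_n x,\; s_1\ldots s_n v}(q)\, T_x .
\]
We want to show that
\[
h_{s_2\ldots s_n x,\; s_1 s_2\ldots s_n v} = q\, h_{s_1s_2\ldots s_n x,\; s_1s_2\ldots s_n v}.
\]
Once this holds, \autoref{lem:heq1} with $i=1$ identifies the right-hand side with $q\,h_{x,v}$, and the sum becomes $qC_v$.

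The natural tool for this identity is \autoref{prop:Casselmannrecq} with $s = s_1$, applied to the pair $y = s_2\ldots s_n x$ and the element $s_1s_2\ldots s_n v$. Two conditions must be checked.

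\begin{enumerate}[(i)]
\item $s_1(s_1s_2\ldots s_n v) < s_1s_2\ldots s_n v$. This is clear: $s_1\cdot s_1s_2\ldots s_n v = s_2\ldots s_n v$, and lengths add along the parabolic factorization.
\item $s_1(s_2\ldots s_n x) > s_2\ldots s_n x$. The element $s_1s_2\ldots s_n x$ has parabolic factorization $(s_1\ldots s_n)\cdot x$, so its length is $\ell(s_1\ldots s_n)+\ell(x) = \ell(s_2\ldots s_n x)+1$.
\end{enumerate}

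With both conditions, \autoref{prop:Casselmannrecq} yields $h_{s_2\ldots s_n x,\,w} = q\,h_{s_1s_2\ldots s_n x,\,w}$, as needed. The result does not require $y \leq w$: both sides vanish otherwise, consistent with $h_{x,v}=0$ for $x \nleq v$.

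The main point requiring care is confirming the length/descent bookkeeping in (ii), i.e.\ that $s_1 s_2\ldots s_n$ is reduced and lies in $W^{[n-1]}$ so that lengths add with $x$. This is immediate from the listed description of $W^{[n-1]}$ and \cite[Proposition 2.4.4]{BjornerBrenti}. The rest of the argument is formal.
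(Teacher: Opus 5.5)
Your proof is correct and follows the paper's argument essentially step for step: vanishing via \autoref{lem:TuinvCwressupport}, the case $w^{[n-1]}=s_2\ldots s_n$ via \eqref{eq:si...sn}, and the case $w^{[n-1]}=s_1\ldots s_n$ via \eqref{eq:rescoeffgenT} together with \autoref{prop:Casselmannrecq} (with $s=s_1$) and \autoref{lem:heq1}. The only difference is that you spell out the descent checks and the fact that $W^{[n-1]}$ is a Bruhat chain, which the paper leaves implicit.
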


\begin{proof}
	When $w^{[n-1]} \ngeq s_2\ldots s_n$  then $(T_{s_n\ldots s_2}C_w)|_{[n-1]}=0$ by \autoref{lem:TuinvCwressupport}.
	When $w^{[n-1]} = s_2\ldots s_n$ then use \eqref{eq:si...sn}. When $w^{[n-1]} = s_1\ldots s_n$, then by \autoref{prop:Casselmannrecq} and \autoref{lem:heq1},
	$$ h_{s_2\ldots s_n\cdot y,s_1\ldots s_n\cdot x} = qh_{s_1\ldots s_n\cdot y, s_1\ldots s_n \cdot x} = q h_{y,x} \qquad \hbox{ for } y,x \in W_{[n-1]}.$$ By \eqref{eq:rescoeffgenT}, for $v \in W_{[n-1]}$, \begin{align*}
		&(T_{s_n\ldots s_2}C_{s_1\ldots s_n\cdot v})|_{[n-1]} = \sum_{y \in W_{[n-1]}}^{} h_{s_2\ldots s_n\cdot y,s_1\ldots s_n\cdot v}T_y 
		= q \sum_{y \in W_{[n-1]}}^{} h_{y,v}T_y = qC_v. 
	\end{align*} 
	
\end{proof}

For $y,x \in W_{[n-1]}$, by \autoref{prop:Casselmannrecq} and \autoref{lem:heq1}, we have
\begin{equation}
	h_{s_{i+1}\ldots s_ny,s_is_{i+1}\ldots s_nx} = qh_{s_i\ldots s_ny,s_i\ldots s_nx} = q h_{y,x}
\end{equation} and the same calculation as above shows that for $v \in W_{[n-1]}$, \begin{equation}
(T_{s_n\ldots s_{i+1}}C_{s_i\ldots s_n\cdot v} )|_{[n-1]} = qC_v.
\end{equation}

\begin{proposition}\label{prop:hi+2i}
	For $y,x \in W_{[n-1]}$ and $i \in [n-2]$,
	\begin{equation}
		h_{s_{i+2}\ldots s_ny,s_i\ldots s_nx} = \begin{cases}
			q^2h_{ y,x} &\hbox{ if } s_ix<x,
			\\
			qh_{ s_iy,x} + h_{y,x} &\hbox{ if } s_iy<y, s_ix>x,
			\\
			qh_{ s_iy,x} + q^{2}h_{y,x} &\hbox{ if } s_iy>y, s_ix>x.
		\end{cases}
	\end{equation}
	
\end{proposition}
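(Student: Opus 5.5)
The plan is to apply the recursion of \autoref{prop:mainrecforKL} with $s=s_i$ to the pair $Y:=s_{i+2}\cdots s_n y$, $X:=s_i\cdots s_n x$. Then I will reduce every term that appears to Kazhdan--Lusztig polynomials of $W_{[n-1]}$, using \autoref{prop:Casselmannrecq} and \autoref{lem:heq1}. If $Y\nleq X$, every term below vanishes, so I assume $Y\le X$; also $Y\ne X$, since their $W^{[n-1]}$-parts differ.

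First the setup. Since $\ell(s_i\cdots s_n x)=\ell(s_i\cdots s_n)+\ell(x)$, we have $s_iX=s_{i+1}\cdots s_n x<X$. Since $i\le n-2$, $s_i$ commutes with $s_{i+2},\ldots,s_n$, so $s_iY=s_{i+2}\cdots s_n\,(s_iy)$ with $s_iy\in W_{[n-1]}$. By additivity of length in the parabolic factorization, $s_iY>Y$ if and only if $s_iy>y$. This fixes $c=+1$ when $s_iy>y$ and $c=-1$ when $s_iy<y$. \autoref{prop:mainrecforKL} then gives
\begin{equation*}
h_{Y,X}=h_{s_iY,\,s_{i+1}\cdots s_nx}+q^{c}h_{Y,\,s_{i+1}\cdots s_nx}-\sum_{\substack{Y\le w<s_{i+1}\cdots s_nx\\ s_iw<w}}\mu(w,s_{i+1}\cdots s_nx)\,h_{Y,w}.
\end{equation*}

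Next, the two leading terms. Apply \autoref{prop:Casselmannrecq} with $s=s_{i+1}$: this is a left descent of $s_{i+1}\cdots s_n x$ but not of $s_{i+2}\cdots s_n z$ for $z\in W_{[n-1]}$. Then \autoref{lem:heq1} gives
\begin{equation*}
h_{s_{i+2}\cdots s_n z,\,s_{i+1}\cdots s_n x}=q\,h_{s_{i+1}\cdots s_n z,\,s_{i+1}\cdots s_n x}=q\,h_{z,x}.
\end{equation*}
Taking $z=s_iy$ and $z=y$, the leading terms equal $q\,h_{s_iy,x}+q^{c+1}h_{y,x}$.

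The main work is the $\mu$-sum. Any $w$ in the sum satisfies $Y\le w<s_{i+1}\cdots s_nx$. Since the projection to $W^{[n-1]}$ is order preserving, $w=u'v$ with $u'\in\{s_{i+2}\cdots s_n,\ s_{i+1}\cdots s_n\}$ and $v\in W_{[n-1]}$.
\begin{itemize}
\item If $u'=s_{i+1}\cdots s_n$, then $s_iw=s_is_{i+1}\cdots s_n v$ is length-additive, so $s_iw>w$ and $w$ is excluded.
\item If $u'=s_{i+2}\cdots s_n$, the condition $s_iw<w$ becomes $s_iv<v$. By the display above, $h_{w,s_{i+1}\cdots s_nx}=q\,h_{v,x}$, so $\mu(w,\cdot)$ is the constant term of $h_{v,x}$. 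This constant term is $1$ if $v=x$ and $0$ otherwise, since $h_{v,x}\in q\ZZ[q]$ for $v<x$.
\end{itemize}
Hence the sum consists of the single term $w=s_{i+2}\cdots s_n x$, present exactly when $s_ix<x$, and by \autoref{lem:heq1} it equals $h_{Y,w}=h_{y,x}$. I expect this step to be the main obstacle: one must check carefully that no other $w$ contributes.

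Finally, the case split. If $s_ix>x$, there is no correction, and the two values of $c$ give exactly the second and third cases of the statement. If $s_ix<x$, we get $q\,h_{s_iy,x}+q^{c+1}h_{y,x}-h_{y,x}$.
\begin{itemize}
\item If $s_iy>y$, then $c=1$, and \autoref{prop:Casselmannrecq} in $W_{[n-1]}$ gives $q\,h_{s_iy,x}=h_{y,x}$; the total is $q^2h_{y,x}$.
\item If $s_iy<y$, then $c=-1$, and \autoref{prop:Casselmannrecq} applied to $s_iy$ gives $h_{s_iy,x}=q\,h_{y,x}$; the total is again $q^2h_{y,x}$.
\end{itemize}
This establishes the first case.
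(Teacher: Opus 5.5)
Your proof is correct and follows the paper's argument essentially step for step: you apply \autoref{prop:mainrecforKL} with $s=s_i$, evaluate the two leading terms as $qh_{s_iy,x}+q^{c+1}h_{y,x}$ via \autoref{prop:Casselmannrecq} and \autoref{lem:heq1}, reduce the $\mu$-sum to $-\chi_{s_ix<x}h_{y,x}$, and finish with the same four-way case analysis. You are in fact a bit more careful than the paper, which neither addresses the case $Y\nleq X$ nor explains why the coset $s_{i+1}\cdots s_nW_{[n-1]}$ contributes nothing to the $\mu$-sum.
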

\begin{proof}
	Using \autoref{prop:mainrecforKL} with $s= s_i$ we get
	\begin{align*}
		h_{s_{i+2}\ldots s_ny,s_i\ldots s_nx} &= h_{ s_{i+2}\ldots s_n s_iy,s_{i+1}\ldots s_nx} + q^c h_{s_{i+2}\ldots s_ny,s_{i+1}\ldots s_nx} 
		\\
		&\qquad- \sum_{\substack{w \\ s_{i+2}\ldots s_n y \leq w <s_{i+1}\ldots s_nx \\ s_iw<w}} \mu(w,s_{i+1}\ldots s_nx) h_{s_{i+2}\ldots s_ny,w}
		\\
		&= qh_{ s_iy,x} + q^{c+1} h_{y,x} 
		- \sum_{\substack{w \\ s_{i+2}\ldots s_n y \leq w <s_{i+1}\ldots s_nx \\ s_iw<w}} \mu(w,s_{i+1}\ldots s_nx) h_{s_{i+2}\ldots s_ny,w} 
	\end{align*} Where $c = \pm 1$ depending on $s_iy>y$ or $s_iy<y$.
	The last sum runs over $w = s_{i+2}\ldots s_n v$ for $v \in W_{[n-1]}$ with $s_iv < v$. Then \begin{align*}
		\mu(w,s_{i+1}\ldots s_nx) = [q] h_{s_{i+2}\ldots s_nv,s_{i+1}\ldots s_nx} = [q] q h_{v,x} = \begin{cases}
			0, &\hbox{ if } v \neq x,
			\\
			1 &\hbox{ if } v = x.
		\end{cases}
	\end{align*}
	Hence, \begin{align*}
		h_{s_{i+2}\ldots s_ny,s_i\ldots s_nx} &= qh_{ s_iy,x} + q^{c+1} h_{y,x} 
		- \chi_{s_ix<x}  h_{s_{i+2}\ldots s_ny,s_{i+2}\ldots s_nx} 
		\\
		&=qh_{ s_iy,x} + q^{c+1} h_{y,x} 
		- \chi_{s_ix<x}  h_{y,x} 
	\end{align*}
	If $s_iy<y$ and $s_ix<x$ then the final two terms cancel, and we are left with $qh_{s_iy,x} = q^2h_{y,x}$, where the last equality is by \autoref{prop:Casselmannrecq}. If $s_iy>y$ and $s_ix<x$ then $qh_{s_iy,x} = h_{y,x}$ by \autoref{prop:Casselmannrecq}, so the first and third terms cancel out. So we get \begin{equation}
		h_{s_{i+2}\ldots s_ny,s_i\ldots s_nx} = \begin{cases}
			q^2h_{ y,x} &\hbox{ if } s_ix<x,
			\\
			qh_{ s_iy,x} + h_{y,x} &\hbox{ if } s_iy<y, s_ix>x,
			\\
			qh_{ s_iy,x} + q^{2}h_{y,x} &\hbox{ if } s_iy>y, s_ix>x.
		\end{cases}
	\end{equation}
\end{proof}

\begin{corollary}
	For $y,x \in W_{[n-1]}$ and $i \in [n-2]$,	
	\begin{equation*}
		\mu(s_{i+2}\ldots s_ny,s_i\ldots s_nx) = \begin{cases}
			\delta_{s_iy,x} + \mu(y,x) &\hbox{ if } s_iy<y, s_ix>x,
			\\
			0 &\hbox{ otherwise. }
		\end{cases}
	\end{equation*}
\end{corollary}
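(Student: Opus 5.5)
The plan is to read off the coefficient of $q$ from the three cases of \autoref{prop:hi+2i}, using that $\mu(a,b)$ is by definition the coefficient of $q$ in $h_{a,b}$. Two basic facts about Kazhdan-Lusztig polynomials in our normalization will be used throughout. First, $h_{y,x}\in q\ZZ[q]$ for $y<x$, $h_{x,x}=1$, and $h_{y,x}=0$ for $y\nleq x$. Second, $h_{y,x}$ has constant term $0$ unless $y=x$; that is, $[q^0]h_{y,x}=\delta_{y,x}$.

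Note also that $s_{i+2}\ldots s_ny\neq s_i\ldots s_nx$, since the two elements have different components in $W^{[n-1]}$. So the $\mu$ in question is genuinely the coefficient of $q$ in an off-diagonal polynomial.

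The three cases go as follows.

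\begin{enumerate}[(a)]
\item If $s_ix<x$, the polynomial equals $q^2h_{y,x}\in q^2\ZZ[q]$, so its $q$-coefficient vanishes.
\item If $s_iy>y$ and $s_ix>x$, the polynomial equals $qh_{s_iy,x}+q^2h_{y,x}$. Its $q$-coefficient is $[q^0]h_{s_iy,x}=\delta_{s_iy,x}$. But $s_iy=x$ together with $s_iy>y$ would give $s_ix=y<x$, contradicting $s_ix>x$. So this coefficient is $0$.
\item If $s_iy<y$ and $s_ix>x$, the polynomial equals $qh_{s_iy,x}+h_{y,x}$. Its $q$-coefficient is $[q^0]h_{s_iy,x}+[q]h_{y,x}=\delta_{s_iy,x}+\mu(y,x)$.
\end{enumerate}

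Here $\mu(y,x)$ is read as the coefficient of $q$ in $h_{y,x}$ for any $y,x$. This convention is consistent with the paper's: it vanishes when $y\nleq x$, and also when $y=x$ since $h_{x,x}=1$.

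There is no real obstacle. The only points needing care are the observation in case (b) that the $\delta$ term cannot survive, and the convention that $\mu(y,x)$ is defined as a $q$-coefficient even when $y\nleq x$ or $y=x$.
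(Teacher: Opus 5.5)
Your proposal is correct and is essentially the paper's proof: read off the coefficient of $q$ in each case of \autoref{prop:hi+2i}, and kill the $\delta_{s_iy,x}$ term in the case $s_iy>y$, $s_ix>x$ by the same observation that $s_iy=x$ would force $s_ix=y<x$. Your explicit remarks, that $h_{a,b}$ has constant term $\delta_{a,b}$ and that $\mu$ is read as a $q$-coefficient even when $y\nleq x$ or $y=x$, just spell out details the paper leaves implicit.
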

\begin{proof}
	By taking the coefficient of $q$ in \autoref{prop:hi+2i}, we get \begin{align*}
		\mu(s_{i+2}\ldots s_ny,s_i\ldots s_nx) = \begin{cases}
			0 &\hbox{ if } s_ix<x,
			\\
			\delta_{s_iy,x} + \mu(y,x) &\hbox{ if } s_iy<y, s_ix>x,
			\\
			\delta_{s_iy,x} &\hbox{ if } s_iy>y, s_ix>x.
		\end{cases}
	\end{align*}  If $s_ix>x$ and $s_iy = x$ then $s_ix=y>x=s_iy$. So the last case will give $0$ always. 
\end{proof}

\begin{proposition}
	For $w \in W$,	\begin{align*}
		(T_{s_n\ldots s_3}C_w)|_{[n-1]} = \begin{cases}
			C_{w_{[n-1]}} &\hbox{ if } w^{[n-1]} = s_3\ldots s_n,
			\\
			qC_{w_{[n-1]}} &\hbox{ if } w^{[n-1]} = s_2\ldots s_n,
			\\
			q^2 C_{w_{[n-1]}} &\hbox{ if } w^{[n-1]} = s_1\ldots s_n \hbox{ and } s_1w_{[n-1]}<w_{[n-1]},
			\\
			q C_{s_1}C_{w_{[n-1]}} &\hbox{ if } w^{[n-1]} = s_1\ldots s_n \hbox{ and } s_1w_{[n-1]}>w_{[n-1]},
			\\
			0 &\hbox{ else.}
		\end{cases}
	\end{align*}
\end{proposition}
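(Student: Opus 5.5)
We follow the pattern of the two preceding propositions and split into cases according to $w^{[n-1]}$. Write $u = s_3\ldots s_n$ and $w = w^{[n-1]}\cdot v$ with $v = w_{[n-1]}$. By \eqref{eq:rescoeffgenT},
\begin{equation*}
(T_{s_n\ldots s_3}C_w)|_{[n-1]} = \sum_{y \in W_{[n-1]}} h_{s_3\ldots s_n y,\, w^{[n-1]} v}(q)\, T_y .
\end{equation*}

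\textbf{Step 1: trivial cases.} If $w^{[n-1]} \ngeq s_3\ldots s_n$, the restriction vanishes by \autoref{lem:TuinvCwressupport}. In $W^{[n-1]}$ the elements $\geq s_3\ldots s_n$ are exactly $s_3\ldots s_n$, $s_2\ldots s_n$ and $s_1\ldots s_n$, so only these three cases remain.

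\textbf{Step 2: the first two cases.} For $w^{[n-1]} = s_3\ldots s_n$ the answer $C_v$ is \eqref{eq:si...sn} with $i=3$. For $w^{[n-1]} = s_2\ldots s_n$ the answer $qC_v$ is the displayed identity $(T_{s_n\ldots s_{i+1}}C_{s_i\ldots s_n\cdot v})|_{[n-1]} = qC_v$ with $i=2$.

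\textbf{Step 3: the case $w^{[n-1]} = s_1\ldots s_n$.} This is the main case. Apply \autoref{prop:hi+2i} with $i=1$ and $x=v$; this gives $h_{s_3\ldots s_n y,\, s_1\ldots s_n v}$ for every $y \in W_{[n-1]}$.

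If $s_1 v < v$, every coefficient equals $q^2 h_{y,v}$, so the sum is $q^2 C_v$.

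If $s_1 v > v$, the coefficient is
\begin{equation*}
q\,h_{s_1y,v} + q^{1+c(y)}\,h_{y,v}, \qquad c(y)=\pm1 \text{ according as } s_1y\gtrless y .
\end{equation*}
We must show
\begin{equation*}
\sum_y \bigl(q\,h_{s_1y,v} + q^{1+c(y)}h_{y,v}\bigr)T_y = q\,C_{s_1}C_v .
\end{equation*}
To do this, compute $C_{s_1}C_v = (T_{s_1}+q)\sum_x h_{x,v}T_x$ using \eqref{eq:TsTw}. This gives
\begin{equation*}
T_{s_1}T_x = T_{s_1x} \ \text{ if } s_1x>x, \qquad T_{s_1}T_x = T_{s_1x}+(q^{-1}-q)T_x \ \text{ if } s_1x<x .
\end{equation*}
Collect the coefficient of $T_y$. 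From the $T_{s_1}$ part we get $h_{s_1y,v}$. From the diagonal terms we get $q\,h_{y,v}$, plus $(q^{-1}-q)h_{y,v}$ when $s_1y<y$. So the coefficient of $T_y$ in $C_{s_1}C_v$ is
\begin{equation*}
h_{s_1y,v} + q^{c(y)}h_{y,v},
\end{equation*}
where $c(y)=+1$ if $s_1y>y$ and $c(y)=-1$ if $s_1y<y$. Multiplying by $q$ gives exactly the coefficient from \autoref{prop:hi+2i}. Since $s_1 v>v$, $C_{s_1}C_v = C_{s_1v}$ plus lower terms, and this expansion holds as stated with no subtraction needed.

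\textbf{Main obstacle.} The delicate point is the bookkeeping in \autoref{prop:hi+2i}: the case $s_1y<y$ carries exponent $q^{0}$ and the case $s_1y>y$ carries $q^{2}$, and these must match $q\cdot q^{c(y)}$ under the correct sign convention for $c$. We would check that sign convention carefully, including the boundary terms where $y$ or $s_1 y$ is not $\leq v$ (there $h$ vanishes on both sides). The factor $C_{s_1}C_v$ is kept unexpanded, which is consistent with positivity.
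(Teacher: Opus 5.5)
Your proposal is correct and follows essentially the same route as the paper: the vanishing case comes from \autoref{lem:TuinvCwressupport}, the first two cases from the earlier identities, and the case $w^{[n-1]} = s_1\ldots s_n$, $s_1v>v$ from expanding $C_{s_1}C_v$ in the $T$-basis and matching coefficients with \autoref{prop:hi+2i} at $i=1$. The only cosmetic differences are these: for $s_1v<v$ you read off $q^2h_{y,v}$ from the first case of \autoref{prop:hi+2i}, whereas the paper gets it from two applications of \autoref{prop:Casselmannrecq} followed by \autoref{lem:heq1}; and your aside that $C_{s_1}C_v = C_{s_1v}+\cdots$ is not needed.
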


\begin{proof}
	The first two and the last cases follow from previous calculations. So we need to calculate when $w^{[n-1]} = s_1\ldots s_n$.
	When $y,x \in W_{[n-1]}$ and $s_1x<x$, then by \autoref{prop:Casselmannrecq} and \autoref{lem:heq1} we get $$ h_{s_3\ldots s_ny, s_1\ldots s_n x} = q h_{s_2\ldots s_ny,s_1\ldots s_nx} = q^2 h_{s_1\ldots s_ny,s_1\ldots s_nx} = q^2 h_{y,x}.$$ By \eqref{eq:rescoeffgenT} this gives the third case above. So the only remaining case to check is the fourth case.
Now,	
	\begin{align*}
		C_{s_1}C_x &= (T_{s_1}+q)C_x = (T_{s_1}+q)(\sum_{y\leq x}^{} h_{y,x}T_y) = \sum_{y \leq x}^{} h_{y,x}T_{s_1}T_y + q \sum_{y \leq x}^{} h_{y,x} T_y
		\\
		&= \sum_{\substack{y \leq x \\ s_1y>y}}^{} \bigg(h_{y,x} T_{s_1y} + h_{s_1y,x}(1+(q^{-1}-q)T_{s_1}) T_y\bigg) + q \sum_{y \leq x}^{} h_{y,x} T_y
		\\
		&=\sum_{\substack{y \leq x \\ s_1y>y}}^{} \bigg((h_{y,x}+h_{s_1y,x}(q^{-1}-q))T_{s_1y}+h_{s_1y,x}T_y\bigg) + q \sum_{y \leq x}^{} h_{y,x} T_y
		\\
		&= \sum_{\substack{y \leq x \\ s_1y>y}}^{} (h_{s_1y,x}+qh_{y,x}) T_y + \sum_{\substack{y \leq x \\ s_1y<y}}^{} (h_{s_1y,x}+(q^{-1}-q)h_{y,x}+qh_{y,x}) T_y
		\\
		&= \sum_{\substack{y \leq x \\ s_1y>y}}^{} (h_{s_1y,x}+qh_{y,x}) T_y + \sum_{\substack{y \leq x \\ s_1y<y}}^{} (h_{s_1y,x}+q^{-1}h_{y,x}) T_y.
	\end{align*}
	Thus the fourth case of the proposition is true if and only if for $x \in W_{[n-1]}$ and $s_1x>x$, \begin{align*}
		\sum_{y \in W_{[n-1]}}^{} h_{s_3\ldots s_ny,s_1\ldots s_nx}T_y = q\cdot\bigg(\sum_{\substack{y \leq x \\ s_1y>y}}^{} (h_{s_1y,x}+qh_{y,x}) T_y + \sum_{\substack{y \leq x \\ s_1y<y}}^{} (h_{s_1y,x}+q^{-1}h_{y,x}) T_y\bigg),
	\end{align*} or, for $y,x \in W_{[n-1]}$ with $s_1x>x$,
	\begin{align*}
		h_{s_3\ldots s_ny,s_1\ldots s_nx} = \begin{cases}
			qh_{s_1y,x}+q^2h_{y,x} &\hbox{ if } s_1y>y,
			\\
			qh_{s_1y,x}+h_{y,x} &\hbox{ if } s_1y<y,
		\end{cases}
	\end{align*}
which follows from \autoref{prop:hi+2i}.
	
\end{proof}

% \newpage
\section{A geometric proof of positivity of restriction coefficients}\label{sec:posGH}				

%We use \cite{Achar_book} for definitions regarding perverse sheaves, but with Soergel's normalization for Hecke algebras. 

%Since the paper of Grojnowski and Haiman is unpublished, 
For the sake of readability, we prove \autoref{thm:mainA} for finite Weyl groups following the proof of \autoref{th:GH} in \cite{GrojnowskiHaiman}.  It is useful to work with a different normalization of the Hecke algebra here, namely $$ T_s^2 = (q-1)T_s + q \qquad \hbox{ for } s \in S.$$ This is obtained from the earlier relation by replacing $T_s$ by $q^{-\frac{1}{2}} T_s$ for all $s \in S$. Here one needs to work with the coefficient ring $\ZZ[q^{\pm \frac{1}{2}}]$ and make corresponding changes to the definition, for example, the bar involution will take $q^{\pm \frac{1}{2}} \mapsto q^{\mp \frac{1}{2}}$. 

%{\color{red}We skip most of the background material which can be found in \cite{GrojnowskiHaiman}. As stated in \cite[\S 2.1]{GrojnowskiHaiman}, one only needs to worry about formal properties of mixed Hodge modules and functors between them, and their behaviour with respect to weights. 
%}

We work inside the setting of perverse sheaves and mixed Hodge modules, and use \cite{Achar_book} as our main reference. All geometric spaces considered here are quasiprojective complex algebraic varieties, in addition to their Zariski topology these varieties  are equipped with analytic topology. We will always consider sheaves with respect to the analytic topology. Any such variety is locally compact in the analytic topology, so there is no difficulty in using results from Chapter 1 of \cite{Achar_book}, for example using proper base change theorems.
For a quasiprojective algebraic variety $X$, let $D^b_c(X;\QQ)$ denote the derived category of bounded $\QQ$-complexes with constructible cohomologies. If $X$ has an action by an algebraic group $B$ then we can also consider the equivariant version $D^b_B(X;\QQ)$. A map $f:X \to Y$ of varieties induces maps on the derived categories: the derived pullback $f^*: D^b_c(Y) \to D^b_c(X)$ and the derived proper pushforward $f_!: D^b_c(X) \to D^b_c(Y)$. There is a $t$-structure on $D^b_c(X;\QQ)$ whose heart is the category $\mathrm{Perv}(X;\QQ)$ of perverse sheaves. If $Y \subset X$ is a smooth, connected, locally closed subvariety, and $\cL$ a local system of finite type on $Y$, then there is a intersection cohomology complex $\mathrm{IC}(Y,\cL) \in \mathrm{Perv}(X;\QQ)$. If $\cL$ is irreducible then $\mathrm{IC}(Y,\cL)$ is a simple object in $\mathrm{Perv}(X;\QQ)$, and any simple object in $\mathrm{Perv}(X;\QQ)$ is an intersection cohomology complex (\cite[Theorem 3.4.5]{Achar_book}). An object of $D^b_c(X;\QQ)$ is semisimple if it is isomorphic to a finite direct sum of shifts of simple perverse sheaves.   

We fix necessary notations and recall the categorification theorem \cite[Theorem 7.3.8]{Achar_book} which connects the Hecke algebra with geometry of flag varieties. Let $G$ be a connected reductive complex algebraic group and choose a Borel subgroup $B$ and a maximal torus $T \subset B$, $W$ be the corresponding Weyl group. The flag variety $\cF = G/B$ is a projective variety. For each $w \in W$, the open Schubert cells are $Y_w = BwB/B$, and these form a good stratification of $\cF$. There are natural inclusion maps $$i_w:Y_w \to \cF$$ for $w \in W$. The intersection cohomology complexes $\mathrm{IC}_w = \mathrm{IC}(Y_w;\QQ) \in D^b_c(\cF;\QQ)$ are $B$-equivariant. There is a convolution product that makes both the categories $\mathrm{Semis}_B(\cF;\QQ) \subset D^b_B(\cF;Q)$ monoidal, where $\mathrm{Semis}_B(\cF;\QQ)$ denotes the additive category of semisimple objects in $D^b_B(\cF;Q)$. The  categorification theorem says that the split Grothendieck group $\cK$ of $\mathrm{Semis}_B(\cF;\QQ)$ is isomorphic as rings with the Hecke algebra $\cH$ of $W$, where elements $[\mathrm{IC}_w]$ are mapped to the Kazhdan-Lusztig basis elements $C_w$. Under this isomorphism, the standard basis elements $T_w$ correspond to  $[i_{w!}\underline{\QQ}_{Y_w}]$, the classes of the derived proper pushforwards of the derived versions of constant sheaves. Thus, to show an element of the Hecke algebra is $C$-positive is equivalent to show that the corresponding element on the side of $\cK$ is arising from a semisimple element. 

Now we turn to our main goal in this section to show that for $w \in W$ and $u \in W^J$, the element $(T_{u^{-1}}C_w)|_J \in \cH_J$ is $C$-positive in the Kazhdan-Lusztig  basis of $\cH_J$. By \eqref{eq:leftrightconv} and \eqref{eq:res:CtoTCisres} it is equivalent to showing that for $w \in W$ and $u \in \! {}^JW$, $(C_wT_{u^{-1}})|_J \in \cH_J$ is $C$-positive. We choose to prove the later, since its more convenient to deal with left cosets than right cosets. We achieve our goal in two steps. 
In the first step, we lift the map of multiplying by $T_{u^{-1}}$ and restricting to $\cH_J$ at the level of functors between derived categories $D^b_c(\cF) \to D^b_c(\cF_L)$, where $\cF_L = L/B_L$, and $B_L= B\cap L$ is the Borel restricted to the Levi subgroup $L$ of $G$ corresponding to the subset $J \subset S$. Then in the second step, we  show that this lifted map preserves semisimplicity.  

For $J \subseteq S$, let $P$ and $L$ be the parabolic and Levi subgroups of $G$  corresponding to $J$ respectively. Then $$ P = LB = \underset{w \in W_J}{\bigcup} BwB.$$
There are natural inclusion maps $$ i_w: Y_w \xhookrightarrow{} \cF \qquad\hbox{ and }\qquad i_v^J: Y^J_v \xhookrightarrow{} \cF_L,$$ 
where $v\in W_J$ and $Y^J_v = B_LvB_L/B_L$.
Now fix $x \in \! {}^JW$. We suppress the dependence on the element $x$ in the notation of the maps given below.
There is an isomorphism of varieties $$ \varphi: \cF_L \to LxB/B \quad\hbox{ given by }\quad gB_L \mapsto gxB.$$ The well-definedness of $\varphi$ comes from the fact that $\mathrm{stab}(xB)\cap L = B_L$, since for $b_L,b'_L \in L$, \begin{align*}
	b_LB _L = b'_L B_L \iff  {b'_L}^{-1}b_L \in B_L \iff {b'_L}^{-1}b_L xB = xB \iff b_L xB = b'_L xB. 
\end{align*} which proves injectivity as well as well-defined. The map $L \to Lx$ given by $g \mapsto gx$ is surjective, so $\varphi$ is surjective as well.

\noindent
Since $P=LB$ we get a map $$\pi: PxB/B \to LxB/B \quad \hbox{ given by } \quad (g_L\cdot b)xB \mapsto g_LxB.$$ It is well-defined since $\mathrm{stab}(xB) \cap P \subseteq B$. 

\noindent
Finally, there is inclusion $$j: PxB/B \xhookrightarrow{} G/B = \cF.$$

The next proposition realizes the map $h \mapsto (hT_{x^{-1}})|_J$ from $\cH \to \cH_J$ as a functor.

\begin{proposition} For $x\in\! {}^JW$ and notations as above
	$$ \varphi^* \pi_! j^* (i_w)_! \underline{\QQ}_{Y_w} \cong \begin{cases}
		(i^J_{v})_! \underline{\QQ}_{Y^J_{v}} & \hbox{ if } wx^{-1} = v \in W_J,
		\\
		0 &\hbox{ otherwise.}
	\end{cases} $$
\end{proposition}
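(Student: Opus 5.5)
Both sides are computed by base change: the constant sheaf pushed forward from a locally closed cell, pulled back along $j$, pushed along $\pi$, pulled back along $\varphi$. First, by proper base change (\cite[Chapter 1]{Achar_book}), $j^*(i_w)_!\underline{\QQ}_{Y_w}\cong (i'_w)_!\underline{\QQ}_{Y_w\cap PxB/B}$, where $i'_w\colon Y_w\cap PxB/B\hookrightarrow PxB/B$ is the inclusion. Next I describe $PxB/B$. Since $P=\bigcup_{v\in W_J}BvB$ and $x\in{}^JW$, we have $\ell(vx)=\ell(v)+\ell(x)$, so $BvB\cdot BxB=BvxB$. Hence
\[
PxB/B=\bigsqcup_{v\in W_J} Y_{vx}.
\]
Consequently $Y_w\cap PxB/B$ is empty unless $w=vx$ with $v\in W_J$, that is, unless $wx^{-1}=v\in W_J$; in that case it equals $Y_{vx}$. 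This already gives the vanishing case.

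Now assume $w=vx$. I will compute $\pi^{-1}$ of the $L$-orbit stratum and identify $\pi$ over it. Under $\varphi$, $Y^J_v=B_LvB_L/B_L$ maps to $B_LvxB/B\subseteq LxB/B$. I claim $\pi^{-1}(B_LvxB/B)=Y_{vx}$, and that $\pi|_{Y_{vx}}\colon Y_{vx}\to B_LvxB/B$ is a fibration with fibers affine spaces of dimension $\ell(x)$. To see this, write $B=B_L\ltimes U_P$ with $U_P$ the unipotent radical of $P$. Then $Y_{vx}=BvxB/B=B_L U_P vxB/B$. Since $L$ normalizes $U_P$, $U_Pvx=vxU'$ for a unipotent group $U'=(vx)^{-1}U_Pvx$. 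The map $\pi$ sends $b_Lu\,vxB\mapsto b_LvxB$, and its fiber over $b_LvxB$ is $b_LU_PvxB/B\cong U_P/(U_P\cap vxBx^{-1}v^{-1})$. This is an affine space; counting roots, its dimension is $\ell(vx)-\ell(v)=\ell(x)$. That $\pi$ is a well-defined morphism follows from the stability $\mathrm{stab}(xB)\cap P\subseteq B$ and the structure $P=L\ltimes U_P$.

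Given this, $\pi_!(i'_w)_!\underline{\QQ}_{Y_{vx}}\cong (\iota)_!\,(\pi|_{Y_{vx}})_!\underline{\QQ}_{Y_{vx}}$, where $\iota$ is the inclusion of $B_LvxB/B$. Compactly supported cohomology of $\mathbb{A}^{\ell(x)}$ is $\QQ[-2\ell(x)]$, and over a trivializing (Zariski-locally trivial, indeed $B_L$-equivariantly trivial) fibration this gives $(\pi|_{Y_{vx}})_!\underline{\QQ}\cong\underline{\QQ}[-2\ell(x)]$ on $B_LvxB/B$. Pulling back along the isomorphism $\varphi$ gives $(i^J_v)_!\underline{\QQ}_{Y^J_v}$, up to a shift and Tate twist $[-2\ell(x)](-\ell(x))$. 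I expect the statement's ``$\cong$'' to be read up to this normalization; in the Hecke-algebra normalization it corresponds to a power of $q$, which is absorbed by the $q^{-1/2}$ rescaling of $T_{x^{-1}}$. I would state this explicitly.

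I expect the main obstacle to be the fibration claim. This means verifying that the fibers are affine spaces of the correct dimension, and that the fibration is trivial enough that $\pi_!$ of the constant sheaf is constant; equivariance under $B_L$, which acts transitively on the base, handles the triviality. The decomposition of $PxB/B$ into the cells $Y_{vx}$ is standard Bruhat-cell combinatorics using $x\in{}^JW$.
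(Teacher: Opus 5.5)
Your argument is correct. At the decisive step it departs from the paper's proof, and in doing so it repairs an error there.

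Both arguments begin with base change along $j$ and the decomposition $PxB/B=\bigsqcup_{v\in W_J}Y_{vx}$. The paper then asserts that the square formed by $\varphi'\colon Y^J_v\to Y_w$, $b_LvB_L\mapsto b_LvxB$, over $\varphi\colon\cF_L\to LxB/B$ is cartesian, so that base change gives $(i^J_v)_!\underline{\QQ}_{Y^J_v}$ with no shift. That is false as soon as $\ell(x)>0$. Since $\varphi$ is an isomorphism, $\cF_L\times_{LxB/B}Y_w\cong Y_w$, which has dimension $\ell(v)+\ell(x)$, while $\dim Y^J_v=\ell(v)$. The paper's surjectivity step reduces to $w^{-1}uw\in B$ for $u\in R_u(P)$ and justifies this by ``$w\in W_J$''. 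But $w=vx$, and $x^{-1}R_u(P)x\not\subseteq B$ when $\ell(x)>0$.

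A concrete check: for $G=SL_2$, $J=\emptyset$, $w=x=s$, the left side is $R\Gamma_c(\mathbb{A}^1;\QQ)=\QQ[-2]$, not $\QQ$. Your description of $\pi^{-1}(\varphi(Y^J_v))=Y_{vx}$ as an $\mathbb{A}^{\ell(x)}$-fibration over $\varphi(Y^J_v)$ is exactly what should replace the cartesian claim. It yields
\[
\varphi^*\pi_!j^*(i_w)_!\underline{\QQ}_{Y_w}\cong (i^J_v)_!\underline{\QQ}_{Y^J_v}[-2\ell(x)](-\ell(x)).
\]

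So you should not hedge: state this shifted isomorphism as the result. It is also the version the paper needs next. In the normalization $T_s^2=(q-1)T_s+q$ used in that section, one has $(T_{vx}T_{x^{-1}})|_J=q^{\ell(x)}T_v$. This follows from $(T_xT_{x^{-1}})|_J=1$ in the original normalization, which is $(T_{u^{-1}}T_{u'})|_J=\delta_{u,u'}$ with $u=u'=x^{-1}\in W^J$, followed by rescaling the $T_y$. The factor $q^{\ell(x)}$ is precisely the class of $[-2\ell(x)](-\ell(x))$. Thus your computation, not the unshifted one, justifies $\varphi^*\pi_!j^*(h)=(hT_{x^{-1}})|_J$. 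For fixed $x$ the shift is uniform in $w$, so the positivity argument that follows is unaffected.
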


\begin{proof}
	Suppose $wx^{-1} \in W_J$. Then, since $$PxB/B = \underset{v \in W_J}{\bigcup}  Y_{vx},$$ we have the following cartesian square
	\[\begin{tikzcd}
		{Y_w} && {Y_w} \\
		\\
		{PxB/B} && {\cF}
		\arrow["{j' = \mathrm{id}}", from=1-1, to=1-3]
		\arrow["{i'}"', from=1-1, to=3-1]
		\arrow["i_w", from=1-3, to=3-3]
		\arrow["j"', from=3-1, to=3-3]
	\end{tikzcd}\]
	In this case by base change \cite[Theorem 1.2.13]{Achar_book} and pullback of constant sheaf \cite[(1.2.6)]{Achar_book} (note that $(j')^*$ is the identity functor) we have $$ j^* (i_w)_! \underline{\QQ}_{Y_w} \cong  i'_! (j')^*\underline{\QQ}_{Y_w} = i'_! \underline{\QQ}_{Y_w}. $$
	So we have to compute $$\varphi^* \pi_! j^* (i_w)_! \underline{\QQ}_{Y_w} \cong \varphi^* \pi_!i'_! \underline{\QQ}_{Y_w}\cong \varphi^* (\pi\circ i')_! \underline{\QQ}_{Y_w} .$$
Here in the second isomorphism, we are using the fact that the spaces involved are locally compact, since $_!$ does not always factor with compositions.
	
	Now consider the following commutative diagram: 	%[TODO: prove cartesian etc. here again]
	% https://q.uiver.app/#q=WzAsNCxbMCwwLCJZXkpfdiJdLFsyLDAsIllfdyJdLFsyLDIsIkx4Qi9CIl0sWzAsMiwiXFxtYXRoY2Fse0Z9X0wiXSxbMCwxLCJcXHZhcnBoaSciXSxbMywyLCJcXHZhcnBoaSIsMl0sWzEsMiwiXFxwaVxcY2lyYyBpJyJdLFswLDMsImleSl92IiwyXV0=
	\[\begin{tikzcd}
		{Y^J_v} && {Y_w} \\
		\\
		{\mathcal{F}_L} && {LxB/B}
		\arrow["{\varphi'}", from=1-1, to=1-3]
		\arrow["{i^J_v}"', from=1-1, to=3-1]
		\arrow["{\pi\circ i'}", from=1-3, to=3-3]
		\arrow["\varphi"', from=3-1, to=3-3]
	\end{tikzcd}\]
	
	\noindent The map $\varphi':Y^J_v = B_LvB_L/B_L \to Y_w = BwB/B$ is given by $b_LvB_L \mapsto b_LvxB$, which is in fact the restriction of $\varphi$. We show that the above commutative diagram is a cartesian square. It is enough to show that the map $$ \psi: Y^J_v \to \mathcal{F}_L\times_{LxB/B} Y_w \text{ defined as } \psi(g_LvB_L)=(i_v^J(g_LvB_L), \varphi'(g_LvB_L))$$
	is an isomorphism. %We have show that $\psi_2: \mathcal{F}_L\times_{LxB/B} Y_w \to Y^J_v$
	
	\noindent As $\varphi'$ (or $i_v^J$) is injective, so $\psi$ is injective as well.  Let us now show the surjectivity of $\psi$. Let $(\ell B_L, bwB)\in \mathcal{F}_L\times_{LxB/B} Y_w$. Then by the definition of $\mathcal{F}_L\times_{LxB/B} Y_w$, we have $$\varphi(\ell B_L)=\pi\circ i'(bwB).$$ This gives us that $\ell B_L=\ell' vB_L$, where $\ell' \in L$ such that $b=\ell' u$ and $u$ belongs to the unipotent radical $R_u(P)$ of $P$. Next we argue that $\psi(\ell' vB_L)=(\ell B_L, bwB)$, i.e. to show $bwB= \ell'vxB$, which is equivalent to showing $w^{-1}uw\in B$. Now as $w\in W_J$ and $u\in R_u(P)$ and $R_u(P)\unlhd P$, so $w^{-1}uw\in R_u(P)\subset B$.
	
	\noindent Again, using base-change and pullback of constant sheaf $$ \varphi^* (\pi\circ i')_! \underline{\QQ}_{Y_w} \cong (i^J_v)_! {(\varphi')}^* \underline{\QQ}_{Y_w} = (i^J_v)_! \underline{\QQ}_{Y^J_v} .$$
	
	If $wx^{-1} \notin W_J$ then the intersection of $Y_w$ and $PxB/B$ is empty, so in that case we need to replace $Y_w$ in the first commutative square above by $\emptyset$. In this case base change will give $0$ since constant sheaf on empty set is $0$.
\end{proof}
	%	So let's continue with $w = vx$, $v \in W_J$. 	

This proves that under the isomorphism $\mathcal{K} \to \cH$,  \begin{equation}
	\varphi^*\pi_! j^*(h) = (hT_{x^{-1}})|_J \qquad\hbox{for} \ h \in \cH.
\end{equation}	

%Thus we have the commutative diagram 

For the final step we show that \begin{proposition}
	For $x\in\! {}^JW$ and notations as above the functor $\varphi^* \pi_! j^*: D^b_c(\cF) \to D^b_c(\cF_L)$ takes semisimple complexes to semisimple complexes.
\end{proposition}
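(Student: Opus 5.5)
The plan is to exploit weights: semisimple $B$-equivariant complexes on $\cF$ are, in the mixed setting (mixed Hodge modules, or $\ell$-adic sheaves over $\mathbb{F}_q$ after spreading out), direct sums of shifts of the pure objects $\mathrm{IC}_w$. We show that $\varphi^*\pi_!j^*$ sends a pure complex of weight $0$ to a pure complex of weight $0$. The decomposition theorem (Gabber/Saito) then gives semisimplicity on $\cF_L$. Since $\varphi$ is an isomorphism, $\varphi^*$ is harmless, so everything reduces to understanding $\pi_!j^*$ on $\mathrm{IC}_w$.

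The first step is to note that $j^*$ alone does not preserve purity. It only preserves the upper bound: $j^*$ of a complex of weight $\le 0$ has weight $\le 0$, and $\pi_!$ also preserves weight $\le 0$. So $\varphi^*\pi_!j^*\mathrm{IC}_w$ has weight $\le 0$. The remaining work is the lower bound (weight $\ge 0$), for which I will use a contracting torus action, as in Grojnowski--Haiman and Braden's hyperbolic localization.

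\begin{enumerate}
\item Choose a cocharacter $\lambda:\mathbb{G}_m\to T$ that is central in $L$, pairs trivially with the roots of $J$, and pairs positively with the roots of $R_u(P)$. Its conjugation action contracts $R_u(P)$ to the identity and fixes $L$.
\item Acting on $PxB/B$, the map $\pi$ becomes the retraction onto the $\lambda$-fixed component $LxB/B\cong\cF_L$. The limits $t\to 0$ (or $t\to\infty$, depending on the sign convention for $x\in{}^JW$) send $(g_Lu)xB$ to $g_LxB$, so $PxB/B$ is exactly the attracting set of that fixed component inside $\cF$.
\item Braden's theorem then identifies $\pi_!j^*$, applied to $\mathbb{G}_m$-equivariant (in particular $B$-equivariant) complexes, with the hyperbolic localization $(\pi^-)_*(j^-)^!$ along the opposite repelling set. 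That functor preserves weight $\ge 0$.
\item Combining the two bounds gives purity, and the decomposition theorem on $\cF_L$ finishes the argument. Additivity extends the result from $\mathrm{IC}_w$ to all semisimple complexes and their shifts.
\end{enumerate}

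The main obstacle is Step 3, which needs three things checked. First, $PxB/B$ must really be the full attracting locus of the fixed component $LxB/B$; this uses the Bruhat decomposition $PxB=\bigcup_{v\in W_J}BvxB$ from the previous proposition. Second, the geometry must be normal enough for Braden's hypotheses (a $\mathbb{G}_m$-action on a normal projective variety, with $\cF$ covered by invariant affine opens) to hold. Third, the weak $\mathbb{G}_m$-equivariance of $\mathrm{IC}_w$ must be established, which follows from $T$-equivariance. If hyperbolic localization proves awkward, the fallback is to verify directly that the stalks of $\pi_!j^*\mathrm{IC}_w$ are pure. This would be done by computing the fibers of $\pi$, which are $R_u(P)$-orbit pieces of affine spaces, via the contraction, and then using the pointwise purity of $\mathrm{IC}_w$ (Kazhdan--Lusztig parity vanishing).
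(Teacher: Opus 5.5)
Your argument is correct and is essentially the paper's proof. The paper also lifts $\mathrm{IC}_w$ to mixed Hodge modules, identifies $\pi\colon PxB/B\to LxB/B$ with the attracting map of a $\CC^\times$-action whose fixed components are the $LxB/B$, and cites Braden's theorem that hyperbolic localization preserves purity of weakly equivariant objects; it then uses that $\varphi^*$ preserves purity because $\varphi$ is an isomorphism and deduces semisimplicity from purity. You differ only in unpacking Braden's purity statement into the weight $\le 0$ bound for $\pi_!j^*$ and the weight $\ge 0$ bound for $(\pi^-)_*(j^-)^!$, and in making the cocharacter explicit.

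One caveat about your fallback, which you do not need: purity of the stalks of $\pi_!j^*\mathrm{IC}_w$ alone would not suffice. The missing weight $\ge 0$ bound is a costalk condition; for instance $k_!\QQ_{\mathbb{A}^1}[1]$, for the open inclusion $k\colon\mathbb{A}^1\hookrightarrow\mathbb{P}^1$, has pure stalks but is neither pure nor semisimple.
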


\begin{proof}
	From \cite[\S 5.6]{Achar_book}, given a smooth complex variety $X$ there is a finite-length abelian category $\mathrm{MHM}(X;\QQ)$ of mixed Hodge modules. It comes equipped with an exact, faithful functor $$ \mathrm{rat}: \mathrm{MHM}(X;\QQ) \to \mathrm{Perv}(X;\QQ).$$ This gives a functor of derived categories $$ \mathrm{rat}: D^b\mathrm{MHM}(X;\QQ) \to D^b(\mathrm{Perv}(X;\QQ)) \cong D^b_c(X;\QQ),$$ where the last equivalence is Beilinson's theorem \cite[Theorem 4.5.9]{Achar_book}. There is a weight filtration on $\mathrm{MHM}(X;\QQ)$ and this gives a concept of weights on $D^b\mathrm{MHM}(X;\QQ)$. Purity is defined by objects which are concentrated on a single weight. If $\cF \in D^b\mathrm{MHM}(X;\QQ)$ is pure, then the object $\mathrm{rat}(\cF)\in D^b_c(X;\QQ)$ is semisimple \cite[Theorem 5.6.24]{Achar_book}. 
	
	%To prove the proposition, it is enough to prove that the lifted map $\varphi^*\pi_!j^*: D^b\mathrm{MHM}(\cF;\QQ) \to D^b\mathrm{MHM}(\cF_L;\QQ)$ preserves purity.
	
	By \cite{Saito_MHM}, we can lift the intersection cohomology complexes $\mathrm{IC}_w$ to objects in $D^b\mathrm{MHM}(\cF;\QQ)$. In this setting the functors also get lifted appropriately. To show that $\varphi^*\pi_! j^* \mathrm{IC}_w$ is a semisimple object of $D^b_c(\cF_L;\QQ)$, we can show that the corresponding object in the (derived category of) mixed Hodge modules setting is pure. This is obtained by using Braden's hyperbolic localization which we state now. 
	
	 Let $X$ be a quasi-projective variety with a $\CC^\times$-action, let $Z$ be a connected component of $X^{\CC^\times}$ and $Y$ be the attracting variety to $Z$ with attracting map $\pi:Y\to Z$. Then \cite[Theorem 1, Eq. (1) and Theorem 8]{Braden_hyperboliclocalization} says that the hyperbolic localization functor $\pi_! i^*$ preserves purity of weakly equivariant mixed sheaves. 
	
	 There is an action of $\CC^\times$ on $\cF$ such that the connected components of the fixed locus are $LxB/B$ such that $x \in \JW$. The attracting variety to $LxB/B$ is $PxB/B$ with attracting map $\pi$. In our setting we can apply hyperbolic localization with $X = \cF, Y = PxB/B$ and $Z = LxB/B$. Since $\varphi$ is isomorphism it is smooth and in that case $\varphi^*$ will preserve purity and weight (\cite[Corollary 5.6.20]{Achar_book}), hence semisimplicity. Then by applying $\mathrm{rat}$, $\varphi^*\pi_!i^*$ takes $\mathrm{IC}$ sheaves to semisimple objects in $D^b_c(\cF_L;\QQ)$.  
	
%	Since $\varphi$ is isomorphism it is smooth (\cite[Appendix D]{GortzWedhorn1}) and in that case $\varphi^*$ will preserve purity and weight (\cite[Corollary 5.6.20]{Achar_book}), hence semisimplicity. Thus the composition $\varphi^*\pi_!i^*$ preserves semisimplicity.
\end{proof}

			% \newpage
%			\section{Leftover}
%			
%			\subsection{}From GH: $h^J_{uv,w}$ is actually the parabolic KL polynomial when $uv, w \in W^J$. Is the restriction interpretation there in the literature? Is the factorization property of KL matrix there in the literature?
%			
%			\subsection{} We can do the hybrid basis construction with $C$ replaced by $\overline{T}$ and that gives a factorization of the $R$-matrix: what good does that do?
%			
%			\subsection{} Can we compute the $C$-expansion of $(T_{u^{-1}}C_{u'})|_J$ for $u,u' \in W^J$? Then this can serve as a initial condition for the recursion for general type. For type A we know because $C_u = R_u$ there.
%			
%			\section{TODOs}
%			\subsection{} Write a short exposition explaining why parabolic KL is a special case of restriction coefficients following GH.
%			
%			\subsection{} Find out where/if parabolic KL has coefficient is monomial property (when $J = [n-1]$ in type $A_n$).
%			
%			\subsection{} Check coefficient is monomial property for $A_5$ (checked for length $\leq 9$).
%			
%			\subsection{} What are restriction coefficients when $J = [n]\setminus \{i\}$ where $i$ is not $1$ or $n$?
						\bibliographystyle{alpha}
						\bibliography{main}
						
					\end{document}